\documentclass{amsart}

\usepackage{hyperref}
\usepackage{amsthm}
\usepackage{amsmath}
\usepackage{amssymb}
\usepackage{color}
\usepackage{graphicx}
\usepackage{url}

\newtheorem{theorem}{Theorem}[section]
\newtheorem{proposition}[theorem]{Proposition}
\newtheorem{corollary}[theorem]{Corollary}
\newtheorem{lemma}[theorem]{Lemma}
\newtheorem{conjecture}[theorem]{Conjecture}

\theoremstyle{definition}
\newtheorem{definition}[theorem]{Definition}
\newtheorem{example}[theorem]{Example}

\DeclareMathOperator{\supp}{supp}
\DeclareMathOperator{\pf}{Pf}
\DeclareMathOperator{\Dr}{Dr}
\DeclareMathOperator{\DDr}{\Delta Dr}

\DeclareMathOperator{\convex}{convex}
\DeclareMathOperator{\vertices}{vertices}
\DeclareMathOperator{\tconvex}{tconvex}
\DeclareMathOperator{\J}{\textbf{2n}}
\DeclareMathOperator{\Spin}{Spin}
\DeclareMathOperator{\TSpin}{TSpin}
\DeclareMathOperator{\sg}{sg}
\DeclareMathOperator{\val}{val}
\DeclareMathOperator{\trop}{trop}

\newcommand{\D}{\mathcal{D}}
\newcommand{\Q}{\mathcal{Q}}
\newcommand{\R}{\mathbb{R}}
\newcommand{\Proj}{\mathbb{P}}
\newcommand{\T}{\mathbb{T}}
\newcommand{\Trop}{\mathcal{T}}
\newcommand{\C}{\mathcal{C}}
\newcommand{\PC}{\mathcal{PC}}
\newcommand{\B}{\mathcal{B}}
\newcommand{\K}{\mathcal{K}}
\newcommand{\puiseux}{\mathbb{C}\{\!\{t\}\!\}}

\newcommand{\sn}{\mathcal{P}(n)}
\newcommand{\V}{\mathcal{V}}

\title{\textsf{Isotropical Linear Spaces and Valuated Delta-Matroids}}
\author{\textsf{Felipe Rinc\'on}}

\begin{document}

\begin{abstract}
The spinor variety is cut out by the quadratic Wick relations among the principal Pfaffians of an $n \times n$ skew-symmetric matrix. Its points correspond to $n$-dimensional isotropic subspaces of a $2n$-dimensional vector space. In this paper we tropicalize this picture, and we develop a combinatorial theory of tropical Wick vectors and tropical linear spaces that are tropically isotropic. We characterize tropical Wick vectors in terms of subdivisions of $\Delta$-matroid polytopes, and we examine to what extent the Wick relations form a tropical basis. Our theory generalizes several results for tropical linear spaces and valuated matroids to the class of Coxeter matroids of type $D$. 
\end{abstract}

\maketitle

\section{Introduction} 

Let $n$ be a positive integer, and let $V$ be a $2n$-dimensional vector space over an algebraically closed field $K$ of characteristic $0$. Fix a basis $e_1, e_2, \dotsc, e_n, e_{1^*}$, $e_{2^*}, \dotsc, e_{n^*}$ for $V$, and consider the symmetric bilinear form on $V$ defined as 
\[
Q(x,y) = \sum_{i=1}^n x_i \, y_{i^*} + \sum_{i=1}^n x_{i^*} \, y_i,
\]
for any two $x, y \in V$ with coordinates
\[
x=(x_1, \dotsc, x_n ,  x_{1^*}, \dotsc, x_{n^*}) \text{ and } y=(y_1, \dotsc, y_n ,  y_{1^*}, \dotsc, y_{n^*}).
\]
An $n$-dimensional subspace $U \subseteq V$ is called (totally) \textbf{isotropic} if for all $u,v \in U$ we have $Q(u,v)=0$, or equivalently, for all $u \in U$ we have $Q(u,u)=0$. Denote by $\sn$ the collection of subsets of the set $[n]:=\{1,2, \dotsc, n\}$. The space of pure spinors $\Spin^{\pm}(n)$ is an algebraic set in projective space $\mathbb{P}^{\sn - 1}$ that parametrizes totally isotropic subspaces of $V$. Its defining ideal is generated by very special quadratic equations, known as Wick relations. We will discuss these relations in Section \ref{secspinor}. Since any linear subspace $W \subseteq K^n$ defines an isotropic subspace $U := W \times W^\perp \subseteq K^{2n}$, all Grassmannians $G(k,n)$ can be embedded naturally into the space of pure spinors, and in fact, Wick relations can be seen as a natural generalization of Pl\"ucker relations. 

In \cite{speyer}, Speyer studied tropical Pl\"ucker relations, tropical Pl\"ucker vectors (or valuated matroids \cite{valuated}), and their relation with tropical linear spaces. In his study he showed that these objects have a beautiful combinatorial structure, which is closely related to matroid polytope decompositions. In this paper we will study the tropical variety and prevariety defined by all Wick relations, the combinatorics satisfied by the vectors in these spaces (valuated $\Delta$-matroids \cite{dwvaldelta}), and their connection with tropical linear spaces that are tropically isotropic (which we will call isotropical linear spaces). Much of our work can be seen as a generalization to type $D$ of some of the results obtained by Speyer, or as a generalization of the theory of $\Delta$-matroids to the ``valuated'' setup.

A brief introduction to tropical geometry and the notions that we discuss in this paper will be given in Section \ref{sectropwick}. In that section we will also be interested in determining for what values of $n$ the Wick relations form a tropical basis. We will provide an answer for all $n \neq 6$:

\vspace{2mm}
\noindent \textbf{Theorem \ref{tropbasis}.} \emph{If $n \leq 5$ then the Wick relations are a tropical basis; if $n \geq 7$ then they are not.}
\vspace{2mm}

We conjecture that, in fact, for all $n \leq 6$ the Wick relations are a tropical basis.

We will say that a vector $p \in \T^{\sn}$ with coordinates in the tropical semiring $\T := \R \cup \{\infty\}$ is a tropical Wick vector if it satisfies the tropical Wick relations. A central object for our study of tropical Wick vectors will be that of an even $\Delta$-matroid \cite{bouchet}. Even $\Delta$-matroids are a natural generalization of classical matroids, and much of the theory of matroids can be extended to them. In particular, their associated polytopes are precisely those $0/1$ polytopes whose edges have the form $\pm e_i \pm e_j$, with $i \neq j$. In this sense, even $\Delta$-matroids can be seen as Coxeter matroids of type $D$, while classical matroids correspond to Coxeter matroids of type $A$. We will present all the necessary background on even $\Delta$-matroids in Section \ref{secdelta}. Tropical Wick vectors will be valuated $\Delta$-matroids: real functions on the set of bases of an even $\Delta$-matroid satisfying certain ``valuated exchange property'' which is amenable to the greedy algorithm (see \cite{dwvaldelta}). We will prove in Section \ref{secsubdiv} that in fact tropical Wick vectors can be characterized in terms of even $\Delta$-matroid polytope subdivisions:

\vspace{2mm}
\noindent \textbf{Theorem \ref{wickmatroid}.} \emph{The vector $p \in \T^{\sn}$ is a tropical Wick vector if and only if the regular subdivision induced by $p$ is a subdivision of an even $\Delta$-matroid polytope into even $\Delta$-matroid polytopes.}
\vspace{2mm}

We give a complete list of all even $\Delta$-matroids up to isomorphism on a ground set of at most 5 elements, together with their corresponding spaces of valuations, in the website \url{http://math.berkeley.edu/~felipe/delta/} .

In Section \ref{seccocycle} we will extend some of the theory of even $\Delta$-matroids to the valuated setup. We say that a vector $x= (x_1, x_2, \dotsc, x_n, x_{1^*}, x_{2^*}, \dotsc , x_{n^*}) \in \T^{\J}$ with coordinates in the tropical semiring $\T := \R \cup \{\infty\}$ is admissible if for all $i$ we have that at most one of $x_i$ and $x_{i^*}$ is not equal to $\infty$. Based on this notion of admissibility we will define duality, circuits, and cycles for a tropical Wick vector $p$, generalizing the corresponding definitions for even $\Delta$-matroids. We will be mostly interested in studying the cocycle space of a tropical Wick vector, which can be seen as an analog in type $D$ to the tropical linear space associated to a tropical Pl\"ucker vector. We will study some of its properties, and in particular, we will give a parametric description of it in terms of cocircuits:

\vspace{2mm}
\noindent \textbf{Theorem \ref{admhull}.} \emph{The cocycle space $\Q(p) \subseteq \T^{\J}$ of a tropical Wick vector $p \in \T^{\sn}$ is equal to the set of admissible vectors in the tropical convex hull of the cocircuits of $p$.}
\vspace{2mm}

We will then specialize our results to tropical Pl\"ucker vectors, unifying in this way several results for tropical linear spaces given by Murota and Tamura \cite{murotatamura}, Speyer \cite{speyer}, and Ardila and Klivans \cite{ardila}.

In Section \ref{secisotropical} we will define isotropical linear spaces and study their relation with tropical Wick vectors. We will give an effective characterization in Theorem \ref{isotropical} for determining when a tropical linear space is isotropical, in terms of its associated Pl\"ucker vector. We will also show that the correspondence between isotropic linear spaces and points in the pure spinor space is lost after tropicalizing; nonetheless, we will prove that this correspondence still holds when we restrict our attention only to admissible vectors:

\vspace{2mm}
\noindent \textbf{Theorem \ref{isoadm}.} \emph{Let $K = \puiseux$ be the field of Puiseux series. Let $U \subseteq K^{\J}$ be an isotropic subspace, and let $w$ be its corresponding point in the space of pure spinors $\Spin^{\pm}(n)$. Suppose $p \in \T^{\sn}$ is the tropical Wick vector obtained as the valuation of $w$. Then the set of admissible vectors in the tropicalization of $U$ is the cocycle space $\Q(p) \subseteq \T^{\J}$ of $p$.}
%\vspace{2mm}

\section{Isotropic Linear Spaces and Spinor Varieties}\label{secspinor}

Let $n$ be a positive integer, and let $V$ be a $2n$-dimensional vector space over an algebraically closed field $K$ of characteristic $0$, with a fixed basis $e_1, e_2, \dotsc, e_n, e_{1^*}$, $e_{2^*}, \dotsc, e_{n^*}$. Denote by $\sn$ the collection of subsets of the set $[n]:=\{1,2, \dotsc, n\}$. In order to simplify the notation, if $S \in \sn$ and $a \in [n]$ we will write $Sa$, $S-a$, and $S \Delta a$ instead of $S \cup \{a\}$, $S \setminus \{a\}$, and $S \Delta \{a\}$, respectively. Given an $n$-dimensional isotropic subspace $U \subseteq V$, one can associate to it a vector $w \in \Proj^{\sn -1}$ of Wick coordinates as follows. Write $U$ as the rowspace of some $n \times 2n$ matrix $M$ with entries in $K$. If the first $n$ columns of $M$ are linearly independent, we can row reduce the matrix $M$ and assume that it has the form $M = [ I | A ]$, where $I$ is the identity matrix of size $n$ and $A$ is an $n \times n$ matrix. The fact that $U$ is isotropic is equivalent to the property that the matrix $A$ is skew-symmetric. The vector $w \in \Proj^{\sn -1}$ is then defined as
\[
w_{[n] \setminus S}:=
\begin{cases}
\pf(A_S) & \text{if } |S| \text{ is even}, \\
0 & \text{if } |S| \text{ is odd}; 
\end{cases}
\]
where $S \in \sn$ and $\pf(A_S)$ denotes the Pfaffian of the principal submatrix $A_S$ of $A$ whose rows and columns are indexed by the elements of $S$. If the first $n$ columns of $M$ are linearly dependent then we proceed in a similar way but working over a different affine chart of $\Proj^{\sn -1}$. In this case, we can first reorder the elements of our basis (and thus the columns of $M$) using a permutation of $\J:=\{1,2, \dotsc, n, 1^*, 2^*, \dotsc , n^* \}$ consisting of transpositions of the form $(j,j^*)$ for all $j$ in some index set $J \subseteq [n]$, so that we get a new matrix that can be row-reduced to a matrix of the form $M'= [ I | A ]$ (with $A$ skew-symmetric). We then compute the Wick coordinates as
\[
w_{[n]\setminus S}:=
\begin{cases}
(-1)^{\sg(S,J)} \cdot \pf(A_{S \Delta J}) & \text{if } |S \Delta J| \text{ is even}, \\
0 & \text{if } |S \Delta J| \text{ is odd};
\end{cases}
\]
where $(-1)^{\sg(S,J)}$ is some sign depending on $S$ and $J$ that will not be important for us. The vector $w \in \Proj^{\sn -1}$ of Wick coordinates depends only on the subspace $U$, and the subspace $U$ can be recovered from its vector $w$ of Wick coordinates as
\begin{equation}\label{isowick}
U = \bigcap_{T \subseteq [n]} \left\{ x \in V: \sum_{i \in T} (-1)^{\sg(i,T)} \, w_{T-i} \cdot x_i \, + \, \sum_{j \notin T} (-1)^{\sg(j,T)} \, w_{Tj} \cdot x_{j^*} \right\},
\end{equation}
where again the signs $(-1)^{\sg(i,T)}$ and $(-1)^{\sg(j,T)}$ will not matter for us. The following example might help make things clear.
\begin{example}
Take $n=4$, and let $U$ be the isotropic subspace of $\mathbb{C}^{\textbf{8}}$ given as the rowspace of the matrix
\[
M = 
\bordermatrix{
& \textbf{1} & \textbf{2} & \textbf{3} & \textbf{4} & \textbf{1}^* & \textbf{2}^* & \textbf{3}^* & \textbf{4}^* \cr
& 1 & 0 & -1 & 0 & 0 & 1 & 0 & 2 \cr
& 0 & 1 & 3 & 0 & -1 & 0 & 0 & 0 \cr
& 0 & 0 & 0 & 0 & 1 & -3 & 1 & -5 \cr
& 0 & 0 & 5 & 1 & -2 & 0 & 0 & 0
}.
\]
Since the first four columns of $M$ are linearly dependent, in order to find the Wick coordinates of $U$ we first swap columns $3$ and $3^*$ (so $J$ will be equal to $\{3\}$), getting the matrix
\[
M' = 
\begin{pmatrix}
1 & 0 & 0 & 0 & \, 0 & 1 & -1 & 2 \cr
0 & 1 & 0 & 0 & \, -1 & 0 & 3 & 0 \cr
0 & 0 & 1 & 0 & \,  1 & -3 & 0 & -5 \cr
0 & 0 & 0 & 1 & \, -2 & 0 & 5 & 0
\end{pmatrix}.
\]
Some Wick coordinates of $U$ are then
\begin{align*}
\left| w_{134} \right| &= \pf(A_{ 2 \Delta 3 }) = \pf(A_{23}) = 3, \\
\left| w_{3} \right| &= \pf(A_{ 124 \Delta 3 }) = \pf(A_{1234}) = 1 \cdot (-5) - (-1) \cdot 0 + 2 \cdot 3 = 1, \\
\left| w_{24} \right| &= 0.
\end{align*}
\end{example}

The \textbf{space of pure spinors} is the set $\Spin^{\pm}(n) \subseteq \Proj^{\sn -1}$ of Wick coordinates of all $n$-dimensional isotropic subspaces of $V$, and thus it is a parameter space for these subspaces. It is an algebraic set, and it decomposes into two isomorphic irreducible varieties as $\Spin^{\pm}(n) = \Spin^+(n) \sqcup \Spin^-(n)$, where $\Spin^+(n)$ consists of all Wick coordinates $w$ whose \textbf{support} $\supp(w):= \{ S \in \sn : w_S \neq 0 \}$ is made of even-sized subsets, and $\Spin^-(n)$ consists of all Wick coordinates whose support is made of odd-sized subsets. The irreducible variety $\Spin^+(n)$ is called the \textbf{spinor variety}; it is the projective closure of the image of the map sending an $n \times n$ skew-symmetric matrix to its vector of Pfaffians. Its defining ideal consists of all polynomial relations among the Pfaffians of a skew-symmetric matrix, and it is generated by the following quadratic relations:
\begin{equation}\label{wick}
\sum_{i=1}^s (-1)^i \, w_{\tau_i \sigma_1 \sigma_2 \dotsb \sigma_r} \cdot w_{\tau_1 \tau_2 \dotsb \hat{\tau_i} \dotsb \tau_s} +
\sum_{j=1}^r (-1)^j \, w_{\sigma_1 \sigma_2 \dotsb \hat{\sigma_j} \dotsb \sigma_r} \cdot w_{\sigma_j \tau_1 \tau_2 \dotsb \tau_s},
\end{equation}
where $\sigma, \tau \in \sn$ have odd cardinalities $r,s$, respectively, and the variables $w_\sigma$ are understood to be alternating with respect to a reordering of the indices, e.g. $w_{2134} = -w_{1234}$ and $w_{1135} = 0$. The ideal defining the space of pure spinors is generated by all quadratic relations having the form \eqref{wick}, but now with $\sigma, \tau \in \sn$ having any cardinality. These relations are known as \textbf{Wick relations}. The shortest nontrivial Wick relations are obtained when $| \sigma \Delta \tau | = 4$, in which case they have the form
\[
w_{Sabcd} \cdot w_{S} - w_{Sab} \cdot w_{Scd} + w_{Sac}\cdot w_{Sbd} - w_{Sad} \cdot w_{Sbc}  
\]
and
\[
w_{Sabc} \cdot w_{Sd} - w_{Sabd} \cdot w_{Sc} + w_{Sacd} \cdot w_{Sb} - w_{Sbcd} \cdot p_{Sa},
\]
where $S \subseteq [n]$ and $a,b,c,d \in [n] \setminus S$ are distinct. These relations will be of special importance for us; they will be called the \textbf{4-term Wick relations}. For more information about spinor varieties and isotropic linear spaces we refer the reader to \cite{manivel, procesi, velasco}.

It is not hard to check that if $W$ is any linear subspace of $K^n$ then $U := W \times W^{\perp}$ is an $n$-dimensional isotropic subspace of $K^{\J}$ whose Wick coordinates are the Pl\"ucker coordinates of $W$, so Wick vectors and Wick relations can be thought as a generalization of Pl\"ucker vectors and Pl\"ucker relations. When studying linear subspaces of a vector space and their corresponding Pl\"ucker coordinates, it is natural to investigate what are all possible supports of such Pl\"ucker vectors. This leads immediately to the notion of (realizable) matroids. In our case, the study of all possible supports of Wick vectors leads us directly to the notion of Delta-matroids (or $\Delta$-matroids), which generalize classical matroids.

\section{Delta-Matroids}\label{secdelta}

In this section we review some of the basic theory of $\Delta$-matroids and even $\Delta$-matroids. These generalize matroids in a very natural way, and have also the useful feature of being characterized by many different sets of axioms. For a much more extensive exposition of matroids and $\Delta$-matroids, the reader can consult \cite{oxley, bouchet, coxeter, multi1, multi2}.

\subsection{Bases}

Our first description of $\Delta$-matroids is the following.
\begin{definition}\label{defdelta}
A \textbf{$\Delta$-matroid} (or \textbf{Delta-matroid}) is a pair $M = (E, \B)$, where $E$ is a finite set and $\B$ is a nonempty collection of subsets of $E$ satisfying the following \textbf{symmetric exchange axiom}:
\begin{itemize}
\item For all $A, B \in \B$ and for all $a \in A \Delta B$, there exists $b \in A \Delta B$ such that $A \Delta \{a,b\} \in \B$.
\end{itemize}
Here $\Delta$ denotes symmetric difference: $X \Delta Y = (X \setminus Y) \cup (Y \setminus X)$. The set $E$ is called the \textbf{ground set} of $M$, and $\B$ is called the collection of \textbf{bases} of $M$. We also say that $M$ is a $\Delta$-matroid over the set $E$. In this paper we will usually work with $\Delta$-matroids over the set $[n]$.
\end{definition}

Delta-matroids are a natural generalization of classical matroids; in fact, it is easy to see that matroids are precisely those $\Delta$-matroids whose bases have all the same cardinality (the reader not familiar with matroids can take this as a definition).

Delta-matroids are a special class of Coxeter matroids, and they have appeared in the literature under many other names like: Lagrangian matroids, symmetric matroids, 2-matroids or metroids (see \cite{coxeter} for more information). The name $\Delta$-matroid is meant to emphasize the analogy in Definition \ref{defdelta} with the exchange axiom for classical matroids. 

It is important to note that the exchange axiom for $\Delta$-matroids does not require the elements $b$ and $a$ to be distinct. Doing so leads us to the more specific notion of even $\Delta$-matroid, which will play a central role in the rest of this paper.

\begin{definition}
An \textbf{even $\Delta$-matroid} (or \textbf{even Delta-matroid}) is a $\Delta$-matroid $M = (E, \B)$ satisfying the following stronger exchange axiom:
\begin{itemize}
\item For all $A, B \in \B$ and for all $a \in A \Delta B$, there exists $b \in A \Delta B$ such that $b \neq a$ and $A \Delta \{a,b\} \in \B$.
\end{itemize}
\end{definition}
An even $\Delta$-matroid is called a Lagrangian orthogonal matroid in \cite{coxeter}.

The following proposition follows easily from the definitions, and it motivates the terminology we use.
\begin{proposition}\label{parity}
Let $M$ be a $\Delta$-matroid. Then $M$ is an even $\Delta$-matroid if and only if all the bases of $M$ have the same parity.
\end{proposition}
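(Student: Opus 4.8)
The plan is to prove both directions using the defining exchange axioms and the basic observation that symmetric difference with a two-element set $\{a,b\}$ changes cardinality by $0$ if $a \neq b$, and by $\pm 1$ if $a = b$ (wait—$a=b$ means we remove or add a single element, changing cardinality by $1$; if $a \neq b$ either both are added, both removed, or one added and one removed, so the cardinality changes by $-2$, $+2$, or $0$, i.e.\ it stays the same parity). This parity bookkeeping is the engine of the whole argument.

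First I would prove the ``if'' direction. Suppose all bases of $M$ have the same parity; I want to verify the even exchange axiom. Fix $A, B \in \B$ and $a \in A \Delta B$. By the (ordinary) $\Delta$-matroid exchange axiom there is some $b \in A \Delta B$ with $A \Delta \{a,b\} \in \B$. I must rule out $b = a$. If $b = a$, then $A \Delta \{a,b\} = A \Delta a$, which differs in cardinality from $A$ by exactly $1$, hence has opposite parity to $A$; since $A \Delta a \in \B$ and $A \in \B$, this contradicts the assumption that all bases have the same parity. Therefore $b \neq a$, and the even exchange axiom holds.

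Next, the ``only if'' direction: suppose $M$ is an even $\Delta$-matroid; I want to show all bases have the same parity. Take any $A, B \in \B$; I will show $|A| \equiv |B| \pmod 2$ by induction on $|A \Delta B|$. If $A \Delta B = \emptyset$ then $A = B$ and we are done. Otherwise pick $a \in A \Delta B$; by the even exchange axiom there is $b \in A \Delta B$ with $b \neq a$ and $A' := A \Delta \{a,b\} \in \B$. Since $b \neq a$, $|A'| \equiv |A| \pmod 2$. Moreover both $a$ and $b$ lie in $A \Delta B$, so one checks $A' \Delta B = (A \Delta B) \setminus \{a, b\}$ has strictly smaller cardinality than $A \Delta B$ (this is a short set-theoretic check: since $a, b \in A\Delta B$, toggling them in $A$ removes them from the symmetric difference with $B$). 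By the inductive hypothesis $|A'| \equiv |B| \pmod 2$, and combining gives $|A| \equiv |B| \pmod 2$.

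The only step requiring a moment's care is the claim $A' \Delta B = (A \Delta B) \setminus \{a,b\}$ when $a,b \in A \Delta B$: an element $x \notin \{a,b\}$ lies in $A' \Delta B$ iff it lies in $A \Delta B$ (since $A'$ and $A$ agree off $\{a,b\}$), while $a$ (and symmetrically $b$), being in $A \Delta B$, is flipped out of the symmetric difference by the operation $A \mapsto A \Delta\{a,b\}$. This is the main ``obstacle,'' though it is entirely routine; everything else is immediate from the axioms and parity counting.
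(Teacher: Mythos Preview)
Your proof is correct. The paper does not actually supply a proof of this proposition; it merely asserts that it ``follows easily from the definitions,'' and your argument is precisely the straightforward verification one would expect to fill in that gap.
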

It should be mentioned that the bases of an even $\Delta$-matroid can all have odd cardinality; unfortunately, the name used for even $\Delta$-matroids might be a little misleading.

The notion of duality for matroids generalizes naturally to $\Delta$-matroids.
\begin{definition}
Let $M = (E, \B)$ be an (even) $\Delta$-matroid. Directly from the definition it follows that the collection
\[
\B^*:=\{ E \setminus B : B \in \B \}
\]
is also the collection of bases of an (even) $\Delta$-matroid $M^*$ over $E$. We will refer to $M^*$ as the \textbf{dual} (even) $\Delta$-matroid to $M$.
\end{definition} 

The somewhat simple exchange axiom defining even $\Delta$-matroids implies the following much stronger exchange axiom (see \cite{coxeter}).
\begin{proposition}\label{strong}
Let $M$ be an even $\Delta$-matroid. Then $M$ satisfies the following \textbf{strong exchange axiom}:
\begin{itemize}
\item For all $A, B \in \B$ and for any $a \in A \Delta B$, there exists $b \in A \Delta B$ such that $b \neq a$ and both $A \Delta \{a,b\}$ and $B \Delta \{a,b\}$ are in $\B$.
\end{itemize}
\end{proposition}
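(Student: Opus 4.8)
The plan is to prove this by induction on $d:=|A\Delta B|$, which is even by Proposition~\ref{parity}. There is nothing to prove when $d=0$, and when $d=2$ we may write $A\Delta B=\{a,b\}$ with $b\neq a$, and then $A\Delta\{a,b\}=B\in\B$ and $B\Delta\{a,b\}=A\in\B$, so $b$ works. Note also that the even exchange axiom, applied separately to the ordered pairs $(A,B)$ and $(B,A)$, already yields elements $b_1,b_2\in (A\Delta B)\setminus\{a\}$ with $A\Delta\{a,b_1\}\in\B$ and $B\Delta\{a,b_2\}\in\B$; the whole point of the proposition is to show that these two one-sided exchanges can be chosen to use a \emph{common} second element. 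The statement is symmetric in $A,B$ and, since $(E\setminus A)\Delta(E\setminus B)=A\Delta B$ and $(E\setminus A)\Delta\{a,b\}=E\setminus(A\Delta\{a,b\})$, it is also self-dual under $M\mapsto M^{*}$.

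For the inductive step I would first put the pair into a normal form. Twisting $M$ by a fixed subset $T$ (replacing $\B$ by $\{B\Delta T:B\in\B\}$) preserves the even exchange axiom and transports the strong exchange statement through unchanged, so we may twist by $A$ and assume $A=\emptyset$; then $B=A\Delta B=:D$. Moreover $(D,\{S\subseteq D:S\in\B\})$ is again an even $\Delta$-matroid, because the even exchange axiom for $M$ applied to two subsets of $D$ never produces a set outside $D$, and strong exchange for this restricted matroid at $(\emptyset,D,a)$ immediately gives strong exchange for $M$ at $(A,B,a)$. So we may assume $E=A\Delta B$, $A=\emptyset$, $B=E$, with $|E|=d\geq4$; the goal becomes: for every $a\in E$ there is $b\neq a$ with $\{a,b\}\in\B$ and $E\setminus\{a,b\}\in\B$.

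Now the even exchange axiom applied to $(\emptyset,E)$ and to $(E,\emptyset)$ with pivot $a$ gives $b_1\neq a$ with $\{a,b_1\}\in\B$ and $b_2\neq a$ with $E\setminus\{a,b_2\}\in\B$; if $b_1=b_2$ we are done, so assume $b_1\neq b_2$. The bases $X:=\{a,b_1\}$ and $Y:=E\setminus\{a,b_2\}$ satisfy $X\Delta Y=E\setminus\{b_1,b_2\}$, which contains $a$ and has size $d-2$, so the inductive hypothesis applies to $(X,Y,a)$. The main obstacle is precisely here: a single exchange step deletes the pivot $a$ from the symmetric difference on which it acts, so applying the induction naively returns facts about bases whose pairwise symmetric differences no longer record $a$ (one gets, e.g., $\{b_1,b_3\},\,E\setminus\{b_2,b_3\}\in\B$ rather than something of the form $\{a,\ast\},\,E\setminus\{a,\ast\}$). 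The technical heart of the argument is to organize the bookkeeping — performing exchanges from both $\emptyset$ and $E$, and reinserting $a$ at the right moments — so that the two recovered one-sided exchanges are forced to share a common second element; this is the step I expect to be delicate and somewhat lengthy.

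A cleaner alternative is geometric. Even $\Delta$-matroids are exactly the Coxeter matroids of type $D$, so by the Gelfand--Serganova theorem every edge of the polytope $P(M)=\mathrm{conv}\{\sum_{i\in B}e_i:B\in\B\}\subseteq\R^{E}$ is parallel to a root $\pm e_i\pm e_j$ with $i\neq j$. From this the strong exchange axiom follows by tracing a path along the $1$-skeleton of $P(M)$ from $e_A$ towards $e_B$ whose first edge flips coordinate $a$. Since this section is expository, in practice I would simply invoke this characterization and refer to \cite{coxeter} for the details, rather than reproving it combinatorially from scratch.
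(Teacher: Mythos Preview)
The paper does not actually prove this proposition; it states it and refers the reader to \cite{coxeter}. So your fallback suggestion at the end --- invoke the Gelfand--Serganova characterization of Coxeter matroids and cite \cite{coxeter} --- is exactly what the paper does.

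Your combinatorial induction, on the other hand, is not a proof as written, and you are candid about this. The normalization (twist by $A$, restrict the ground set to $D=A\Delta B$) is fine and is a good way to organize things. But after that, the inductive step genuinely stalls at the point you flag: applying the inductive hypothesis to $(X,Y,a)=(\{a,b_1\},E\setminus\{a,b_2\},a)$ produces $\{b_1,b_3\}\in\B$ and $E\setminus\{b_2,b_3\}\in\B$, which says nothing about sets of the form $\{a,\ast\}$ or $E\setminus\{a,\ast\}$. You describe this as ``delicate bookkeeping'' still to be done, but there is no indication of a mechanism that forces the two one-sided exchanges to share a common second element; as it stands this is a missing idea, not merely missing detail. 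If you want a genuinely self-contained combinatorial proof, the standard route is via the type-$D$ Coxeter structure (essentially what your geometric paragraph gestures at), and that is precisely why the paper outsources it to \cite{coxeter}.
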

General $\Delta$-matroids do not satisfy an analogous strong exchange axiom, as the reader can easily verify.

\subsection{Representability}

As we mentioned before, our interest in even $\Delta$-matroids comes from the study of the possible supports that a Wick vector can have. The following proposition establishes the desired connection.
\begin{proposition}
Let $V$ be a $2n$-dimensional vector space over the field $K$. If $U \subseteq V$ is an $n$-dimensional isotropic subspace with Wick coordinates $w$, then the subsets in $\supp(w):= \{ S \in \sn : w_S \neq 0 \}$ form the collection of bases of an even $\Delta$-matroid $M(U)$ over $[n]$. An even $\Delta$-matroid arising in this way is said to be a \textbf{representable} even $\Delta$-matroid (over the field $K$). 
\end{proposition}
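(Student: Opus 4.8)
The plan is to verify the even symmetric exchange axiom for $\B := \supp(w)$ directly from the Wick relations \eqref{wick}, all of which hold for $w$ since it represents a point of $\Spin^{\pm}(n)$. As $w$ is a nonzero vector of projective coordinates, $\B$ is nonempty, so it suffices to prove: for every $A, B \in \B$ and every $a \in A \Delta B$ there exists $b \in A\Delta B$ with $b \neq a$ and $A\Delta\{a,b\} \in \B$. This already implies the weaker exchange axiom, so $M(U) := ([n],\B)$ will then be an even $\Delta$-matroid by definition.

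First I would record one observation: from the Pfaffian formula for the Wick coordinates, every set in $\supp(w)$ has cardinality congruent to $n - |J| \pmod 2$, so all bases of $M(U)$ share the same parity (consistent with $|A\Delta\{a,b\}| \equiv |A| \pmod 2$ whenever $a\neq b$, and with Proposition \ref{parity}). We will not actually invoke Proposition \ref{parity} — the even axiom is proved by hand — but this is a useful consistency check on the Wick relations used below.

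Now fix $A, B \in \B$ and $a \in A \Delta B$; the cases $a \in A\setminus B$ and $a \in B\setminus A$ are symmetric (interchange the roles of the two index sequences in \eqref{wick}), so assume $a \in A\setminus B$. Let $\sigma$ be any ordering of $A\setminus a$ and $\tau$ any ordering of $B \cup a$, and expand the corresponding Wick relation \eqref{wick}, discarding the monomials containing a repeated index. The term of the first sum indexed by the position of $a$ in $\tau$ equals, up to sign, $w_{\{a\}\cup(A\setminus a)}\cdot w_{(B\cup a)\setminus a} = w_A\cdot w_B \neq 0$. A surviving term of the first sum indexed by $\tau_i\neq a$ must have $\tau_i\notin A\setminus a$, hence $\tau_i \in B\setminus A\subseteq A\Delta B$ with $\tau_i\neq a$, and its factor $w_{\{\tau_i\}\cup(A\setminus a)}$ equals $w_{A\Delta\{a,\tau_i\}}$; a surviving term of the second sum indexed by $\sigma_j$ must have $\sigma_j\notin B\cup a$, hence $\sigma_j\in A\setminus B\subseteq A\Delta B$ with $\sigma_j\neq a$, and its factor $w_{(A\setminus a)\setminus\sigma_j}$ equals $w_{A\Delta\{a,\sigma_j\}}$. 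Since the relation vanishes while $w_Aw_B\neq 0$, at least one non-diagonal monomial is nonzero, and by the above it exhibits an element $b\in A\Delta B$, $b\neq a$, with $w_{A\Delta\{a,b\}}\neq 0$, i.e. $A\Delta\{a,b\}\in\B$.

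The one delicate point — and the main (if mild) obstacle — is the sign and repeated-index bookkeeping in \eqref{wick}: one must confirm that $w_Aw_B$ really appears with nonzero coefficient, rather than being cancelled by another copy of $\pm w_Aw_B$ arising from a different index. A brief case analysis shows that such a coincidence can only happen when $A\Delta B = \{a,b\}$ for the element $b$ involved, in which case $A\Delta\{a,b\} = B\in\B$ and the conclusion holds trivially. Thus in every case the desired $b$ exists, proving that $M(U)$ is an even $\Delta$-matroid. (One can alternatively run the same reduction using only the $4$-term Wick relations together with an induction on $|A\Delta B|$.)
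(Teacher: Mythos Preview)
The paper states this proposition without proof, so there is no ``paper's own proof'' to compare against directly. Your argument is correct and is exactly the classical counterpart of the argument the paper does give in the tropical setting (the first paragraph of the proof of Theorem~\ref{local}): there one takes $S=A\Delta a$, $T=B\Delta a$ and observes that the Wick relation indexed by $(S,T)$ has $w_Aw_B$ as one of its terms, forcing some other term $w_{A\Delta\{a,b\}}\,w_{B\Delta\{a,b\}}$ to be nonzero. Your choice $\sigma=A\setminus a$, $\tau=B\cup a$ gives the same relation written in the alternating form~\eqref{wick}.

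Two minor remarks. First, your ``delicate point'' is actually a non-issue: by cardinality, the monomial $\pm w_Aw_B$ arises from exactly one index (namely $\tau_i=a$), and each $b\in(A\Delta B)\setminus\{a\}$ contributes exactly one surviving term (from the $\tau$-sum if $b\in B\setminus A$, from the $\sigma$-sum if $b\in A\setminus B$), so no cancellation can occur. Second, your argument in fact yields the \emph{strong} exchange axiom of Proposition~\ref{strong} (both $A\Delta\{a,b\}$ and $B\Delta\{a,b\}$ land in $\B$), since the nonvanishing term is the product $w_{A\Delta\{a,b\}}\cdot w_{B\Delta\{a,b\}}$; you may as well claim that stronger conclusion.
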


If $M$ is a matroid over the ground set $[n]$ then we have two different notions of representability: representability as a classical matroid by a linear subspace of $K^n$ and representability as an even $\Delta$-matroid by an $n$-dimensional isotropic subspace of $K^{\J}$. It was shown by Bouchet in \cite{bouchet2} that these two notions agree, so representability for even $\Delta$-matroids in fact generalizes representability for matroids.

Representability is a very subtle property of matroids. Some work has succeeded in studying this property over fields of very small characteristic, but there is no simple and useful characterization of representable matroids over a field of characteristic zero. The study of representability for even $\Delta$-matroids shares the same difficulties, and there seems to be almost no research done in this direction so far.

\subsection{Matroid Polytopes}

A very important and useful way of working with matroids is via their associated polytopes. These polytopes and their subdivisions will play a very important role in the rest of the paper. 

Given any collection $\B$ of subsets of $[n]$ one can associate to it the polytope
\[
\Gamma_\B := \convex \{ e_S : S \in \B \},
\]
where $e_S := \sum_{i \in S} e_i$ is the indicator vector of the subset $S$. The following theorem characterizes the polytopes associated to classical matroids, and thus it gives us a geometrical way of thinking about matroids.
\begin{theorem}[Gelfand, Goresky, MacPherson and Serganova \cite{ggms}]\label{ggmsa}
If $\B \subseteq \sn$ is nonempty then $\B$ is the collection of bases of a matroid if and only if all the edges of the polytope $\Gamma_\B$ have the form $e_i - e_j$, where $i, j \in [n]$ are distinct.
\end{theorem}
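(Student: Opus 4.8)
The plan is to prove the two implications separately; in both directions the first step is to observe that the stated condition forces all members of $\B$ to have the same cardinality. If $\B$ is the collection of bases of a matroid this is built into the definition. Conversely, if every edge of $\Gamma_\B$ is parallel to some $e_i-e_j$, then every edge lies in the linear hyperplane $\{x:\sum x_i=0\}$; since the $1$-skeleton of a polytope is connected, all vertices $e_S$ with $S\in\B$ lie in a common affine hyperplane $\{x:\sum x_i=k\}$, so $|S|=k$ for all $S\in\B$. From this point on $\Gamma_\B$ is a $0/1$-polytope lying in such a hyperplane.

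For the implication ``$\B$ is a matroid $\Rightarrow$ every edge has the form $e_i-e_j$'', I would argue by contradiction. Let $[e_A,e_B]$ be an edge of $\Gamma_\B$, and suppose $|A\Delta B|\ge 4$. Choose a linear functional $w$ whose maximum over $\Gamma_\B$ is attained exactly on this edge, so that $\langle w,e_A\rangle=\langle w,e_B\rangle=:c$ while $\langle w,e_C\rangle<c$ for every vertex $e_C$ with $C\in\B\setminus\{A,B\}$. Starting from a chosen $a_1\in A\setminus B$, I apply the basis-exchange property of matroids repeatedly: it yields $b_1\in B\setminus A$ with $(A\setminus\{a_1\})\cup\{b_1\}\in\B$, then $a_2\in A\setminus B$ with $(B\setminus\{b_1\})\cup\{a_2\}\in\B$, then $b_2\in B\setminus A$ with $(A\setminus\{a_2\})\cup\{b_2\}\in\B$, and so on. Each basis produced differs from $A$ or from $B$ by a symmetric difference of size exactly $2$, hence (using $|A\Delta B|\ge 4$) differs from both $A$ and $B$; evaluating $w$ on it therefore gives the strict inequalities $w_{b_t}<w_{a_t}$ and $w_{a_{t+1}}<w_{b_t}$. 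This produces an infinite strictly decreasing sequence $w_{a_1}>w_{a_2}>w_{a_3}>\cdots$ of values attained on elements of the finite set $A\setminus B$, a contradiction. Hence $|A\Delta B|=2$ and $e_A-e_B=\pm(e_i-e_j)$.

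For the converse, ``every edge has the form $e_i-e_j$ $\Rightarrow$ $\B$ is a matroid'', I would verify the basis-exchange property directly. Fix $A,B\in\B$ and $a\in A\setminus B$ (if $A=B$ there is nothing to prove, so we may assume $A\ne B$, and then $\Gamma_\B$ has an edge at the vertex $e_A$). Consider the tangent cone of $\Gamma_\B$ at $e_A$. By hypothesis together with $\Gamma_\B\subseteq[0,1]^n$, every edge of $\Gamma_\B$ incident to $e_A$ has the form $[e_A,\, e_{(A\setminus\{i\})\cup\{j\}}]$ with $i\in A$ and $j\notin A$, and its direction $e_j-e_i$ is an extreme ray of the tangent cone; since a polytope is contained in the tangent cone at any of its vertices, $e_B-e_A$ is a nonnegative combination $\sum_k\lambda_k(e_{j_k}-e_{i_k})$ of these edge directions, where $\lambda_k\ge 0$ and $(A\setminus\{i_k\})\cup\{j_k\}\in\B$ for each $k$. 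Now I read off two coordinates of this identity. The $a$-coordinate of $e_B-e_A$ is $-1$, and on the right-hand side the only contributions to coordinate $a$ come from the terms with $i_k=a$, each contributing $-\lambda_k$ (note $j_k\notin A$, so $j_k\ne a$); hence $\sum_{k:\,i_k=a}\lambda_k=1$, so there is a $k^*$ with $i_{k^*}=a$ and $\lambda_{k^*}>0$, giving $(A\setminus\{a\})\cup\{j^*\}\in\B$ with $j^*:=j_{k^*}\notin A$. Next, the $j^*$-coordinate of $e_B-e_A$ equals $[\,j^*\in B\,]$, while on the right-hand side it equals $\sum_{k:\,j_k=j^*}\lambda_k\ge\lambda_{k^*}>0$ (there are no contributions from the $i_k$, since $i_k\in A$ but $j^*\notin A$); hence $j^*\in B$, and therefore $b:=j^*\in B\setminus A$ is the required exchange element. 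Together with equicardinality and $\B\ne\emptyset$, this shows $\B$ is the collection of bases of a matroid.

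I expect the substantive part to be the converse. The one point that needs care there is the step that turns the global containment $e_B\in\Gamma_\B\subseteq\mathrm{TanCone}(\Gamma_\B,e_A)$ into a usable combinatorial statement: one must be sure that the extreme rays of this tangent cone are precisely the edge directions at $e_A$, that the hypothesis together with the $0/1$ structure forces each of these directions to have the shape $e_j-e_i$ with $i\in A$ and $j\notin A$, and that each such direction records a genuine single-element exchange $(A\setminus\{i\})\cup\{j\}\in\B$. The forward implication is comparatively routine, using only basis exchange, finiteness, and the fact that every edge of a polytope is the set of maximizers of some linear functional.
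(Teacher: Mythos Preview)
Your argument is correct in both directions, and the points you flag as needing care---that the edge directions at a vertex span the tangent cone, and that a $0/1$ edge parallel to $e_i-e_j$ really is $[e_A,e_{(A\setminus\{i\})\cup\{j\}}]$ with $i\in A$, $j\notin A$---are handled properly.

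There is nothing to compare against, however: the paper does not give a proof of this theorem. It is quoted as a classical result of Gelfand, Goresky, MacPherson and Serganova, with a citation, and is used only as background for the analogous type~$D$ statement (Theorem~\ref{ggms}). So your write-up is not an alternative to the paper's proof but a self-contained proof of a result the paper takes for granted. If you want a reference to check your argument against, the original source is \cite{ggms}; the converse direction there is usually phrased via the greedy algorithm or via the observation that every face of $\Gamma_\B$ maximizing a generic linear functional must be a vertex, which is essentially dual to your tangent-cone argument.
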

Theorem \ref{ggmsa} is just a special case of a very general and fundamental theorem characterizing the associated polytopes of a much larger class of matroids, called Coxeter matroids (see \cite{coxeter}). In the case of even $\Delta$-matroids it takes the following form.
\begin{theorem}\label{ggms}
If $\B \subseteq \sn$ is nonempty then $\B$ is the collection of bases of an even $\Delta$-matroid if and only if all the edges of the polytope $\Gamma_\B$ have the form $\pm e_i \pm e_j$, where $i, j \in [n]$ are distinct.
\end{theorem}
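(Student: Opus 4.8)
The plan is to derive Theorem~\ref{ggms} from the analogous statement for \emph{arbitrary} $\Delta$-matroids together with the parity dichotomy of Proposition~\ref{parity}. Recall (this is the specialization to the hyperoctahedral root system $B_n=C_n$ of the Gelfand--Serganova characterization of Coxeter matroid polytopes; see \cite{coxeter}) that a nonempty $\B \subseteq \sn$ is the collection of bases of a $\Delta$-matroid if and only if every edge of $\Gamma_\B$ has the form $\pm e_i \pm e_j$ with $i \neq j$, or the exceptional form $\pm e_i$. Granting this, it suffices to show that an edge of the exceptional form occurs precisely when $\B$ fails to consist of sets of a single parity.

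For that last point, first note that an edge $e_A - e_B = \pm e_i$ joins two bases $A,B$ with $|A\Delta B|=1$, hence of opposite parity; so a $\Delta$-matroid all of whose bases have the same parity has no such edge. Conversely, along any edge $e_B-e_A=\pm e_i\pm e_j$ the integer $(e_B-e_A)\cdot e_{[n]} = |B|-|A|$ is even, so the parity of $|S|$ is constant along edges of $\Gamma_\B$; since the $1$-skeleton of a polytope is connected, all vertices $e_S$ of $\Gamma_\B$ — equivalently all $S\in\B$ — then have the same parity. Combining with Proposition~\ref{parity}, which identifies even $\Delta$-matroids with the $\Delta$-matroids whose bases all have the same parity, we obtain the chain of equivalences: $\B$ is the set of bases of an even $\Delta$-matroid $\iff$ $\B$ is the set of bases of a $\Delta$-matroid and all members of $\B$ have the same parity $\iff$ every edge of $\Gamma_\B$ has the form $\pm e_i\pm e_j$ with $i\neq j$. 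This is the assertion of the theorem.

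It is worth recording that the ``only if'' direction has a quick self-contained proof avoiding the general Coxeter machinery, using the strong exchange axiom of Proposition~\ref{strong}. Suppose $[e_A,e_B]$ is an edge of $\Gamma_\B$ with $|A\Delta B|\ge 3$, and let $\omega$ be a linear functional whose maximal face on $\Gamma_\B$ is exactly that segment. Pick $a\in A\Delta B$ and apply Proposition~\ref{strong} to get $b\in A\Delta B$ with $b\neq a$ and $A\Delta\{a,b\},\,B\Delta\{a,b\}\in\B$. A short case check — according to whether each of $a,b$ lies in $A\setminus B$ or in $B\setminus A$ — shows that the inequalities $\omega\cdot e_{A\Delta\{a,b\}}\le\omega\cdot e_A$ and $\omega\cdot e_{B\Delta\{a,b\}}\le\omega\cdot e_B=\omega\cdot e_A$ force $\omega\cdot e_{A\Delta\{a,b\}}=\omega\cdot e_A$; but $e_{A\Delta\{a,b\}}\notin\{e_A,e_B\}$ because $|A\Delta B|\ge 3$, so the maximal face of $\omega$ contains a vertex off the segment $[e_A,e_B]$, a contradiction. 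Hence $|A\Delta B|\le 2$, and since $|A\Delta B|$ is even by Proposition~\ref{parity} we get $|A\Delta B|=2$ and the edge direction is $\pm e_i\pm e_j$.

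The genuinely hard content is the converse — manufacturing the symmetric exchange property from the edge condition on $\Gamma_\B$ — which is exactly (the $B_n$ case of) the Gelfand--Serganova theorem. I would invoke it rather than reprove it, since a direct argument requires either the moment-map / thin-Schubert-cell description of Coxeter matroids or a somewhat delicate linear-programming argument analyzing the faces of $\Gamma_\B$. With that theorem in hand, the only new work is the parity bookkeeping above, which is routine.
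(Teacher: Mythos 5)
Your argument is correct. Note, however, that the paper does not actually prove Theorem~\ref{ggms}: it is presented as the type-$D$ specialization of the general Gelfand--Serganova characterization of Coxeter matroid polytopes and simply referred to \cite{coxeter}. Your route is a mild but genuine variant of that: instead of quoting the $D_n$ case directly, you quote the $B_n/C_n$ case (a nonempty $\B$ is a $\Delta$-matroid iff every edge of $\Gamma_\B$ is of the form $\pm e_i\pm e_j$ or $\pm e_i$) and then do the parity bookkeeping --- an exceptional edge $\pm e_i$ joins bases of opposite parity, while edges $\pm e_i\pm e_j$ preserve parity and the $1$-skeleton of a polytope is connected --- to match the result with Proposition~\ref{parity}. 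That reduction is sound. Your self-contained proof of the ``only if'' direction from the strong exchange axiom (Proposition~\ref{strong}) is also correct: since $a,b\in A\Delta B$, the displacement vectors $e_{A\Delta\{a,b\}}-e_A$ and $e_{B\Delta\{a,b\}}-e_B$ are negatives of each other, so maximality of $\omega$ at both $e_A$ and $e_B$ forces $e_{A\Delta\{a,b\}}$ onto the face cut out by $\omega$, contradicting $|A\Delta B|\ge 3$. What your approach buys is an honest proof of the easy direction and a clear isolation of where the hard content sits (deducing the exchange axiom from the edge condition), which you rightly leave to \cite{coxeter}; what it costs is nothing, since both routes ultimately rest on the same external theorem.
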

These results allow us think of matroids and even $\Delta$-matroids in terms of irreducible root systems: classical matroids should be thought of as the class of matroids of type A, and even $\Delta$-matroids as the class of matroids of type D.

\subsection{Circuits and Symmetric Matroids}

We will now describe a notion of circuits for even $\Delta$-matroids that generalizes the notion of circuits for matroids. For this purpose we will introduce symmetric matroids, an concept equivalent to $\Delta$-matroids. We will present here only the basic properties needed for the rest of the paper; a much more detailed description can be found in \cite{coxeter}.

Consider the sets
\[
[n]:=\{1,2,\dotsc,n\} \text{ and } [n]^*:=\{1^*,2^*, \dotsc, n^* \}. 
\]
Define the map $*:[n] \to [n]^*$ by $i \mapsto i^*$ and the map $*:[n]^* \to [n]$ by $i^* \mapsto i$. We can think of $*$ as an involution of the set $\J:=[n] \cup [n]^*$, where for any $j \in \J$ we have $j^{**}=j$. If $J \subseteq \J$ we define $J^*:=\{j^* : j \in J \}$. We say that the set $J$ is \textbf{admissible} if $J \cap J^* = \emptyset$, and that it is a \textbf{transversal} if it is an admissible subset of size $n$. For any $S \subseteq [n]$, we define its \textbf{extension} $\bar{S} \subseteq \J$ to be the transversal given by $\bar{S}:= S \cup ([n] \setminus S)^*$, and for any transversal $J$ we will define its \textbf{restriction} to be the set $J \cap [n]$. Extending and restricting are clearly bijections (inverse to each other) between the set $\sn$ and the set of transversals $\V(n)$ of $\J$.
 
\begin{definition} 
Given a $\Delta$-matroid $M=([n], \B)$, the \textbf{symmetric matroid} associated to $M$ is the collection $\bar{\B}$ of transversals defined as $\bar{\B} := \{ \bar{B} : B \in \B \}$.
\end{definition}

There is of course no substantial difference between a $\Delta$-matroid and its associated symmetric matroid; however, working with symmetric matroids will allow us to simplify the forthcoming definitions.
\begin{definition}\label{cycles}
Let $M = ([n], \B)$ be an even $\Delta$-matroid over $[n]$. A subset $S \subseteq \J$ is called \textbf{independent} in $M$ if it is contained in some transversal $\bar{B} \in \bar{\B}$, and it is called \textbf{dependent} in $M$ if it is not independent. A subset $C \subseteq \J$ is called a \textbf{circuit} of $M$ if $C$ is a minimal dependent subset which is admissible. A \textbf{cocircuit} of $M$ is a circuit of the dual even $\Delta$-matroid $M^*$. The set of circuits of $M$ will be denoted by $\C(M)$, and the set of cocircuits by $\C^*(M)$. An admissible union of circuits of $M$ is called a \textbf{cycle} of $M$. A \textbf{cocycle} of $M$ is a cycle of the dual even $\Delta$-matroid $M^*$.
\end{definition}

The next observation shows that our definition of circuits for even $\Delta$-matroids indeed generalizes the concept of circuits for matroids.
\begin{proposition}\label{circuits}
Let $M = ([n], \B)$ be a matroid. Denote by $\C$ its collection of (classical) circuits and by $\K$ its collection of (classical) cocircuits. Then the collection of circuits of $M$, when considered as an even $\Delta$-matroid, is
\[
\{C : C \in \C\} \cup \{ K^* : K \in \K \}.
\]
\end{proposition}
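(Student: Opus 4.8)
The plan is to unwind both definitions (classical circuits/cocircuits of $M$, and circuits of the even $\Delta$-matroid $\bar M$) and match them up. Recall that for a matroid $M = ([n], \B)$ with rank $r$, the symmetric matroid is $\bar\B = \{\bar B : B \in \B\}$ where $\bar B = B \cup ([n]\setminus B)^*$, so every transversal in $\bar\B$ contains exactly $r$ unstarred elements and $n-r$ starred elements. A subset $S \subseteq \J$ is independent in $\bar M$ iff $S \subseteq \bar B$ for some basis $B$; a circuit is a minimal admissible dependent set. The key structural observation I would establish first is the following: an admissible set $S = S_1 \cup S_2^*$ with $S_1, S_2 \subseteq [n]$ disjoint is independent in $\bar M$ if and only if $S_1$ is independent in the matroid $M$ \emph{and} $S_2$ is independent in the dual matroid $M^*$ (equivalently, $S_2$ is coindependent in $M$). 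Indeed, $S_1 \cup S_2^* \subseteq \bar B = B \cup ([n]\setminus B)^*$ means $S_1 \subseteq B$ and $S_2 \subseteq [n]\setminus B$, i.e. there is a basis $B$ containing $S_1$ and disjoint from $S_2$; by the standard matroid fact (which I would cite or prove in one line using basis exchange) such a $B$ exists iff $S_1$ is independent in $M$ and $S_2$ is independent in $M^*$.

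Given this, I would argue that a minimal admissible dependent set $C = C_1 \cup C_2^*$ must have one of its two ``halves'' empty. If both $C_1$ and $C_2$ were nonempty, then $C$ being dependent means $C_1$ is dependent in $M$ or $C_2$ is dependent in $M^*$; in the first case $C_1$ alone (viewed inside $\J$) is already an admissible dependent set strictly contained in $C$, contradicting minimality, and symmetrically in the second case. So every circuit of $\bar M$ is either of the form $C_1 \subseteq [n]$ with $C_1$ dependent in $M$, or of the form $C_2^*$ with $C_2$ dependent in $M^*$. Minimality then forces $C_1$ to be a \emph{minimal} dependent set of $M$, i.e. a classical circuit of $M$, and $C_2$ to be a minimal dependent set of $M^*$, i.e. a classical cocircuit of $M$, so $C_2^* = K^*$ for $K \in \K$. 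This shows $\C(\bar M) \subseteq \{C : C \in \C\} \cup \{K^* : K \in \K\}$. For the reverse inclusion I would check that each $C \in \C$ and each $K^*$ with $K \in \K$ is indeed a minimal admissible dependent set: admissibility is clear since these sets lie entirely in $[n]$ or entirely in $[n]^*$; dependence follows from the independence criterion above (taking $S_2 = \emptyset$, resp. $S_1 = \emptyset$); and minimality is immediate because a proper subset of $C$ is independent in $M$ hence independent in $\bar M$, and likewise for $K^*$ using $M^*$.

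One subtlety I would address explicitly is the empty-set / degenerate cases: if $M$ has a loop, $\{\ell\} \in \C$ is a singleton circuit and indeed $\{\ell\}$ is a minimal dependent admissible subset of $\J$; if $M$ has a coloop $c$, then $\{c\}$ is a classical cocircuit and $\{c^*\}$ is dependent in $\bar M$ since no basis of $M$ avoids $c$. Also the empty set is independent in every matroid and in $\bar M$, so it never appears as a circuit, consistent with both sides.

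I do not expect a serious obstacle here — the proof is essentially a bookkeeping exercise once the independence criterion ``$S_1 \cup S_2^*$ independent in $\bar M$ $\iff$ $S_1$ independent in $M$ and $S_2$ independent in $M^*$'' is in place. The one point that deserves care, and which I would call the heart of the argument, is precisely that criterion, specifically the ``only if'' direction amounting to the matroid fact that a set $S_1$ that is independent in $M$ and a set $S_2$ that is coindependent in $M$ and disjoint from $S_1$ can be simultaneously extended so that $S_1$ lies in a basis $B$ and $S_2$ lies in its complement; this is a standard consequence of the exchange axiom (e.g. extend $S_1$ to a basis inside $[n]\setminus S_2$, which is possible exactly because $S_2$ being coindependent means $[n]\setminus S_2$ is spanning) but it is the only place real matroid theory enters.
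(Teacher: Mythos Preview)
Your proof is correct. The paper does not actually supply a proof of this proposition; it is introduced as an ``observation'' and left to the reader, so there is nothing to compare against. Your route via the independence criterion --- an admissible set $S_1 \cup S_2^*$ is independent in the symmetric matroid iff $S_1$ is independent in $M$ and $S_2$ is independent in $M^*$ --- is the natural one, and the deduction of minimality from it is clean.

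One small slip worth fixing: in your final paragraph you call the nontrivial direction of that criterion the ``only if'' direction, but it is the ``if'' direction (given $S_1$ independent in $M$ and $S_2$ coindependent, \emph{produce} a basis $B$ with $S_1 \subseteq B$ and $S_2 \cap B = \emptyset$). The direction ``$S_1 \cup S_2^*$ independent in $\bar M \Rightarrow S_1$ independent in $M$ and $S_2$ independent in $M^*$'' is immediate. Your justification --- extend $S_1$ to a basis inside the spanning set $[n]\setminus S_2$ --- is correct regardless of the label.
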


Many of the results about circuits in matroids generalize to this extended setup. We will just state here some of the facts that we will use later; their proofs can be found in \cite{coxeter}.
\begin{proposition}\label{fundamental}
Let $M = ([n], \B)$ be an even $\Delta$-matroid. Suppose $\bar{B} \in \bar{\B}$ and $j \in \J \setminus \bar{B}$. Then $\bar{B} \cup j$ contains a unique circuit $C(\bar{B},j)$, called the \textbf{fundamental circuit} of $j$ over $\bar{B}$. It is given by
\[
C(\bar{B},j) = \left\{ i \in \bar{B} : \bar{B} \Delta \{j, j^*, i, i^*\} \in \bar{\B} \right\} \cup j.
\] 
\end{proposition}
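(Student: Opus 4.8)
The plan is to reduce the statement to a question about the subsets of a single transversal, dispatch the easy parts directly, and then prove the one non-formal point by a short exchange argument. First I would record three reductions. Since $\bar B$ is a transversal with $j\notin\bar B$, necessarily $j^{*}\in\bar B$, so $\bar B\cup j$ contains the whole pair $\{j,j^{*}\}$; hence an admissible $S\subseteq\bar B\cup j$ either misses $j$ — and then $S\subseteq\bar B$ is independent — or contains $j$, in which case $S\subseteq(\bar B\cup j)\setminus j^{*}=:\bar B'$, where $\bar B'=\bar B\,\Delta\,\{j,j^{*}\}$ is again a transversal, so that every subset of $\bar B'$ is automatically admissible. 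As $\bar B'\setminus j=\bar B\setminus j^{*}\subseteq\bar B$ is independent, it follows that the circuits of $M$ inside $\bar B\cup j$ are exactly the inclusion-minimal dependent subsets of $\bar B'$, and each of them contains $j$. Next, $\bar B'\notin\bar{\B}$, because $\bar B'\cap[n]$ and $B:=\bar B\cap[n]$ have cardinalities differing by one, hence opposite parities, while by Proposition \ref{parity} all bases of an even $\Delta$-matroid have a common parity; in particular $\bar B'$ is itself dependent, so $\bar B\cup j$ does contain a circuit. Finally, writing $\bar B'=(\bar B\setminus j^{*})\cup j$ and using $\bar B'\notin\bar{\B}$, one checks that the set asserted in the statement may be rewritten as
\[
C(\bar B,j)=\bigl\{\,i\in\bar B':\ \bar B'\,\Delta\,\{i,i^{*}\}\in\bar{\B}\,\bigr\},
\]
and $j$ lies in it since $\bar B'\,\Delta\,\{j,j^{*}\}=\bar B\in\bar{\B}$.

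Write $C:=C(\bar B,j)$ in this rewritten form and, for $i\in C$, set $\bar B_i:=\bar B'\,\Delta\,\{i,i^{*}\}\in\bar{\B}$. The trivial half is that every proper subset of $C$ is independent: if $i\in C$ and $C'\subseteq C\setminus i$, then $C'\subseteq\bar B'\setminus i\subseteq\bar B_i$, so $C'$ is independent. Granting for the moment that $C$ itself is dependent, it is therefore a circuit, and it is the only one inside $\bar B\cup j$: any other circuit $D$ there satisfies $D\subseteq\bar B'$, and since $C$ and $D$ are both minimal dependent neither is contained in the other, so one may pick $i\in C\setminus D$; but then $D\subseteq\bar B'\setminus i\subseteq\bar B_i\in\bar{\B}$, making $D$ independent — a contradiction. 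Thus the whole proposition comes down to showing that $C$ is dependent.

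To prove this, I would suppose instead that $C\subseteq\bar A$ for some $\bar A\in\bar{\B}$. Since $j\in C\subseteq\bar A$ and $\bar A$ is admissible, $j^{*}\notin\bar A$; as $j^{*}\in\bar B$, the slot of $j$ is one in which the transversals $\bar A$ and $\bar B$ disagree. I would then invoke the even strong exchange axiom (Proposition \ref{strong}, in its symmetric-matroid form) for the bases $\bar A$ and $\bar B$, taking the distinguished element to be the generator of the slot of $j$: it supplies a slot $t$, different from that of $j$ and again one in which $\bar A$ and $\bar B$ disagree, such that flipping the slots of $j$ and of $t$ simultaneously in $\bar B$ yields a member of $\bar{\B}$. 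But flipping the slot of $j$ in $\bar B$ produces exactly $\bar B'$, so flipping the slot of $t$ in $\bar B'$ produces a basis; hence the element $b_t$ of $\bar B'$ occupying slot $t$ lies in $C$, by the rewritten formula. Now $\bar B'$ and $\bar B$ agree in slot $t$, so $b_t$ is also the element of $\bar B$ in slot $t$; and since slot $t$ is a slot of disagreement between $\bar A$ and $\bar B$, the transversal $\bar A$ contains $b_t^{*}$ rather than $b_t$, i.e.\ $b_t\notin\bar A$ — contradicting $b_t\in C\subseteq\bar A$. Hence no such $\bar A$ exists, $C$ is dependent, and the proposition follows.

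The hard part will be entirely the bookkeeping of this last step: one has to fix a clean symmetric-matroid formulation of the strong exchange axiom and stay scrupulous about the difference between a slot (an unordered pair $\{i,i^{*}\}$) and the element filling it, so as to be sure that the slot $t$ handed back by the axiom really is different from that of $j$ and really is a slot where $\bar A$ and $\bar B$ disagree. The remaining ingredients — the three reductions, the fact that proper subsets of $C$ are independent, and the uniqueness deduction — are formal once those conventions are in place. An alternative route to the dependence of $C$ would pass through the polytope description (Theorem \ref{ggms}), recovering $C$ from the edges of $\Gamma_{\B}$ at the non-vertex $e_{B'}$; but I expect the exchange argument above to be the shorter one.
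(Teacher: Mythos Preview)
The paper does not actually prove this proposition: it is one of the facts whose ``proofs can be found in \cite{coxeter}'', so there is nothing to compare your argument against within the paper itself. That said, your proof is correct and self-contained, and the three reductions, the rewriting of $C(\bar B,j)$ in terms of $\bar B'=\bar B\,\Delta\,\{j,j^{*}\}$, the check that proper subsets of $C$ are independent, and the uniqueness deduction are all clean and accurate.

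One small observation on the key step: you invoke the strong exchange axiom (Proposition~\ref{strong}), but you only use the conclusion $B\,\Delta\,\{a,b\}\in\B$, never $A\,\Delta\,\{a,b\}\in\B$. The ordinary even $\Delta$-matroid exchange axiom, applied with the roles of $A$ and $B$ reversed (take $a\in B\,\Delta\,A$ and obtain $b\neq a$ with $B\,\Delta\,\{a,b\}\in\B$), already gives what you need. This makes the argument marginally lighter and shows that the formula for the fundamental circuit does not rely on the deeper Proposition~\ref{strong}. Your caution about the slot/element bookkeeping is well placed, but once you phrase the exchange axiom in the $\Delta$-matroid language (with $a,b\in[n]$ and $\{a,a^{*}\}=\{j,j^{*}\}$) the translation to transversals is mechanical: $\overline{B\,\Delta\,\{a,b\}}=\bar B\,\Delta\,\{a,a^{*},b,b^{*}\}=\bar B'\,\Delta\,\{b,b^{*}\}$, and $b\in A\,\Delta\,B$ says exactly that $\bar A$ and $\bar B$ (hence $\bar B'$) disagree in slot $b$.
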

 
\begin{proposition}\label{circcocirc}
Let $M$ be an even $\Delta$-matroid. If $C$ is a circuit of $M$ and $K$ is a cocircuit of $M$ then $|C \cap K| \neq 1$.
\end{proposition}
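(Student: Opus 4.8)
The statement to prove is Proposition~\ref{circcocirc}: if $C$ is a circuit and $K$ a cocircuit of an even $\Delta$-matroid $M$, then $|C \cap K| \neq 1$. Note $C$ and $K$ are both admissible subsets of $\J$.

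The plan is to argue by contradiction, mimicking the classical matroid proof that a circuit and a cocircuit cannot meet in exactly one element. Suppose $C \cap K = \{j\}$ for some $j \in \J$. Since $K$ is a cocircuit of $M$, it is a circuit of $M^*$, hence a minimal dependent admissible subset of $M^*$; equivalently, in terms of the symmetric matroid $\overline{\B^*}$ (whose transversals are the complements $\J \setminus \bar B$ for $\bar B \in \bar\B$, suitably interpreted), $K$ fails to be contained in any transversal of $M^*$, but $K - j$ is. I would translate ``$K$ is a cocircuit'' into a statement about $\bar\B$ directly: $K$ is dependent in $M^*$ means $\J \setminus K$ contains no transversal $\bar B \in \bar\B$, while $K - k$ dependent-free for every $k \in K$ means $\J \setminus (K-k)$ does contain some $\bar B_k \in \bar\B$. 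Pick $k = j$: there is $\bar B \in \bar\B$ with $\bar B \subseteq \J \setminus (K - j)$, so $\bar B \cap K \subseteq \{j\}$; since $\bar B$ is a transversal and $K$ admissible, one checks $j \in \bar B$ (otherwise $\bar B \subseteq \J \setminus K$, contradicting dependence of $K$). So we have found $\bar B \in \bar\B$ with $\bar B \cap K = \{j\}$.

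Now use the circuit $C$ against this basis. Since $C$ is a circuit, $C$ is not contained in $\bar B$ (it is dependent), so there is some element of $C$ outside $\bar B$. If $C - j \subseteq \bar B$, then $C \not\subseteq \bar B$ forces $j \notin \bar B$ — but we arranged $j \in \bar B$; so actually we should set things up so that $C$ is a circuit and look at its fundamental-circuit structure. The cleanest route: choose $\bar B$ so that $C - c \subseteq \bar B$ for some $c \in C$ (possible because $C$ minimal dependent means $C - c$ is independent, hence extends to a transversal in $\bar\B$), and additionally arrange that this transversal meets $K$ only in $j$. This requires combining the minimality of $C$ with the cocircuit condition; the standard trick is: take a transversal $\bar B_0 \supseteq C - j$ from $\bar\B$, and if $\bar B_0 \cap K \ne \{j\}$, repair it one element at a time using the strong exchange axiom (Proposition~\ref{strong}) or the fundamental circuit description (Proposition~\ref{fundamental}) to push extra elements of $K \cap \bar B_0$ out while keeping $C - j$ inside. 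Once we have $\bar B \in \bar\B$ with $C - j \subseteq \bar B$ and $\bar B \cap K = \{j\}$, consider the fundamental circuit $C(\bar B \Delta\{j,j^*\}, j)$ or directly the fundamental cocircuit of $j^*$: since $j \in \bar B$, the element $j^* \notin \bar B$ has a fundamental circuit in $M^*$ contained in $(\J \setminus \bar B) \cup j^*$, and by Proposition~\ref{fundamental} applied to $M^*$ this fundamental cocircuit $K' := C^*(\J\setminus\bar B, j^*)$ satisfies $K' \cap \bar B = \{j\}$... and then $K'$ and $C$ are a cocircuit and circuit with $|C \cap K'| = 1$ only if... — better, I would instead directly derive the contradiction: with $C - j \subseteq \bar B$ and $C \not\subseteq \bar B$ we get $j \notin \bar B$, contradicting $j \in \bar B$. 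That contradiction closes the argument.

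The main obstacle is the simultaneous control step: producing a single transversal $\bar B \in \bar\B$ that both contains $C - j$ (witnessing minimality of the circuit $C$) and meets the cocircuit $K$ in exactly $\{j\}$ (witnessing the cocircuit structure). In classical matroid theory this is handled by a short counting or a single application of the circuit–cocircuit incidence via bases; here the right tool is Proposition~\ref{fundamental} together with the strong exchange axiom of Proposition~\ref{strong}, which lets us perform the needed symmetric-difference swaps $\bar B \mapsto \bar B \Delta\{a,a^*,b,b^*\}$ staying inside $\bar\B$ while not disturbing $C - j$. I would spell out this exchange carefully, tracking that each swap strictly decreases $|\bar B \cap K| - 1 \geq 0$ or reaches the desired configuration, and that admissibility of $C$ and $K$ guarantees the starred partners behave correctly. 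Everything else — translating ``circuit'' and ``cocircuit'' into independence statements about transversals, and extracting the final contradiction — is routine bookkeeping with the definitions in Section~\ref{secdelta}.
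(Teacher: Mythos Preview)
The paper does not actually prove Proposition~\ref{circcocirc}; it is one of the facts whose proof is deferred to \cite{coxeter}. So there is no ``paper's proof'' to compare against, and the question is simply whether your sketch can be turned into a correct argument.

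Your overall strategy is the right one: assume $C\cap K=\{j\}$, produce a transversal $\bar B\in\bar\B$ with $C-j\subseteq\bar B$ and $\bar B\cap K=\{j\}$, and then observe that $j\in\bar B$ forces $C\subseteq\bar B$, contradicting the dependence of $C$. The two halves separately are fine: from $C-j$ independent you get some $\bar B_0\supseteq C-j$ (necessarily with $j\notin\bar B_0$), and from $K-j$ independent in $M^*$ you get some $\bar B_2$ with $\bar B_2\cap K=\{j\}$.

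The gap is exactly where you flag it, and your proposed fix does not close it. You say the strong exchange axiom lets you perform swaps $\bar B_0\mapsto\bar B_0\Delta\{a,a^*,b,b^*\}$ ``while not disturbing $C-j$'', but the axiom only furnishes \emph{some} $b\in B_0\Delta B_2$ with $B_0\Delta\{a,b\}\in\B$; it gives you no control over which $b$ you get. If the element of $\{b,b^*\}$ lying in $\bar B_0$ happens to belong to $C-j$ (which is perfectly possible: nothing prevents $C-j$ from meeting $\bar B_0\setminus\bar B_2$), then the swap throws that element out and $C-j\not\subseteq\bar B_0'$. Your monovariant ``$|\bar B\cap K|-1$ strictly decreases'' also fails in the case where the swap happens to bring $j$ into $\bar B_0'$: you lose $k$ from $\bar B_0\cap K$ but gain $j$, so the count is unchanged. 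In short, the repair procedure as described can oscillate or destroy the inclusion $C-j\subseteq\bar B$, and you have not supplied the extra argument (an extremal choice of $\bar B_0$, a more careful case analysis, or a direct appeal to the independence of $(C-j)\cup(K-j)^*$ in $M$) that would rule this out. Until that step is nailed down, the proof is incomplete.
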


\begin{example}
Take $n=3$, and let $U$ be the isotropic subspace of $\mathbb{C}^{\textbf{6}}$ defined as the rowspace of the matrix
\[
M = 
\bordermatrix{
& \textbf{1} & \textbf{2} & \textbf{3} & \textbf{1}^* & \textbf{2}^* & \textbf{3}^* \cr
& 1 & 0 & 0 & 0 & 1 & -1 \cr
& 0 & 1 & 0 & -1 & 0 & 2 \cr
& 0 & 0 & 1 & 1 & -2 & 0
}.
\]
The even $\Delta$-matroid $M$ represented by $U$ has bases $\B=\{123,1,2,3\}$, corresponding to the support of its vector of Wick coordinates. Its associated polytope is the tetrahedron with vertices $(1,1,1), (1,0,0), (0,1,0), (0,0,1)$, whose edges are indeed of the form $\pm e_i \pm e_j$. The circuits of $M$ are the admissible subsets $1^*23, 12^*3,123^*,1^*2^*3^*$. The dual even $\Delta$-matroid $M^*$ has bases $\B^* = \{ \emptyset, 12, 13, 23 \}$. The cocircuits of $M$ are the admissible subsets
$123,12^*3^*,1^*23^*,1^*2^*3$.
\end{example}

\subsection{Minors and Rank}\label{secrank}

We end up this section with a very brief discussion of minors and rank for even $\Delta$-matroids. More information can be found in \cite{multi1, multi2}.
\begin{definition}
Let $M = ([n], \B)$ be an even $\Delta$-matroid. Given $S \in \sn$, consider the subcollections of bases $\B^+ := \{ B \in \B : |B \cap S| \text{ is maximal} \}$ and $\B^- := \{ B \in \B : |B \cap S| \text{ is minimal} \}$. It is not hard to check that the collections $\B^+ \setminus S := \{ B\setminus S: B \in \B^+ \}$ and $\B^- \setminus S := \{ B\setminus S: B \in \B^- \}$ are collections of bases of even $\Delta$-matroids over the ground set $[n] \setminus S$. These even $\Delta$-matroids are called the \textbf{contraction} of $S$ from $M$ and the \textbf{deletion} of $S$ from $M$, respectively, and are denoted by $M/S$ and $M \setminus S$. The deletion of $S$ from $M$ is sometimes called the \textbf{restriction} of $M$ to the ground set $[n] \setminus S$. A \textbf{minor} of $M$ is an even $\Delta$-matroid that can be obtained by a sequence of contractions and deletions from the matroid $M$.
\end{definition}
Note that deletion and contraction are operations dual to each other: for any $S \in \sn$ we have $(M/S)^* = M^* \setminus S$.

We now define the rank function of an even $\Delta$-matroid by means of its associated symmetric matroid.
\begin{definition}
Let $M = ([n], \B)$ be an even $\Delta$-matroid. The \textbf{rank} in $M$ of an admissible subset $J \subseteq \J$ is defined as
\[
r_M(J) := \max_{\bar{B} \in \bar{\B}} |\bar{B} \cap J|.
\]
\end{definition}
The following proposition can be checked without too much difficulty.
\begin{proposition}\label{rankminors}
Let $M = ([n], \B)$ be an even $\Delta$-matroid, and let $S \in \sn$. Then the rank functions of the contraction $M/S$ and the deletion $M \setminus S$ are given by
\begin{align*}
r_{M/S}(T) &= r_M(T \cup S) - r_M (S), \\
r_{M \setminus S}(T) &= r_M (T \cup S^*) - r_M (S^*);
\end{align*}
where $T \subseteq \J \setminus (S \cup S^*)$ is any admissible subset.
\end{proposition}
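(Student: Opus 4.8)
The plan is to rewrite both rank functions as maxima of cardinalities of intersections with transversals, to prove the contraction identity directly from the exchange axiom for even $\Delta$-matroids, and then to deduce the deletion identity by duality. As a first step, write $\J' := ([n]\setminus S)\cup([n]\setminus S)^*$ and observe that for any basis $B$ of $M$ the extension of $B\setminus S$ inside the ground set $[n]\setminus S$ is exactly $\bar B\cap\J'$; since $T\subseteq\J'$ this gives $r_{M/S}(T)=\max_{B\in\B^+}|\bar B\cap T|$. Because $S\subseteq[n]$ we have $|\bar B\cap S|=|B\cap S|$ for every $B$, so $r_M(S)=\max_{B\in\B}|B\cap S|$ and $\B^+$ is precisely the set of bases attaining this maximum; and since $T$ and $S$ are disjoint, $r_M(T\cup S)=\max_{B\in\B}\bigl(|\bar B\cap T|+|B\cap S|\bigr)$. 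Restricting this last maximum to $\B^+$, where $|B\cap S|=r_M(S)$, immediately gives the easy inequality $r_M(T\cup S)\ge r_M(S)+r_{M/S}(T)$.

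For the reverse inequality it will be enough to show that $|\bar B\cap T|+|B\cap S|\le r_M(S)+r_{M/S}(T)$ for every $B\in\B$, and I will obtain this by repeatedly applying one exchange step: whenever $|B\cap S|<r_M(S)$ there is a basis $B'$ with $|B'\cap S|\ge|B\cap S|+1$ and $|\bar{B'}\cap T|\ge|\bar B\cap T|-1$. Indeed, pick any $B_1\in\B^+$; since $|B_1\cap S|>|B\cap S|$ there is an element $a\in(B_1\setminus B)\cap S$, so $a\in B\,\Delta\,B_1$, and the exchange axiom yields $b\in B\,\Delta\,B_1$ with $b\neq a$ and $B':=B\,\Delta\,\{a,b\}\in\B$. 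As $a\notin B$, the element $a$ is added in passing from $B$ to $B'$, so $|B'\cap S|\ge|B\cap S|+1$. On the transversal side $\bar{B'}=\bar B\,\Delta\,\{a,a^*\}\,\Delta\,\{b,b^*\}$ is obtained from $\bar B$ by two swaps of conjugate pairs: the swap of $\{a,a^*\}$ leaves $|\,\cdot\,\cap T|$ unchanged because $a\in S$ forces $a,a^*\notin T$, and the swap of $\{b,b^*\}$ changes it by at most $1$ since $T$ is admissible, so $|\bar{B'}\cap T|\ge|\bar B\cap T|-1$. Iterating the step down to a basis $B^*\in\B^+$, the total gain in $|B\cap S|$ is $r_M(S)-|B\cap S|$ and the total loss in $|\bar B\cap T|$ is no larger, so $|\bar{B^*}\cap T|\ge|\bar B\cap T|-\bigl(r_M(S)-|B\cap S|\bigr)$ and hence $|\bar B\cap T|+|B\cap S|\le|\bar{B^*}\cap T|+r_M(S)\le r_{M/S}(T)+r_M(S)$. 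Taking the maximum over $B\in\B$ proves the contraction formula. I expect this exchange step to be the only real obstacle: one must arrange that each exchange raises $|B\cap S|$ by at least as much as it lowers $|\bar B\cap T|$, and the crucial point is precisely that the flipped element $a$ can be chosen inside $S$, so that its conjugate swap is invisible to $T\subseteq\J\setminus(S\cup S^*)$. (Note that only the defining exchange axiom of even $\Delta$-matroids is needed here, not the strong exchange axiom of Proposition \ref{strong}.)

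The deletion formula then follows by duality. From $\B^*=\{[n]\setminus B:B\in\B\}$ one checks that $\bar{\B^*}=\{(\bar B)^*:B\in\B\}$, so $r_{M^*}(J)=r_M(J^*)$ for every admissible $J$. Since $M\setminus S=(M^*/S)^*$ and $T^*\subseteq\J\setminus(S\cup S^*)$ is again admissible, applying the contraction formula to $M^*$ gives
\[
r_{M\setminus S}(T)=r_{M^*/S}(T^*)=r_{M^*}(T^*\cup S)-r_{M^*}(S)=r_M\bigl((T^*\cup S)^*\bigr)-r_M(S^*)=r_M(T\cup S^*)-r_M(S^*),
\]
which is the asserted identity.
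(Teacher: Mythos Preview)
The paper does not actually prove this proposition; it only remarks that it ``can be checked without too much difficulty.'' So there is no proof to compare against, and your overall strategy---reduce both sides to intersections with transversals, prove the contraction formula by an exchange argument, and derive the deletion formula by duality---is perfectly reasonable. The duality computation at the end is correct.

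There is, however, a genuine gap in your exchange step. You assert that after $B' = B \,\Delta\, \{a,b\}$ one has $|B' \cap S| \geq |B \cap S| + 1$, reasoning only that $a$ is added. But the exchange axiom places no restriction on where $b$ lands: if $b \in (B \setminus B_1) \cap S$, then $b$ is \emph{removed} from $B \cap S$, and you only obtain $|B' \cap S| = |B \cap S|$. This is not hypothetical---for example, with $M = U_{2,4}$, $S = \{1,2,3\}$, $B = \{1,4\}$, $B_1 = \{2,3\}$, $a = 2$, the choice $b = 1$ is a legitimate exchange yielding $B' = \{2,4\}$ with $|B' \cap S| = 1 = |B \cap S|$. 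This breaks both your termination argument (which relies on $|B \cap S|$ strictly increasing) and your accounting of the total loss in $|\bar B \cap T|$.

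The repair is short. Check that in every case the gain in $|B \cap S|$ is at least the loss in $|\bar B \cap T|$: if $b \in S$ then $b, b^* \notin T$, so the $b$-swap is invisible to $T$ and the loss is $0$ (while the gain is $\geq 0$); if $b \notin S$ then the gain from $a$ alone is exactly $1$ and the loss is at most $1$. Hence the sum $|\bar B \cap T| + |B \cap S|$ is non-decreasing along the iteration. For termination, note that $a, b \in B \,\Delta\, B_1$ forces $|B' \,\Delta\, B_1| = |B \,\Delta\, B_1| - 2$, so the process must eventually stop, and it stops precisely when $|B \cap S| = r_M(S)$, i.e.\ when $B \in \B^+$. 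With this correction the rest of your argument goes through unchanged.
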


\section{Tropical Wick Relations}\label{sectropwick}

We now turn to the study of the tropical prevariety and tropical variety defined by the Wick relations. We first start with a very brief introduction to some of the basic concepts in tropical geometry.

The field of \textbf{Puiseux series} on the variable $t$ over the complex numbers is the algebraically closed field $\puiseux := \bigcup_{n=1}^{\infty} \mathbb{C} ((t^{ \frac{1}{n} }))$ whose elements are formal power series of the form $f = \sum_{k=k_0}^{+ \infty} c_k \cdot t^{\frac{k}{N}}$, where $N$ is a positive integer, $k_0$ is any integer, and the coefficients $c_k$ are complex numbers. The field $\puiseux$ comes equipped with a valuation $\val : \puiseux \to \mathbb{Q} \cup \{ \infty \}$ that makes it a valuated field, where $\val(f)$ is the least exponent $r$ such that the coefficient of $t^r$ in $f$ is nonzero (so $\val(0) = \infty$). If $Y \subseteq \puiseux^n$, we define its \textbf{valuation} to be the set
\[
\val(Y) := \left\{ \left(\val(y_1), \val(y_2), \dotsc, \val(y_n)\right) \in \left( \mathbb{Q} \cup \infty \right)^n : (y_1, y_2, \dotsc, y_n) \in Y \right\}.
\]

Now, denote by $\T := (\R \cup \infty, \oplus , \odot )$ the \textbf{tropical semiring} of real numbers with $\infty$ together with the binary operations \textbf{tropical addition} $\oplus$ and \textbf{tropical multiplication} $\odot$, defined as $x \oplus y = \min(x,y)$ and $x \odot y = x+y$. Given a multivariate polynomial
\[
P = \sum_{a_1, a_2, \dotsc, a_n} f_{a_1, a_2, \dotsc, a_n} \cdot X_1^{a_1} \cdot X_2^{a_2} \cdot \dotsb \cdot X_n^{a_n} \in \puiseux[X_1,X_2, \dotsc,X_n],
\] 
we define its \textbf{tropicalization} to be the ``tropical polynomial'' obtained by substituting the operations in $P$ by their tropical counterpart and the coefficients by their corresponding valuations, i.e.,
\[
\trop(P) := \bigoplus_{a_1, a_2, \dotsc, a_n} \val(f_{a_1, a_2, \dotsc, a_n}) \odot x_1^{a_1} \odot x_2^{a_2} \odot \dotsb \odot x_n^{a_n}
\]
(here exponentiation should be understood as repeated application of tropical multiplication). Given any subset $I \subseteq \puiseux[X_1,X_2, \dotsc,X_n]$, we define its tropicalization to be the set of tropical polynomials
\[
\trop(I) := \{ \trop(P): P \in I\}.
\]

The notion of ``zero set'' for a tropical polynomial is defined as follows. A tropical polynomial $p$ in $n$ variables is the tropical sum (or minimum) of tropical monomials 
\[
p = \bigoplus_{a_1, a_2, \dotsc, a_n} v_{a_1, a_2, \dotsc, a_n} \odot x_1^{a_1} \odot x_2^{a_2} \odot \dotsb \odot x_n^{a_n},
\]
where the coefficients $v_{a_1, a_2, \dotsc, a_n}$ are elements of $\T$, and only finitely many of them are not equal to $\infty$. The \textbf{tropical hypersurface} $\Trop(p) \subseteq \T^n$ is then defined as the set of points $(x_1, x_2, \dotsc, x_n) \in \T^n$ such that this minimum is attained in at least two different terms of $p$ (or it is equal to $\infty$). For example, the tropical hypersurface defined by the tropical polynomial $p= 1 \odot x \oplus 0 \odot y^2 \oplus (-2)$ is the set of points in $\T^2$ where the minimum $\min (1+x,2y,-2)$ is achieved at least twice (see Figure \ref{figtropical}).
\begin{figure}[htbp]
\begin{center}
\includegraphics[scale=0.8]{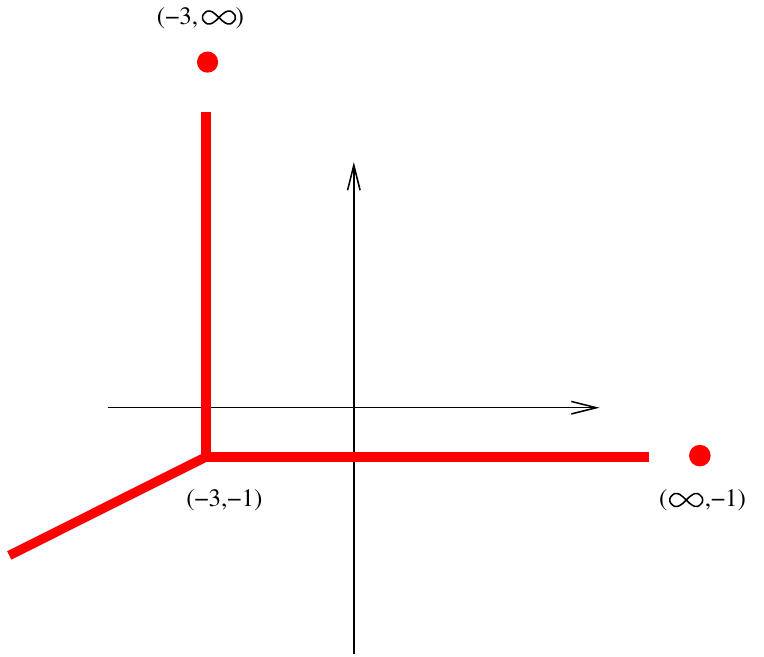}
\end{center}
\caption{A tropical hypersurface}
\label{figtropical}
\end{figure}

If $T$ is a set of tropical polynomials in $n$ variables, the \textbf{tropical prevariety} described by them is $\Trop(T) := \bigcap_{p \in T} \Trop(p)$. If $I \subseteq \puiseux[X_1,X_2, \dotsc,X_n]$ is an ideal then the tropical prevariety $\Trop(\trop(I))$ is called a \textbf{tropical variety}. If the ideal $I$ is generated by some set of polynomials $S \subseteq I$, it is \emph{not} necessarily true that the tropical variety defined by $I$ is equal to the tropical prevariety defined by $S$, not even if we impose the condition that $S$ be a universal Gr\"obner basis for $I$. When it does happen that $\Trop(\trop(I))= \Trop(\trop(S))$ we say that $S$ is a \textbf{tropical basis} for $I$. The notion of tropical basis is very subtle, and it is general very hard (both theoretically and computationally) to determine if a given set of generators forms such a basis. For an excellent example illustrating these difficulties, the reader is invited to see \cite{chan}. 

The Fundamental Theorem of Tropical Geometry establishes the connection between the ``algebraic tropicalization'' of an ideal and the ``geometric tropicalization'' of its corresponding variety. A proof of it can be found in \cite{diane}.
\begin{theorem}[Fundamental Theorem of Tropical Algebraic Geometry]\label{fttag}
Let $I$ be an ideal of $\puiseux[X_1,X_2, \dotsc,X_n]$ and $X := V(I) \subseteq \puiseux^n$ its associated algebraic set. Then
\[
\Trop(\trop(I)) \cap (\mathbb{Q} \cup \infty)^n = \val(X).
\]
Moreover, if $I$ is a prime ideal then $\Trop(\trop(I)) \cap \mathbb{R}^n$ is a pure connected polyhedral complex of the same dimension as the irreducible variety $X$. 
\end{theorem}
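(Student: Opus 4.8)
The plan is to establish the set-theoretic equality first and then extract the structural statement, importing classical facts about algebraic varieties through the dictionary of initial degenerations. I split the equality into two inclusions.

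\textbf{The easy inclusion} $\val(X) \subseteq \Trop(\trop(I))$: Let $y \in X$ and $P = \sum_a f_a X^a \in I$, and set $w = \val(y)$. The quantity $\val(f_a) + a \cdot w$ is exactly the value at $w$ of the $a$-th tropical monomial of $\trop(P)$. Since $P(y) = 0$, the term of least valuation in $P(y)$ cannot be the unique minimal one, so the minimum defining $\trop(P)$ is attained at least twice at $w$, i.e. $w \in \Trop(\trop(P))$. When some $y_i = 0$, so $w_i = \infty$, I would instead restrict to the coordinate subspace carrying the nonzero coordinates of $y$ and argue there. Intersecting over all $P \in I$ gives $w \in \Trop(\trop(I))$.

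\textbf{The hard inclusion} $\Trop(\trop(I)) \cap (\mathbb{Q} \cup \infty)^n \subseteq \val(X)$: Since $\val : \puiseux \to \mathbb{Q} \cup \infty$ is surjective, no enlargement of the value group is needed. First I would reduce to a finite weight vector $w \in \mathbb{Q}^n$: if $w$ has entries equal to $\infty$, replace $I$ by the saturation/elimination ideal describing the closure of $X$ inside the corresponding coordinate subspace, so that a lift of the finite part of $w$ yields a point of $X$ with the prescribed $\infty$ coordinates. For finite $w$ the argument has two stages. Stage one is Kapranov's theorem for a single hypersurface: if $w \in \Trop(\trop(f))$ then $V(f)$ contains a point $y$ with $\val(y) = w$. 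One builds $y$ via the substitution $y_i = t^{w_i}(c_i^{(0)} + c_i^{(1)} t^{\epsilon_1} + \dotsb)$: the hypothesis says precisely that the initial form $\mathrm{in}_w(f)$ has at least two monomials, so its zero locus in the torus $(\mathbb{C}^*)^n$ is nonempty because $\mathbb{C}$ is algebraically closed, which fixes the leading coefficients $c_i^{(0)}$; one then solves recursively for the higher coefficients, invoking algebraic closedness at each step and tracking which new exponents $\epsilon_j$ are forced. Stage two promotes this to a general ideal: one shows that $w \in \Trop(\trop(I))$ forces the initial ideal $\mathrm{in}_w(I)$ over $\mathbb{C}$ to contain no monomial, deduces from the classical Nullstellensatz that $V(\mathrm{in}_w(I)) \cap (\mathbb{C}^*)^n \neq \emptyset$, and lifts a point of this initial degeneration to a point $y \in X$ with $\val(y) = w$.

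\textbf{The structural statement and the main obstacle}: Assuming $I$ prime and writing $d = \dim V(I)$, the set $\Trop(\trop(I)) \cap \mathbb{R}^n$ is the support of the Gr\"obner complex of $I$, whose cells are the loci where $\mathrm{in}_w(I)$ is constant. Flatness of Gr\"obner degenerations gives $\dim V(\mathrm{in}_w(I)) = d$ for all $w$, and a Noether-normalization argument (projecting onto a coordinate $d$-plane) forces every maximal cell to have dimension $d$; this is purity. Connectedness follows from the fact that the tropical variety of an irreducible ideal is connected through codimension one, which can be reduced, via generic projection to three variables and curve sections, to the connectedness of classical irreducible varieties; alternatively one appeals to the balancing condition on the weighted polyhedral complex $\Trop(\trop(I))$. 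The main obstacle throughout is the lifting step of the hard inclusion: turning the purely combinatorial non-vanishing of $\mathrm{in}_w(I)$ into a genuine point of $X$ over $\puiseux$ of valuation exactly $w$, which requires controlling the successive Puiseux approximations so that they converge with exponents of bounded denominator; this rests on flatness of the degeneration together with a Hensel/completeness argument. Everything else is either formal or a transcription of classical algebraic geometry through the initial-degeneration dictionary.
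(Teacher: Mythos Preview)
The paper does not prove this theorem; it is stated as background and attributed to the literature with the remark ``A proof of it can be found in \cite{diane}.'' So there is no paper proof to compare your proposal against.

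That said, your outline is essentially the standard argument one finds in the cited source (the Maclagan--Sturmfels development): the easy inclusion via the nonarchimedean triangle inequality, the hard inclusion by reducing to the torus case, characterizing the tropical variety as the locus where the initial ideal $\mathrm{in}_w(I)$ contains no monomial, and then lifting a $\mathbb{C}^*$-point of $V(\mathrm{in}_w(I))$ to a Puiseux point of $X$ with prescribed valuation. Your identification of the lifting step as the crux is correct. The structural assertions (purity via the Bieri--Groves argument and connectedness through codimension one) are likewise the standard route. One small caution: your reduction for coordinates equal to $\infty$ by passing to an elimination/saturation ideal is the right idea, but one has to be careful that the point obtained on the coordinate subspace actually lies in the closure of $X$ rather than merely in $V(I \cap \puiseux[X_j : j \notin S])$; this is handled in the references by working with the extended tropicalization or by a separate argument for boundary strata, and is worth flagging if you ever write this out in full.
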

In this way, tropical geometry allows us to get information about the variety $X$ just by studying the combinatorially defined polyhedral complex $\Trop(\trop(I))$. This approach has been very fruitful in many cases, and has led to many beautiful results. The reader is invited to consult \cite{diane} for a much more detailed introduction to tropical geometry.

\vspace{2mm}

We now focus our attention on the Wick relations and the ideal generated by them. We denote by $\sn$ the collection of subsets of the set $[n]:=\{1,2, \dotsc, n\}$. If $S \in \sn$ and $a \in [n]$, we write $Sa$, $S-a$, and $S \Delta a$ instead of $S \cup \{a\}$, $S \setminus \{a\}$, and $S \Delta \{a\}$, respectively.

\begin{definition}
A vector $p=(p_S) \in \T^{\sn}$ is called a \textbf{tropical Wick vector} if it satisfies the tropical Wick relations, that is, for all $S , T \in \sn$ the minimum
\begin{equation}\label{tropwick}
\min_{i \in S \Delta T} (p_{S \Delta i} + p_{T \Delta i})  
\end{equation} 
is achieved at least twice (or it is equal to $\infty$). The \textbf{$\Delta$-Dressian} $\DDr(n) \subseteq \T^{\sn}$ is the space of all tropical Wick vectors in $\T^{\sn}$, i.e., the tropical prevariety defined by the Wick relations.
\end{definition}
Tropical Wick vectors have also been studied in the literature under the name of \textbf{valuated $\Delta$-matroids} (see \cite{dwvaldelta}), and in a more general setup under the name of M-convex functions on jump systems (see \cite{murotaconvex}). 

The following definition will be central to our study, and it is the reason why working over $\R \cup \infty$ and not just $\R$ is fundamental for us.
\begin{definition}
The \textbf{support} of a vector $p=(p_S) \in \T^{\sn}$ is the collection
\[
\supp(p) := \{ S \subseteq [n] : p_S \neq \infty \}.
\]
\end{definition}
We will later see (Theorem \ref{local}) that the support of any tropical Wick vector consists of subsets whose cardinalities have all the same parity, so the $\Delta$-Dressian decomposes as the disjoint union of two tropical prevarieties: the \textbf{even $\Delta$-Dressian} $\DDr^+(n) \subseteq \T^{\sn}$ (consisting of all tropical Wick vectors whose support has only subsets of even cardinality) and the \textbf{odd $\Delta$-Dressian} $\DDr^-(n) \subseteq \T^{\sn}$ (defined analogously). 

One of the main advantages of allowing our vectors to have $\infty$ entries is that tropical Wick vectors can be seen as a generalization of tropical Pl\"ucker vectors (or valuated matroids), as explained below. 
\begin{definition}
A tropical Wick vector $p=(p_S) \in \T^{\sn}$ is called a \textbf{tropical Pl\"ucker vector} (or a \textbf{valuated matroid}) if all the subsets in $\supp(p)$ have the same cardinality $r_p$, called the \textbf{rank} of $p$. The name is justified by noting that in this case, the tropical Wick relations become just the tropical Pl\"ucker relations: For all $S , T \in \sn$ such that $|S| = r_p-1$ and $|T|= r_p+1$, the minimum
\begin{equation}\label{tropplucker}
\min_{i \in T \setminus S} (p_{Si} + p_{T- i})  
\end{equation} 
is achieved at least twice (or it is equal to $\infty$). The space of tropical Pl\"ucker vectors of rank $k$ is called the \textbf{Dressian} $\Dr(k,n)$; it is the tropical prevariety defined by the Pl\"ucker relations of rank $k$.
\end{definition}
Tropical Pl\"ucker vectors play a central role in the combinatorial study of tropical linear spaces done by Speyer (see \cite{speyer}). In his paper he only deals with tropical Pl\"ucker vectors whose support is the collection of all subsets of $[n]$ of some fixed size $k$; we will later see that our definition is the ``correct'' generalization to more general supports.

\begin{definition}
The \textbf{tropical pure spinor space} $\TSpin^{\pm}(n) \subseteq \T^{\sn}$ is the tropicalization of the space of pure spinors, i.e., it is the tropical variety defined by the ideal generated by all Wick relations. A tropical Wick vector in the tropical pure spinor space is said to be \textbf{realizable}. The decomposition of the $\Delta$-Dressian into its even an odd parts induces a decomposition of the tropical pure spinor space as the disjoint union of two ``isomorphic'' tropical varieties $\TSpin^+(n)$ and $\TSpin^-(n)$, namely, the tropicalization of the spinor varieties $\Spin ^+(n)$ and $\Spin ^-(n)$ described in Section \ref{secspinor}. The tropicalization $\TSpin^+(n) \subseteq \T^{\sn}$ of the even part $\Spin ^+(n)$ will be called the \textbf{tropical spinor variety}.
\end{definition} 

By definition, we have that the tropical pure spinor space $\TSpin^{\pm}(n)$ is contained in the $\Delta$-Dressian $\DDr(n)$. A first step in studying representability of tropical Wick vectors (i.e. valuated $\Delta$-matroids) is to determine when these two spaces are the same, or equivalently, when the Wick relations form a tropical basis. Our main result in this section answers this question for almost all values of $n$.
\begin{theorem}\label{tropbasis}
If $n \leq 5$ then the tropical pure spinor space $\TSpin^{\pm}(n)$ is equal to the $\Delta$-Dressian $\DDr(n)$, i.e., the Wick relations form a tropical basis for the ideal they generate. If $n \geq 7$ then $\TSpin^{\pm}(n)$ is strictly smaller than $\DDr(n)$; in fact, there is a vector in the even $\Delta$-Dressian $\DDr^+(n)$ whose support consists of all even-sized subsets of $[n]$ which is not in the tropical spinor variety $\TSpin^+(n)$.
\end{theorem}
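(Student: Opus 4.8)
The plan is to split the theorem into its two halves, which require genuinely different techniques, and then to reduce the statement for larger $n$ to the two critical cases $n=5$ (equality holds) and $n=7$ (equality fails) by a localization/lifting argument.

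\emph{The positive direction ($n \leq 5$).} First I would dispose of the trivial ranges: for $n \leq 2$ there are essentially no nontrivial Wick relations, and for $n=3$ the $\Delta$-Dressian is small enough to analyze by hand (and the spinor variety $\Spin^+(3)$ is already all of $\mathbb{P}^{3}$, essentially a $G(2,4)$). For $n=4$ and $n=5$ I would argue computationally: the Wick ideal is explicit and its tropical variety can be computed (e.g. using \texttt{Gfan}), and one checks that the resulting polyhedral complex coincides with the tropical prevariety $\DDr(n)$ cut out by the $4$-term Wick relations. Concretely, given $p \in \DDr(n)$, one lifts $p$ to an actual isotropic subspace over $\puiseux$: when $\supp(p)$ is the set of all even subsets this is classical, and in general one uses Theorem \ref{wickmatroid} to see that $\supp(p)$ is an even $\Delta$-matroid and induces a polytope subdivision, then lifts cell by cell, invoking that every even $\Delta$-matroid on $\leq 5$ elements is representable (this is exactly the content recorded in the website referenced in the introduction, and is a finite check). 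The Fundamental Theorem \ref{fttag} then places $p$ in $\TSpin^{\pm}(n)$.

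\emph{The negative direction ($n \geq 7$).} Here the heart of the matter is $n=7$: I would construct an explicit vector $p \in \DDr^+(7)$ whose support is all $2^6 = 64$ even subsets of $[7]$ (equivalently, all $32$ even subsets of each parity class, but living in $\mathcal{P}(7)$) that is not realizable. The natural source of such a vector is a non-realizable valuated matroid (tropical Pl\"ucker vector) — the classical obstruction being the tropicalization of a non-Fano or non-Pappus type configuration, i.e.\ a point of $\Dr(3,7)$ outside $\trop(G(3,7))$, which is known to exist. One embeds a tropical Pl\"ucker vector $p'$ of rank $3$ on $[7]$ into $\T^{\mathcal{P}(7)}$ via the inclusion of Grassmannians into the spinor variety (sending $W$ to $W \times W^\perp$, so on the tropical side padding with $\infty$ outside the rank-$3$ locus), and then checks two things: (i) this embedded vector, possibly after a small generic perturbation to spread the support over \emph{all} even subsets while staying in the $\Delta$-Dressian, still satisfies all tropical Wick relations; and (ii) it cannot lie in $\TSpin^+(7)$, because realizability in the spinor variety would, by taking an appropriate initial degeneration / coordinate projection, force realizability of the underlying valuated matroid in $G(3,7)$, a contradiction. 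Step (i) is where one must be careful: perturbing the support to be all of the even subsets without leaving $\DDr^+$ requires exhibiting a genuine even $\Delta$-matroid polytope subdivision of the full hypersimplex-type polytope whose cells encode $p'$, which I would do using Theorem \ref{wickmatroid}.

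\emph{From $n=7$ (resp.\ $n=5$) to all larger (resp.\ smaller) $n$.} For the negative direction with $n \geq 8$, I would take the bad vector $p^{(7)} \in \DDr^+(7)$ and extend it to $[n]$ by a "free coloop" type construction — adjoining elements $8, 9, \dotsc, n$ so that the new even $\Delta$-matroid polytope is a prism/join over the old one — and check that the extension stays in $\DDr^+(n)$ with full even support but remains non-realizable (a realization would restrict, via a minor operation as in Section \ref{secrank}, to a realization of $p^{(7)}$). I expect the main obstacle to be precisely this negative direction at $n=7$: verifying that the candidate vector genuinely satisfies \emph{all} tropical Wick relations (not just the $4$-term ones) while simultaneously having full even support and provably failing realizability — balancing these three requirements is delicate, and is essentially why the theorem leaves the case $n=6$ open.
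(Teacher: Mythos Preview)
For $n \leq 5$ your plan and the paper's essentially agree: both reduce to a finite \texttt{Gfan} computation, stratified by the possible even $\Delta$-matroid supports (there are $35$ up to isomorphism). Your additional lifting-cell-by-cell description is not how the paper proceeds; it simply checks, for each fixed support, that the two fans coincide.

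For $n \geq 7$ your approach diverges from the paper's and carries a genuine gap. You propose to start from a non-realizable point of $\Dr(3,7)$, embed it into $\DDr^+(7)$, and then \emph{perturb} so that the support spreads to all even subsets. The perturbation step is precisely where your argument is incomplete: you concede it is ``where one must be careful'' but give no construction, and there is no a priori reason a generic perturbation of a rank-$3$ valuated matroid should stay in the $\Delta$-Dressian once its support is enlarged, nor that non-realizability survives the perturbation. The paper sidesteps this entirely with a direct construction: for \emph{any} even $\Delta$-matroid $M$ on $[n]$ with rank function $r_M$, the vector $p_T := -r_M(\bar T)$ (for $|T|$ even, $\infty$ otherwise) is shown to be a tropical Wick vector --- automatically with full even support --- by using Proposition~\ref{rankminors} to reduce the $4$-term relations to a check on even $\Delta$-matroids over at most $4$ elements. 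Taking $M$ to have the Fano matroid as a direct summand, the face of $\D_p$ minimizing $(0,\dotsc,0,1)$ is the polytope of $M$ itself; Lemma~\ref{reppieces} then shows that realizability of $p$ would force $M$ to be representable over $\mathbb{C}$, a contradiction. This argument works uniformly for every $n \geq 7$, so your separate free-coloop extension from $n=7$ to larger $n$ is also unnecessary. In short, the rank-function vector \emph{is} the clean, canonical realization of the ``perturbation to full support'' you were looking for.
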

As a corollary, we get the following result about representability of even $\Delta$-matroids.
\begin{corollary}\label{cororep}
Let $M$ be an even $\Delta$-matroid on a ground set of at most $5$ elements. Then $M$ is a representable even $\Delta$-matroid over any algebraically closed field of characteristic $0$.
\end{corollary}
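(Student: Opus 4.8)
The plan is to derive the corollary from the tropical-basis statement of Theorem \ref{tropbasis} together with the polytopal description of tropical Wick vectors in Theorem \ref{wickmatroid}. After relabelling the ground set, let $M = ([n], \B)$ be an even $\Delta$-matroid with $n \le 5$, and consider the vector $p^M \in \T^{\sn}$ with $p^M_S = 0$ for $S \in \B$ and $p^M_S = \infty$ otherwise. The first step is to check that $p^M$ is a tropical Wick vector. Since the finite coordinates of $p^M$ are all equal, the regular subdivision it induces is the trivial subdivision of $\Gamma_{\B}$: the only lower face of $\convex\{(e_S,0) : S \in \B\}$ is $\Gamma_{\B} \times \{0\}$ itself. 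By Theorem \ref{ggms}, $\Gamma_{\B}$ is an even $\Delta$-matroid polytope, so this trivial subdivision is a subdivision of an even $\Delta$-matroid polytope into even $\Delta$-matroid polytopes, and Theorem \ref{wickmatroid} gives $p^M \in \DDr(n)$.

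Next, because $n \le 5$, Theorem \ref{tropbasis} yields $\DDr(n) = \TSpin^{\pm}(n)$, so $p^M$ lies in the tropical pure spinor space, which is by definition the tropicalization of the ideal generated by the Wick relations. As $p^M$ has coordinates in $\{0,\infty\} \subseteq \mathbb{Q} \cup \{\infty\}$, the Fundamental Theorem of Tropical Algebraic Geometry (Theorem \ref{fttag}) produces a point $w$ of the space of pure spinors over $\puiseux$ with $\val(w) = p^M$. By the correspondence recalled in Section \ref{secspinor}, $w$ is the vector of Wick coordinates of some $n$-dimensional isotropic subspace $U \subseteq \puiseux^{\J}$, and
\[
\supp(w) = \{ S : \val(w_S) \neq \infty \} = \{ S : p^M_S \neq \infty \} = \B,
\]
so the even $\Delta$-matroid $M(U)$ represented by $U$ equals $M$. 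Thus $M$ is representable over the field $\puiseux$ of Puiseux series, which has characteristic $0$.

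The last step is to transfer representability from $\puiseux$ to an arbitrary algebraically closed field $K$ of characteristic $0$, which is a standard Lefschetz-principle argument. Representability of $M$ over a field $k$ is equivalent to the existence of a $k$-point of the locally closed subscheme
\[
Z_M := \{ w \in \Spin^{\pm}(n) : w_S \neq 0 \text{ for } S \in \B \text{ and } w_S = 0 \text{ for } S \notin \B \}
\]
of $\mathbb{P}^{\sn - 1}$, which is cut out by the Wick relations and the equations $w_S = 0$ ($S \notin \B$) together with the open conditions $w_S \neq 0$ ($S \in \B$); all of these have integer coefficients, so $Z_M$ is defined over $\mathbb{Q}$. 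Having a $\puiseux$-point, $Z_M$ is a nonempty scheme, hence has a point over $\overline{\mathbb{Q}}$ by the Nullstellensatz, and therefore a point over every algebraically closed field containing $\overline{\mathbb{Q}}$, i.e. over every algebraically closed field of characteristic $0$. This gives representability over $K$.

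The only substantial input here is Theorem \ref{tropbasis} itself; once it is in hand, the deduction is routine. The minor points one should be careful about are that $p^M$ really does induce the trivial subdivision (so that Theorem \ref{wickmatroid} applies directly), and that the descent to an arbitrary $K$ uses only that $\puiseux$ has characteristic $0$ rather than any special structure of Puiseux series.
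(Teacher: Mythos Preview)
Your argument is correct, and it differs from the paper's own proof in a useful way. The paper argues by contrapositive: it invokes the rank-function construction from the $n\ge 7$ part of Theorem~\ref{tropbasis} together with Lemma~\ref{reppieces} to show that a non-representable even $\Delta$-matroid on $[n]$ would produce a tropical Wick vector in $\DDr(n)\setminus\TSpin^{\pm}(n)$, contradicting the $n\le 5$ case of Theorem~\ref{tropbasis}; it then concludes representability over $\mathbb{C}$ and transfers via the first-order nature of representability. Your route is more direct: you feed the $0/\infty$ vector $p^M$ straight into Theorem~\ref{wickmatroid} and Theorem~\ref{tropbasis}, then apply the Fundamental Theorem (Theorem~\ref{fttag}) to lift to a genuine Wick vector over $\puiseux$ with the correct support. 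This avoids Lemma~\ref{reppieces} and the rank-function detour entirely, using only the \emph{statement} of Theorem~\ref{tropbasis} rather than its proof machinery; the price is that you land in $\puiseux$ rather than $\mathbb{C}$, so the final transfer step is slightly less immediate, but your Lefschetz-principle argument via the $\mathbb{Q}$-scheme $Z_M$ handles that cleanly (the phrase ``by the Nullstellensatz'' is a bit loose --- what you are really using is that a nonempty scheme of finite type over $\mathbb{Q}$ has a $\overline{\mathbb{Q}}$-point --- but the content is right).
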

We will postpone the proof of Theorem \ref{tropbasis} and Corollary \ref{cororep} until Section \ref{secsubdiv}, after we have studied some of the combinatorial properties of tropical Wick vectors. To show that the tropical pure spinor space and the $\Delta$-Dressian agree when $n\leq 5$ we will make use of Anders Jensen's software Gfan \cite{gfan}. It is still unclear what happens when $n=6$. In this case, the spinor variety is described by 76 nontrivial Wick relations (60 of which are 4-term Wick relations) on 32 variables, and a Gfan computation requires a long time to finish. We state the following conjecture.
\begin{conjecture}\label{conj6}
The tropical pure spinor space $\TSpin^{\pm}(6)$ is equal to the $\Delta$-Dressian $\DDr(6)$.
\end{conjecture}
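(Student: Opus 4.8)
Since this statement is a conjecture, what follows is the strategy I would pursue to establish it. The equality to be proven is between the tropical prevariety $\DDr(6)$ and the tropical variety $\TSpin^{\pm}(6)$; the inclusion $\TSpin^{\pm}(6) \subseteq \DDr(6)$ holds by definition, so the whole content is the reverse inclusion, i.e.\ that every tropical Wick vector in $\T^{\sn}$ for $n=6$ is realizable. The first reductions are formal: using the decomposition $\DDr(6) = \DDr^+(6) \sqcup \DDr^-(6)$ of Theorem~\ref{local} together with the isomorphism $\Spin^+(6) \cong \Spin^-(6)$, it suffices to prove $\DDr^+(6) = \TSpin^+(6)$; and using the action of the Weyl group $W(D_6)$ of order $2^5 \cdot 6!$ on $\T^{\sn}$ (which preserves both spaces) together with the characterization of tropical Wick vectors by $\Delta$-matroid polytope subdivisions in Theorem~\ref{wickmatroid}, it suffices to realize one representative of each $W(D_6)$-orbit of regular $\Delta$-matroid subdivisions of a $\Delta$-matroid polytope in $\R^6$. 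One can then stratify by support: a tropical Wick vector whose support is a \emph{proper} even $\Delta$-matroid of $[6]$ should be attacked by restricting to smaller ground sets, so the essential case is that of full even support, i.e.\ valuated $\Delta$-matroids living on the half-cube polytope $\convex\{e_S : S \subseteq [6],\ |S|\text{ even}\}$.

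Before the main step I would clear the constant-rank strata. If every set in $\supp(p)$ has the same size $k$, then $p$ is a tropical Pl\"ucker vector and the corresponding slice of $\TSpin^+(6)$ is the tropical Grassmannian $\trop\Gr(k,6)$. For $k \in \{0,1,2,4,5,6\}$ the equality $\Dr(k,6) = \trop\Gr(k,6)$ is elementary (and is preserved under the duality $\Dr(k,6) = \Dr(6-k,6)$), and for $k = 3$ it is a result of Speyer and Sturmfels that $\Dr(3,6) = \trop\Gr(3,6)$. Hence all constant-rank points of $\DDr^+(6)$ are realizable. This does not prove the conjecture — the genuinely $\Delta$-matroidal behaviour, where bases of different sizes coexist, is exactly what is new in type~$D$ — but it confirms that any obstruction to realizability at $n=6$ would be a new type-$D$ phenomenon, not one imported from a Grassmannian, and it cuts down the list of cones needing separate attention.

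The main step is then a finite check. I would first try to show that the $60$ four-term Wick relations alone cut out $\DDr(6)$ as a tropical prevariety, reducing the $76$ defining equations to $60$; this is the kind of statement that the combinatorial machinery around Theorem~\ref{wickmatroid} should deliver. With that in hand, $\DDr^+(6)$ is an explicit intersection of tropical hypersurfaces, and one compares it with $\TSpin^+(6)$ cone by cone. There are two concrete ways to do the comparison. The computational route is to traverse the tropical variety of the spinor ideal with Gfan~\cite{gfan}, using the $W(D_6)$-symmetry to visit only orbit representatives of maximal cones, and verify that every maximal cone of $\DDr^+(6)$ already occurs; by Theorem~\ref{fttag} this is precisely the condition for the two to coincide. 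The hands-on route is to enumerate the $W(D_6)$-orbits of regular $\Delta$-matroid subdivisions of the $6$-dimensional half-cube and, for each, exhibit a family $U_t$ of isotropic subspaces over $\puiseux$ whose vector of Wick coordinates has valuation in the relative interior of the corresponding cone; here one would build $U_t$ from representations of the individual $\Delta$-matroid cells of the subdivision (all cells are representable by Corollary~\ref{cororep}, once one also records representability of the finitely many even $\Delta$-matroids on $6$ elements) and glue them along a single parameter $t$.

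The main obstacle is precisely the gluing in that last step: a subdivision all of whose cells are realizable need not itself be realizable, and this is exactly how the conjecture fails for $n \geq 7$ in Theorem~\ref{tropbasis}. Consequently there is no inductive shortcut through minors — an argument of the form ``$p$ is realizable provided all of its single-element contractions and deletions are'' would prove the same statement for every $n$ and so cannot be correct — and one genuinely has to confront the whole fan for $n = 6$. Its size (the spinor ideal for $n=6$ has $76$ nontrivial generators on $32$ variables) is what has so far kept the Gfan computation from terminating, so the realistic bottleneck is algorithmic: either making the symmetric Gr\"obner-fan traversal finish, or carrying out the finite but sizeable orbit enumeration of half-cube subdivisions together with their explicit Puiseux realizations. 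I expect the second, more explicit route to be the more promising one.
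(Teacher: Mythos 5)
The paper does not prove this statement---it is stated, and left, as a conjecture, supported only by the analogous Gfan computations for $n\leq 5$---so there is no proof of it to compare yours against, and you are right to frame your text as a strategy rather than an argument. The strategy you outline is essentially the one the paper itself carries out for $n\leq 5$ and proposes for $n=6$: stratify $\DDr^{\pm}(n)$ by the even $\Delta$-matroid $\supp(p)$, reduce to orbit representatives under the symmetry group, and compare each stratum of the prevariety with the corresponding stratum of $\TSpin^{\pm}(n)$ computed by Gfan. Your reduction of the constant-cardinality strata to $\Dr(k,6)=\trop\Gr(k,6)$ (Speyer--Sturmfels for $k=3$) is correct and genuinely useful, as is your observation that no minor-based induction can close the argument, since it would contradict Theorem~\ref{tropbasis} for $n\geq 7$.

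Two concrete sub-claims in your sketch are false, however. First, the $60$ four-term Wick relations cannot cut out $\DDr(6)$ as a tropical prevariety: by Theorem~\ref{local} the support condition is an independent requirement, and the paper's own example (the vector equal to $0$ on $\{123,456\}$ and $\infty$ elsewhere) satisfies every $4$-term tropical Wick relation vacuously while failing the length-six relation obtained from $S=1234$, $T=56$, where the minimum is attained only at $i=4$. So that proposed reduction is a dead end, though your two main routes do not actually depend on it. Second, you misdescribe how the statement fails for $n\geq 7$: in Theorem~\ref{tropbasis} the obstruction is a non-realizable \emph{cell} of $\D_p$ (a Fano-type even $\Delta$-matroid appearing as a face, excluded by Lemma~\ref{reppieces}), not a configuration of realizable cells that fails to glue. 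Relatedly, a tropical Wick vector on $[6]$ with proper support does not in general restrict to a smaller ground set, so ``the essential case is full even support'' is unjustified; the paper's $n\leq 5$ verification in fact treats every isomorphism class of even $\Delta$-matroids separately. None of this changes the bottom line: what you describe is a plan for a large finite computation that has not been carried out, and the conjecture remains open.
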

If Conjecture \ref{conj6} is true, or even if $\TSpin^{\pm}(6)$ and $\DDr(6)$ agree just on all vectors having as support all even-sized subsets of $[n]$, we could extend the proof of Corollary \ref{cororep} to show that all even $\Delta$-matroids over a ground set of at most $6$ elements are representable over any algebraically closed field of characteristic $0$.

\section{Tropical Wick Vectors and Delta-Matroid Subdivisions}\label{secsubdiv}

In this section we provide a description of tropical Wick vectors in terms of polytopal subdivisions. It allows us to deal with tropical Wick vectors in a purely geometric way. We start with a useful local characterization, which was basically proved by Murota in \cite{murotaconvex}.
\begin{theorem}\label{local}
Suppose $p=(p_S) \in \T^{\sn}$ has nonempty support. Then $p$ is a tropical Wick vector if and only if the following two conditions are satisfied:
\begin{enumerate}
\item[(a)] The support $supp(p)$ of $p$ is the collection of bases of an even $\Delta$-matroid over $[n]$.
\item[(b)] The vector $p$ satisfies the 4-term tropical Wick relations: For all $S \in \sn$ and all $a,b,c,d \in [n] \setminus S$ distinct, the minima
\begin{equation}\label{min1}
\begin{split}
\min (p_{Sabcd} + p_{S} , p_{Sab} + p_{Scd}, p_{Sac} + p_{Sbd}, p_{Sad} + p_{Sbc}) \\
\min (p_{Sabc} + p_{Sd} , p_{Sabd} + p_{Sc}, p_{Sacd} + p_{Sb}, p_{Sbcd} + p_{Sa})
\end{split}
\end{equation}
are achieved at least twice (or are equal to $\infty$).
\end{enumerate}
\end{theorem}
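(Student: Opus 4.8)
The plan is to prove both directions via the general theory of tropical/initial forms, reducing the global Wick relations to the 4-term ones plus the support condition. For the ``only if'' direction, suppose $p$ is a tropical Wick vector. The 4-term tropical Wick relations in (b) are just special instances of the tropical Wick relations \eqref{tropwick} (taking $S,T$ with $|S \Delta T| = 4$), so condition (b) is immediate. For condition (a), I would first observe that if $\supp(p)$ were not the collection of bases of an even $\Delta$-matroid, then by Theorem \ref{ggms} the polytope $\Gamma_{\supp(p)}$ has an edge not of the form $\pm e_i \pm e_j$; one then extracts from such an edge a pair $S,T$ witnessing a failure of the symmetric exchange axiom, and checks that the corresponding tropical Wick relation \eqref{tropwick} has its minimum attained uniquely (the unique term being the one coming from the edge direction), a contradiction. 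Concretely, it suffices to verify that the ``combinatorial'' statement holds: a nonempty collection $\B \subseteq \sn$ whose indicator vectors, under every substitution assigning finite values on $\B$ and $\infty$ off $\B$, never produce a 4-term minimum attained once, must satisfy the even $\Delta$-matroid exchange axiom. This is essentially the $\infty$-coefficient specialization of Murota's characterization and should follow from Proposition \ref{parity} together with a short case analysis on $|S \Delta T| \in \{2,4\}$.

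For the ``if'' direction — the substantive one — assume (a) and (b); I must show every tropical Wick relation \eqref{tropwick} has its minimum attained at least twice. Fix $S, T \in \sn$ and set $k = |S \Delta T|$. If $k \le 2$ the relation is trivial (either there are no terms, or the single index $i$ makes the relation vacuous in the usual tropical sense, or it reduces to $2p_{S\cap T \cup \{\cdot\}}$ attained at two equal terms). For $k = 4$ the relation is exactly one of the two families in \eqref{min1}, modulo relabeling the alternating indices, so (b) handles it. The remaining work is an induction on $k \ge 6$: I would mimic the classical derivation of long Plücker relations from 3-term Plücker relations (as in Speyer's treatment), adapted to the $\Delta$-matroid setting. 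The idea is that a Wick relation indexed by $(S,T)$ with $|S \Delta T| = k$ can be written as a ``tropical quadratic combination'' of shorter Wick relations: one picks an element $a \in S \Delta T$, expands using the fundamental-circuit / exchange structure guaranteed by Proposition \ref{strong}, and rewrites \eqref{tropwick} so that each term is grouped according to a shorter relation whose minimum (by induction) is attained twice. One then argues that the global minimum of \eqref{tropwick} cannot be attained uniquely: the term achieving it would have to be the unique minimizer of one of the shorter relations appearing in the expansion, contradicting the inductive hypothesis. Making the bookkeeping of signs irrelevant (as the paper emphasizes, signs don't matter tropically) is what makes this cleaner than the algebraic identity.

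The key tool throughout is that, by Theorem \ref{ggms}, condition (a) lets me work geometrically: the support is a $\Delta$-matroid polytope, its faces are $\Delta$-matroid polytopes, and restricting $p$ to a face (i.e. to a minor, via Proposition \ref{rankminors} and the contraction/deletion operations) preserves conditions (a) and (b). So the induction on $k$ can be run ``inside'' a minor of corank/rank small enough that only 4-term relations and support combinatorics are involved. The main obstacle I anticipate is precisely the inductive step for $k \ge 6$: unlike the type-$A$ case, the two ``flavors'' of 4-term relation in \eqref{min1} (the even-split and odd-split Pfaffian relations) interact, and one must be careful that the element $a \in S \Delta T$ chosen to drive the induction produces genuinely shorter Wick relations of the allowed form rather than something degenerate. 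I expect this to require a careful choice of $a$ (guided by Proposition \ref{strong}, picking $a$ so that both $S \Delta \{a,b\}$ and $T \Delta \{a,b\}$ stay in the support for a suitable $b$), followed by a finite but somewhat intricate verification that the regrouped terms of \eqref{tropwick} are covered by the inductive hypothesis. An alternative, possibly cleaner route — which I would pursue if the direct induction becomes unwieldy — is to invoke the equivalence between tropical Wick vectors and M-convex functions on (constant-parity) jump systems (Murota, \cite{murotaconvex}) and quote the corresponding local-to-global theorem there; but I would prefer the self-contained inductive argument since it keeps everything within the polytopal framework developed in this section and sets up the subdivision description in Theorem \ref{wickmatroid}.
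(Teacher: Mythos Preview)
Your forward direction for (a) is overcomplicated and partly confused. You do not need Theorem~\ref{ggms} or any polytope-edge argument: given $A,B \in \supp(p)$ and $a \in A \Delta B$, the paper simply sets $S = A \Delta a$, $T = B \Delta a$ and observes that the tropical Wick relation \eqref{tropwick} for this pair has the term $i=a$ equal to $p_A + p_B < \infty$, hence the minimum is finite and attained twice, producing $b \neq a$ with both $A \Delta\{a,b\}$ and $B \Delta\{a,b\}$ in $\supp(p)$. That is the strong exchange axiom in one line. Your detour through ``every substitution assigning finite values on $\B$'' and the 4-term relations is not only roundabout but actually wrong as stated: the 4-term relations alone do \emph{not} force the support to be an even $\Delta$-matroid (the paper gives an explicit counterexample right after Corollary~\ref{3termplucker}); you need the full family of relations, which is exactly what the direct argument uses.

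For the reverse direction, the paper does \emph{not} carry out the induction on $|S \Delta T|$ that you propose as your primary route; it takes precisely your ``alternative'' route and cites Murota's local-to-global theorem for M-convex functions on jump systems \cite{murotaconvex}, restated as: if $\supp(p)$ is an even $\Delta$-matroid and for all $A,B \in \supp(p)$ with $|A \Delta B| = 4$ there exist distinct $a,b \in A \Delta B$ with $p_A + p_B \geq p_{A \Delta\{a,b\}} + p_{B \Delta\{a,b\}}$, then $p$ is a tropical Wick vector. Your inductive plan is more ambitious and would be a self-contained contribution if it worked, but as you yourself note, the ``tropical quadratic combination'' step is delicate (tropical semirings lack subtraction, so the classical syzygy argument for Pfaffian identities does not transfer automatically), and you have not actually specified the regrouping. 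If you want a self-contained proof, you would essentially be reproving Murota's theorem in this special case; the paper opts to cite it.
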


\begin{proof}
If $p$ is a tropical Wick vector then, by definition, $p$ satisfies the 4-term tropical Wick relations. To show that $\supp(p)$ is an even $\Delta$-matroid, suppose $A, B \in \supp(p)$ and $a \in A \Delta B$. Take $S=A \Delta a$ and $T = B \Delta a$. The minimum in equation (\ref{tropwick}) is then a finite number, since $p_{S \Delta a} + p_{T \Delta a} = p_{A} + p_{B}$ is finite. Therefore, this minimum is achieved at least twice, so there exists $b \in S \Delta T = A \Delta B$ such that $b \neq a$ and $p_{S \Delta b} + p_{T \Delta b} = p_{A \Delta \{a,b\}} + p_{B \Delta \{a,b\}}$. This implies that $A \Delta \{a,b\}$ and $B \Delta \{a,b\}$ are both in $\supp(p)$, which shows that $\supp(p)$ satisfies the strong exchange axiom for even $\Delta$-matroids.

The reverse implication is basically a reformulation of the following characterization given by Murota (done in greater generality for M-convex functions on jump systems; for details see \cite{murotaconvex}): If $\supp(p)$ is the collection of bases of an even $\Delta$-matroid over $[n]$ then $p$ is a tropical Wick vector if and only if for all $A, B \in \supp(p)$ such that $\left| A \Delta B \right| = 4$, there exist $a, b \in A \Delta B$ distinct such that $p_A + p_B \geq p_{A \Delta \{a,b\}} + p_{B \Delta \{a,b\}}$.
\end{proof}

As a corollary, we get the following local description of tropical Pl\"ucker vectors.
\begin{corollary}\label{3termplucker}
Suppose $p=(p_S) \in \T^{\sn}$ has nonempty support. Then $p$ is a tropical Pl\"ucker vector if and only if the following two conditions are satisfied:
\begin{enumerate}
\item[(a)] The support $supp(p)$ of $p$ is the collection of bases of matroid over $[n]$ (of rank $r_p$).
\item[(b)] The vector $p$ satisfies the  3-term tropical Pl\"ucker relations: For all $S \in \sn$ such that $|S|=r_p -2$ and all $a,b,c,d \in [n] \setminus S$ distinct, the minimum
\begin{equation*}
\min (p_{Sab} + p_{Scd}, p_{Sac} + p_{Sbd}, p_{Sad} + p_{Sbc})  
\end{equation*}
is achieved at least twice (or it is equal to $\infty$).
\end{enumerate}
\end{corollary}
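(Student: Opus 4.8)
The plan is to read the corollary off Theorem \ref{local} by specializing the even $\Delta$-matroid description to the equicardinal case and keeping track of which cardinalities can appear. By definition a tropical Pl\"ucker vector is exactly a tropical Wick vector whose support consists of subsets of a single fixed size $r_p$, so throughout I may invoke Theorem \ref{local} and then count.

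For the ``only if'' direction, suppose $p$ is a tropical Pl\"ucker vector. Theorem \ref{local}(a) says $\supp(p)$ is the collection of bases of an even $\Delta$-matroid over $[n]$; since these bases all have cardinality $r_p$, this even $\Delta$-matroid is a matroid (matroids being precisely the $\Delta$-matroids with equicardinal bases), which gives condition (a) of the corollary. For condition (b), take any 4-term tropical Wick relation of the first type in \eqref{min1} with $|S| = r_p - 2$. Then $|Sab| = |Sac| = |Sad| = |Sbc| = |Sbd| = |Scd| = r_p$, while $|S| = r_p - 2$ and $|Sabcd| = r_p + 2$, so $p_{Sabcd} + p_S = \infty$ and the relation reduces to the assertion that $\min(p_{Sab}+p_{Scd},\, p_{Sac}+p_{Sbd},\, p_{Sad}+p_{Sbc})$ is attained at least twice or equals $\infty$ --- precisely the 3-term tropical Pl\"ucker relation for $S$ and $a,b,c,d$.

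For the ``if'' direction, suppose conditions (a) and (b) of the corollary hold. A matroid, regarded via its bases, has all bases of the same cardinality, hence of the same parity, so by Proposition \ref{parity} it is an even $\Delta$-matroid; thus condition (a) of Theorem \ref{local} holds. For condition (b) of Theorem \ref{local} I would verify both families of 4-term Wick relations in \eqref{min1} by a size count against the equicardinal support. In the second family every monomial $p_{Sabc}+p_{Sd}$ (and its three companions) pairs two sets whose sizes differ by $2$, so at most one of the two sets has size $r_p$ and the monomial equals $\infty$; the relation is vacuous. In the first family $p_{Sabcd}+p_S$ pairs sizes differing by $4$ and is always $\infty$; if $|S|\neq r_p-2$ the remaining three monomials are $\infty$ too and the relation is trivial, while if $|S| = r_p - 2$ the relation is exactly a 3-term Pl\"ucker relation and holds by (b). Hence $p$ satisfies the hypotheses of Theorem \ref{local}, so it is a tropical Wick vector, and being equicardinal it is a tropical Pl\"ucker vector.

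The whole argument is bookkeeping with no real obstacle; the only point needing a little care is the cardinality analysis showing that, once the support is equicardinal, every 4-term Wick relation except those with $|S| = r_p - 2$ degenerates to a triviality, and that the surviving ones are exactly the 3-term Pl\"ucker relations.
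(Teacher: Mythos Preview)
Your proof is correct and follows exactly the paper's approach: specialize Theorem \ref{local} to the equicardinal case and observe that the 4-term Wick relations collapse to the 3-term Pl\"ucker relations. The paper's own proof is the one-sentence remark ``the 3-term tropical Pl\"ucker relations are just the 4-term tropical Wick relations in the case where all the subsets in $\supp(p)$ have the same cardinality,'' and your write-up is a careful expansion of precisely that.
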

\begin{proof}
The 3-term tropical Pl\"ucker relations are just the 4-term tropical Wick relations in the case where all the subsets in $\supp(p)$ have the same cardinality.
\end{proof}
Corollary \ref{3termplucker} shows that our notion of tropical Pl\"ucker vector is indeed a generalization of the one given by Speyer in \cite{speyer} to the case where $\supp(p)$ is not necessarily the collection of bases of a uniform matroid.

It is worth mentioning that the assumptions on the support of $p$ are essential in the local descriptions given above. As an example of this, consider the vector $p \in \T^{\mathcal{P}(6)}$ defined as
\[
p_I:=
\begin{cases}
0 & \text{if } I = 123 \text{ or } I=456, \\
\infty & \text{otherwise}.
\end{cases}
\]
The vector $p$ satisfies the 3-term tropical Pl\"ucker relations, but its support is not the collection of bases of a matroid and thus $p$ is not a tropical Pl\"ucker vector.

\begin{definition}
Given a vector $p=(p_S) \in \T^{\sn}$, denote by $\Gamma_p \subseteq \R^n$ its \textbf{associated polytope}
\[
\Gamma_p := \convex \{ e_S : S \in \supp(p) \}.
\]
The vector $p$ induces naturally a regular subdivision $\D_p$ of $\Gamma_p$ in the following way. Consider the vector $p$ as a height function on the vertices of $\Gamma_p$, so ``lift'' vertex $e_S$ of $\Gamma_p$ to height $p_S$ to obtain the \textbf{lifted polytope} $\Gamma'_p = \convex \{ (e_S,p_S) : S \in \supp(p) \} \subseteq \R^{n+1}$. The \textbf{lower faces} of $\Gamma'_p$ are the faces of $\Gamma'_p$ minimizing a linear form $(v,1) \in \R^{n+1}$; their projection back to $\R^n$ form the polytopal subdivision $\D_p$ of $\Gamma_p$, called the \textbf{regular subdivision induced by $p$}.
\end{definition}

We now come to the main result of this section. It describes tropical Wick vectors as the height vectors that induce ``nice'' polytopal subdivisions.
\begin{theorem}\label{wickmatroid}
Let $p=(p_S) \in \T^{\sn}$. Then $p$ is a tropical Wick vector if and only if the regular subdivision $\D_p$ induced by $p$ is an even $\Delta$-matroid subdivision, i.e., it is a subdivision of an even $\Delta$-matroid polytope into even $\Delta$-matroid polytopes. 
\end{theorem}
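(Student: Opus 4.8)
\textbf{Proof proposal for Theorem \ref{wickmatroid}.}

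The plan is to reduce the statement to the local characterization of Theorem \ref{local} together with the standard theory of regular subdivisions of $0/1$-polytopes, using the edge description of even $\Delta$-matroid polytopes in Theorem \ref{ggms}. Both directions hinge on the following principle: a regular subdivision $\D_p$ is an even $\Delta$-matroid subdivision precisely when (i) the supporting polytope $\Gamma_p$ is an even $\Delta$-matroid polytope, and (ii) every maximal cell of $\D_p$ is an even $\Delta$-matroid polytope. By Theorem \ref{ggms}, condition (i) is equivalent to $\supp(p)$ being the collection of bases of an even $\Delta$-matroid, which is exactly condition (a) of Theorem \ref{local}. So the real content is to show that, \emph{given} (a), condition (b) of Theorem \ref{local} (the $4$-term tropical Wick relations) is equivalent to every cell of $\D_p$ having only edges of the form $\pm e_i \pm e_j$.

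First I would treat the forward direction. Assume $p$ is a tropical Wick vector. Condition (a) gives that $\Gamma_p$ is an even $\Delta$-matroid polytope. For the cells, suppose some maximal cell $F$ of $\D_p$ had an edge $e_A - e_B$ with $|A \Delta B| \geq 3$ (the only other possibility once we know all vertices are $0/1$ and $\Gamma_p$ itself is a $\Delta$-matroid polytope). Choose $S = A \cap B$ and pick four indices realizing the symmetric difference; restricting attention to the $4$-term Wick relation supported on $S$ and those four indices, the fact that $e_A$ and $e_B$ span an edge of a lower face of $\Gamma_p'$ means the corresponding linear functional $(v,1)$ is minimized exactly on that edge, forcing the minimum in \eqref{min1} to be attained \emph{uniquely} at the term $p_A + p_B$ — contradicting that it is attained at least twice. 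Hence every edge of every cell has the form $\pm e_i \pm e_j$, and by Theorem \ref{ggms} applied cellwise, $\D_p$ is an even $\Delta$-matroid subdivision. (Here one must be slightly careful: Theorem \ref{ggms} characterizes polytopes all of whose vertices are $0/1$ vectors; since every vertex of a cell of $\D_p$ is a vertex of $\Gamma_p$, this holds automatically.)

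For the converse, assume $\D_p$ is an even $\Delta$-matroid subdivision. Then $\supp(p)$ is a collection of bases of an even $\Delta$-matroid (taking $\D_p$ to be the trivial subdivision if necessary, or reading it off the supporting polytope), giving condition (a). To get condition (b), fix $S$ and distinct $a,b,c,d \in [n]\setminus S$ and consider the (at most) four relevant vertices $e_{S}, e_{Sab}, e_{Sac}, \dots$ of $\Gamma_p$ (in the even case) lying in a common two-dimensional square face direction; the four points involved in a $4$-term Wick relation are the vertices of a square (a translate of the $\{0,1\}^2$ square in the $(a\!:\!b, c\!:\!d)$-type coordinates, resp. the octahedron-type configuration in the odd case). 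The regular subdivision restricted to the subpolytope spanned by the supported ones among these vertices is induced by the heights $p$; since $\D_p$ only uses edges $\pm e_i \pm e_j$, no cell of this restriction can contain a "long diagonal" of the square, which forces the minimum of the four height-sums in \eqref{min1} to be attained at least twice (the diagonal pairs of a square have equal coordinate-sum, and a regular subdivision of a square is either trivial or splits along a diagonal — the forbidden-edge condition rules out precisely the generic non-tie case). This is exactly the $4$-term tropical Wick relation, so by Theorem \ref{local}, $p$ is a tropical Wick vector.

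The main obstacle I expect is the careful bookkeeping in the converse: one must argue that the local geometry of $\D_p$ around each "square" (or octahedral) $2$-face of a $\Delta$-matroid polytope is governed \emph{only} by the six (or eight) relevant coordinates of $p$, independently of the rest, and that "no long diagonal appears as an edge of a cell" translates cleanly into "the min is attained twice" — including the degenerate cases where some of $p_{Sab}, \dots$ equal $\infty$, i.e. where the square degenerates to a triangle or an edge. Handling these degenerate support patterns (which is where condition (a) does real work, via Proposition \ref{strong} / the strong exchange axiom to guarantee that enough of these vertices are present) is the delicate point; everything else is a routine application of Theorem \ref{ggms} and the definition of a regular subdivision.
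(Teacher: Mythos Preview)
Your high-level strategy --- reduce to Theorem \ref{local} and control edges of cells via Theorem \ref{ggms} --- is exactly the paper's. But each direction, as written, has a real gap.

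\textbf{Forward direction.} You take a bad edge $e_A e_B$ of a cell and try to contradict a $4$-term Wick relation built from $S = A \cap B$ and ``four indices realizing the symmetric difference.'' This only works when $|A \Delta B| = 4$: if $|A \Delta B| \geq 6$ (nothing rules this out a priori for a cell of $\D_p$), then $p_A + p_B$ is not a term of any $4$-term relation on $S$ and four chosen indices, so no contradiction follows. The paper does not go contrapositive here. Given any cell selected by $(v,1)$ and vertices $e_A, e_B$ of it, the paper applies the \emph{general} tropical Wick relation to $S = A \Delta a$, $T = B \Delta a$ (legitimate because $p$ is assumed to be a tropical Wick vector, hence satisfies all Wick relations, not only the $4$-term ones), adds $\sum_{j\in S} v_j + \sum_{j\in T} v_j$ to translate into the shifted heights, and reads off the strong exchange axiom for the cell's vertex set directly. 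Your contrapositive can be repaired by the same device --- use the general relation instead of the $4$-term one --- but as stated it is incomplete.

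\textbf{Converse direction.} The configuration governed by a $4$-term Wick relation is not a square. In the even case there are up to eight relevant points $e_S, e_{Sabcd}, e_{Sab}, e_{Scd}, e_{Sac}, e_{Sbd}, e_{Sad}, e_{Sbc}$; those lying in $\supp(p)$ span a face of $\Gamma_p$, and when all eight are present this face is the $4$-demicube, a $4$-dimensional polytope. The odd case is isometric to this, not to an octahedron. Your ``a regular subdivision of a square is either trivial or splits along a diagonal'' analysis is a $2$-dimensional argument and does not apply. What does work --- and is what the paper does --- is: these points form a face of $\Gamma_p$; if the $4$-term minimum is attained uniquely at a pair $(X,Y)$, then the segment $e_X e_Y$ (which has $|X \Delta Y| = 4$) appears as an edge of some cell of $\D_p$, contradicting Theorem \ref{ggms}. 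Your underlying logic (unique minimum $\Rightarrow$ forbidden edge) is the correct one, and your caveat about the degenerate-support cases is well placed; only the geometric picture needs correcting.
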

\begin{proof}
Assume $p$ is a tropical Wick vector. By condition (a) in Theorem \ref{local}, we know that $\Gamma_p$ is an even $\Delta$-matroid polytope. Let $Q \subseteq \R^n$ be one of the polytopes in $\D_p$. By definition,  $Q$ is the projection back to $\R^n$ of the face of the lifted polytope $\Gamma'_p \subseteq \R^{n+1}$ minimizing some linear form $(v,1) \in \R^{n+1}$, and thus 
\begin{equation*}
\vertices(Q) = \left\{ e_R \in \{0,1\}^n: p_R + \sum_{j \in R} v_j \text{ is minimal} \right\}.  
\end{equation*}
To show that $Q$ is an even $\Delta$-matroid polytope, suppose $e_A$ and $e_B$ are vertices of $Q$, and assume $a \in A \Delta B$. Let $S = A \Delta a$ and $T = B \Delta a$. Since $p$ is a tropical Wick vector, the minimum $\min_{i \in S \Delta T} (p_{S \Delta i} + p_{T \Delta i})$ is achieved at least twice or it is equal to $\infty$. Adding $\sum_{j \in S} v_j + \sum_{j \in T} v_j$, we get that the minimum 
\begin{equation}\label{minimum}
\min_{i \in S \Delta T} \left( \left(p_{S \Delta i} + \sum_{j \in S \Delta i} v_j \right) + \left( p_{T \Delta i} + \sum_{j \in T \Delta i} v_j \right) \right)
\end{equation}
is achieved at least twice or it is equal to $\infty$. Since the minimum over all $R \in \sn$ of $p_R + \sum_{j \in R} v_j$ is achieved when $R=A$ and $R=B$, it follows that the minimum \eqref{minimum} is achieved when $i=a$ and it is finite. Therefore, there exists $b \in S \Delta T = A \Delta B$ such that $b \neq a$ and 
\[
\left(p_{S \Delta b} + \sum_{j \in S \Delta b} v_j \right) + \left( p_{T \Delta b} + \sum_{j \in T \Delta b} v_j \right) = \left(p_A + \sum_{j \in A} v_j \right) + \left( p_{B} + \sum_{j \in B} v_j \right),
\]
so $e_{S \Delta b}$ and $e_{T \Delta b}$ are also vertices of $Q$. This shows that the subsets corresponding to the vertices of $Q$ satisfy the strong exchange axiom (see Proposition \ref{strong}), and thus $Q$ is an even $\Delta$-matroid polytope. 

Now, suppose $\D_p$ is an even $\Delta$-matroid subdivision. We have that $\supp(p)$ is the collection of bases of an even $\Delta$-matroid, so by Theorem \ref{local} it is enough to prove that $p$ satisfies the 4-term tropical Wick relations. If this is not the case then for some $S \in \sn$ and $a,b,c,d \in [n] \setminus S$ distinct, one of the two minima in \eqref{min1} is achieved only once (and it is not equal to infinity). It is easy to check that the corresponding sets 
\begin{equation}\label{faces}
\begin{split}
\{ e_{Sabcd}, e_{S}, e_{Sab}, e_{Scd}, e_{Sac}, e_{Sbd}, e_{Sad}, e_{Sbc} \} \cap \{ e_S : S \in \supp(p) \} \\
\{ e_{Sabc}, e_{Sd}, e_{Sabd}, e_{Sc}, e_{Sacd}, e_{Sb}, e_{Sbcd}, e_{Sa} \} \cap \{ e_S : S \in \supp(p) \} 
\end{split}
\end{equation}
are the set of vertices of faces of $\Gamma_p$. This implies that $\D_p$ contains an edge joining the two vertices that correspond to the term where this minimum is achieved, which is not an edge of the form $\pm e_i \pm e_j$, so by Theorem \ref{ggms} the subdivision $\D_p$ is not an even $\Delta$-matroid subdivision.
\end{proof}
Note that Theorem \ref{local} can now be seen as a local criterion for even $\Delta$-matroid subdivisions: the regular subdivision induced by $p$ is an even $\Delta$-matroid subdivision if and only if the subdivisions it induces on the polytopes whose vertices are described by the sets of the form \eqref{faces} are even $\Delta$-matroid subdivisions. These polytopes are all isometric (when $p$ has maximal support), and they are known as the $4$-demicube. This is a regular $4$-dimensional polytope with $8$ vertices and $16$ facets; a picture of its Schlegel diagram, created using Robert Webb's Great Stella software \cite{stella}, is shown in Figure \ref{demicube}. The $4$-demicube plays the same role for even $\Delta$-matroid subdivisions as the hypersimplex $\Delta(2,4)$ (an octahedron) for classical matroid subdivisions.      
\begin{figure}[htbp]
\begin{center}
\includegraphics[scale=0.4]{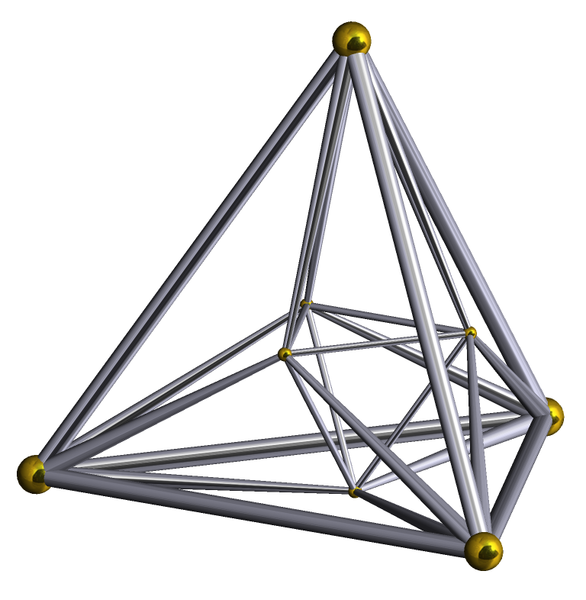}
\end{center}
\caption{Schlegel diagram of the $4$-demicube}
\label{demicube}
\end{figure}

If we restrict Theorem \ref{wickmatroid} to the case where all subsets in $\supp(p)$ have the same cardinality, we get the following corollary. It generalizes the results of Speyer in \cite{speyer} for subdivisions of a hypersimplex.
\begin{corollary}
Let $p \in \T^{\sn}$. Then $p$ is a tropical Pl\"ucker vector if and only if the regular subdivision $\D_p$ induced by $p$ is a matroid subdivision, i.e., it is a subdivision of a matroid polytope into matroid polytopes. 
\end{corollary}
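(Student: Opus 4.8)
The plan is to deduce the corollary directly from Theorem \ref{wickmatroid} by recording what happens when the vector $p$ is supported on equicardinality subsets. First I would observe that when all subsets in $\supp(p)$ have the same cardinality $k$, the associated polytope $\Gamma_p$ lies on the hyperplane $\sum_i x_i = k$, so every polytope in the induced subdivision $\D_p$ also lies on that hyperplane, and a $0/1$ polytope contained in such a hyperplane cannot have an edge of the form $e_i + e_j$ or $-e_i - e_j$ (these change the coordinate sum by $\pm 2$); its only admissible edges of the form $\pm e_i \pm e_j$ are the ones $e_i - e_j$. Thus, by Theorem \ref{ggms} together with Theorem \ref{ggmsa}, a $0/1$ polytope on the hyperplane $\sum_i x_i = k$ is an even $\Delta$-matroid polytope if and only if it is a (classical) matroid polytope.

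Next I would apply this observation twice: once to $\Gamma_p$ itself and once to every cell of $\D_p$. By Corollary \ref{3termplucker}, $p$ is a tropical Pl\"ucker vector exactly when $\supp(p)$ is a matroid (equivalently, by the hyperplane remark, an even $\Delta$-matroid whose bases are equicardinal) and $p$ satisfies the $4$-term tropical Wick relations, which in the equicardinality case are precisely the $3$-term tropical Pl\"ucker relations. Combining this with Theorem \ref{local}, $p$ is a tropical Pl\"ucker vector if and only if it is a tropical Wick vector whose support is equicardinal. Theorem \ref{wickmatroid} then says this holds if and only if $\D_p$ is an even $\Delta$-matroid subdivision; since $\Gamma_p$ and all cells of $\D_p$ sit on the hyperplane $\sum_i x_i = k$, the hyperplane remark upgrades ``even $\Delta$-matroid subdivision'' to ``matroid subdivision,'' which gives the corollary.

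There is essentially no hard obstacle here, since all the substantive work was already done in Theorem \ref{wickmatroid}; the only point requiring a moment of care is the elementary geometric fact that intersecting the class of even $\Delta$-matroid polytopes with the hyperplane $\{\sum_i x_i = k\}$ recovers exactly the class of matroid polytopes of rank $k$. One should also note that the regular subdivision of an equicardinality-supported height vector automatically consists of cells spanned by vertices of $\Gamma_p$, all of which still have coordinate sum $k$, so no cell can acquire a ``forbidden'' edge direction; this closes the argument.
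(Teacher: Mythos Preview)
Your proposal is correct and follows exactly the intended route: the paper states this corollary without a written proof, leaving it implicit as the restriction of Theorem \ref{wickmatroid} to the equicardinality case, and you have correctly spelled out the one geometric observation needed (that on the hyperplane $\sum_i x_i = k$ the edge directions $\pm e_i \pm e_j$ with $i\neq j$ reduce to $e_i - e_j$, so even $\Delta$-matroid polytopes there coincide with matroid polytopes via Theorems \ref{ggmsa} and \ref{ggms}).
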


We are now in position to prove Theorem \ref{tropbasis} and its corollary.
\begin{proof}[Proof of Theorem \ref{tropbasis}]
For $n \leq 5$, we used Anders Jensen's software Gfan \cite{gfan} to compute both the tropical spinor variety $\TSpin^+(n)$ and the even $\Delta$-Dressian $\DDr^+(n)$, and we then checked that they were equal. At the moment, Gfan does not support computations with vectors having coordinates equal to $\infty$, so we split our computation into several parts. We first computed all possible even $\Delta$-matroids on a ground set of at most $5$ elements, getting a list of $35$ even $\Delta$-matroids up to isomorphism. We then used Gfan to compute for each of these even $\Delta$-matroids $M$, the set of vectors in the tropical spinor variety and in the even $\Delta$-Dressian whose support is the collection of bases of $M$. We finally checked that for all $M$ these two sets were the same. A complete list of the $35$ even $\Delta$-matroids up to isomorphism and their corresponding spaces can be found in the website \url{http://math.berkeley.edu/~felipe/delta/} .

The most important of these spaces is obtained when $M$ is the even $\Delta$-matroid whose bases are all even-sized subsets of the set $[5]$. It is the finite part of the even $\Delta$-Dressian $\DDr^+(5)$ (and the tropical spinor variety $\TSpin^+(5)$), and it is described by $10$ nontrivial Wick relations on $16$ variables. Using Gfan we computed this space to be a pure simplicial $11$-dimensional polyhedral fan with a $6$-dimensional lineality space. After modding out by this lineality space we get a $5$ dimensional polyhedral fan whose f-vector is $(1,36,280,960,1540,912)$. By Theorem \ref{wickmatroid}, all vectors in this fan induce an even $\Delta$-matroid subdivision of the polytope $\Gamma_M$ associated to $M$, which is known as the $5$-demicube. As an example of this, the $36$ rays in the fan correspond to the coarsest nontrivial even $\Delta$-matroid subdivisions of $\Gamma_M$, which come in two different isomorphism classes: $16$ isomorphic hyperplane splits of $\Gamma_M$ into $2$ polytopes, and $20$ isomorphic subdivisions of $\Gamma_M$ into $6$ polytopes. The $912$ maximal cones in the fan correspond to the finest even $\Delta$-matroid subdivisions of $\Gamma_M$, which come in four different isomorphism classes: $192$ isomorphic subdivisions into $11$ pieces, and $720$ subdivisions into $12$ pieces, divided into $3$ distinct isomorphism classes of sizes $120$, $120$, and $480$, respectively. A complete description of all these subdivisions can also be found in the website \url{http://math.berkeley.edu/~felipe/delta/} ; they were computed with the aid of the software polymake \cite{polymake}.

We now move to the case $n \geq 7$. Recall the notion of rank for even $\Delta$-matroids discussed in Section \ref{secrank}. We will prove that for any even $\Delta$-matroid $M$ with rank function $r_M$, the vector $p=(p_T) \in \R^{\sn}$ defined as
\[
p_T := 
\begin{cases}
-r_M(\bar{T}) & \text{if $|T|$ is even}, \\
\infty & \text{otherwise};
\end{cases}
\]
is a tropical Wick vector (where $\bar{T} := T \cup ([n] \setminus T)^*$). By Theorem \ref{local}, it is enough to prove that for any $S \in \sn$ and any $a,b,c,d \in [n] \setminus S$ distinct, $p$ satisfies the 4-term tropical Wick relations given in \eqref{min1}. Since the rank function of $M$ satisfies $r_M(S \cup I) = r_{M/S} (I) + r_M(S)$ (see Proposition \ref{rankminors}), we can assume that $S = \emptyset$. In a similar way, by restricting our matroid to the ground set $\{a,b,c,d \}$ we see that it is enough to prove our claim for even $\Delta$-matroids over a ground set of at most 4 elements. There are 11 even $\Delta$-matroids up to isomorphism in this case (see \url{http://math.berkeley.edu/~felipe/delta/} ), and it is not hard to check that for all of them the assertion holds. 

Now, take $M$ to be an even $\Delta$-matroid which is not representable over $\mathbb{C}$ (for example, let $M$ be any matroid having the Fano matroid as a direct summand). In this case, the linear form $(0, 0, \dotsc, 0, 1) \in \R^{n+1}$ attains its minimum on the lifted polytope $\Gamma'_p$ at the vertices corresponding to the bases of $M$, so the corresponding even $\Delta$-matroid subdivision $\D_p$ has as one of its faces the even $\Delta$-matroid polytope of $M$. Since $M$ is not representable over $\mathbb{C}$, the tropical Wick vector $p$ is not in the tropical pure spinor space, by Lemma \ref{reppieces} below.
\end{proof}
\begin{lemma}\label{reppieces}
If $p \in \T^{\sn}$ is a representable tropical Wick vector then all the faces in the regular subdivision $\D_p$ induced by $p$ are polytopes associated to even $\Delta$-matroids which are representable over $\mathbb{C}$. 
\end{lemma}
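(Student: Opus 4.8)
The plan is to realize each face of $\D_p$ as an even $\Delta$-matroid polytope coming from an \emph{initial degeneration} of a genuine point of the space of pure spinors: the residue of that degeneration will be a point of the space of pure spinors over $\mathbb{C}$ whose support is exactly the base collection of the face. Concretely, since $p$ is representable, the Fundamental Theorem of Tropical Geometry (Theorem~\ref{fttag}) lets me write $p = \val(w)$ for some $w \in K^{\sn}$ whose class lies on the space of pure spinors over $K$, where $K$ is a valued field with residue field $\mathbb{C}$; when $p$ has rational coordinates one takes $K = \puiseux$, and in general one takes an algebraically closed extension with value group $\mathbb{R}$ and residue field $\mathbb{C}$, e.g.\ the Hahn series field $\mathbb{C}((t^{\mathbb{R}}))$ --- only rational $p$ is needed in the application to Theorem~\ref{tropbasis}. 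I may assume $\val(w_S) = p_S$ for all $S$, so $\supp(w) = \supp(p)$. Fixing now an arbitrary face $F$ of $\D_p$, I write it as the projection to $\R^n$ of the lower face of $\Gamma'_p$ minimizing a linear form $(v,1)$, where $v \in \R^n$ may be chosen with entries in the value group of $K$; then $\vertices(F) = \{e_S : S \in \mathcal{F}\}$, where
\[
\mathcal{F} := \Big\{ S \in \supp(p) : p_S + {\textstyle\sum_{j \in S} v_j} = m \Big\}, \qquad m := \min_{R \in \supp(p)} \Big( p_R + {\textstyle\sum_{j \in R} v_j} \Big),
\]
and by Theorem~\ref{wickmatroid} the set $\mathcal{F}$ is the collection of bases of the even $\Delta$-matroid $M_F$ whose polytope is $F$. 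The goal is to show that $M_F$ is representable over $\mathbb{C}$.

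The one structural fact I need is that the Wick ideal is homogeneous for the $\mathbb{Z}^n$-grading $\deg(w_S) = e_S$: inspecting~\eqref{wick}, every monomial $w_A w_B$ occurring in a fixed Wick relation $f$ has the same multidegree $e_A + e_B =: \alpha$. I would then consider the torus-translate $w^{(v)} \in K^{\sn}$ defined by $w^{(v)}_S := t^{\sum_{j \in S} v_j}\, w_S$. For each Wick relation $f$ the homogeneity gives $f(w^{(v)}) = t^{\sum_k \alpha_k v_k}\, f(w) = 0$, so $w^{(v)}$ still represents a point of the space of pure spinors over $K$. Next I would let $\bar{w} \in \mathbb{C}^{\sn}$ have $S$-coordinate equal to the coefficient of $t^m$ in the series $w^{(v)}_S$. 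Since $\val(w^{(v)}_S) = p_S + \sum_{j \in S} v_j \geq m$, with equality exactly when $S \in \mathcal{F}$, we get $\bar{w}_S \neq 0$ if and only if $S \in \mathcal{F}$; in particular $\bar{w} \neq 0$ and $\supp(\bar{w}) = \mathcal{F}$.

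It then remains to check that $\bar{w}$ represents a point of the space of pure spinors over $\mathbb{C}$, for which it suffices that $f(\bar{w}) = 0$ for every Wick relation $f = \sum_{A,B} c_{A,B}\, w_A w_B$ (with $c_{A,B} \in \{0,\pm 1\}$). Because no $w^{(v)}_S$ has a nonzero term of order below $t^m$, the coefficient of $t^{2m}$ in each product $w^{(v)}_A w^{(v)}_B$ equals $\bar{w}_A \bar{w}_B$; hence, extracting the coefficient of $t^{2m}$ from the identity $0 = f(w^{(v)}) = \sum_{A,B} c_{A,B}\, w^{(v)}_A w^{(v)}_B$ yields $\sum_{A,B} c_{A,B}\, \bar{w}_A \bar{w}_B = 0$, i.e.\ $f(\bar{w}) = 0$. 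Since the Wick relations generate the ideal of the space of pure spinors, $\bar{w}$ represents a point of it over $\mathbb{C}$ with support $\mathcal{F}$, so by the definition of representability the even $\Delta$-matroid $M_F$ with bases $\mathcal{F}$ is representable over $\mathbb{C}$. As $F$ was an arbitrary face of $\D_p$, this proves the lemma.

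The one genuinely substantial point is the step where homogeneity enters --- the fact that the weight-$v$ initial form of a point of the space of pure spinors again lies on that variety (equivalently, the passage from $f(w^{(v)}) = 0$ to $f(\bar{w}) = 0$); everything else is routine bookkeeping with valuations. The only other thing to be slightly careful about is realizing an irrational $p$ as $\val(w)$, which forces one to enlarge $\puiseux$ to a valued field with value group $\mathbb{R}$; since the residue field stays $\mathbb{C}$ this is harmless, and the issue does not even arise in the application to Theorem~\ref{tropbasis}, where the relevant $p$ is integer-valued.
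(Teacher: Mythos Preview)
Your proof is correct and follows the same overall strategy as the paper: torus-translate the Wick vector so that the bases of the chosen face carry the minimal valuation, then pass to the residue over~$\mathbb{C}$. The difference lies only in how the torus action is justified. The paper works with the concrete matrix realization $U = \mathrm{rowspace}\,[I\,|\,A]$ and observes that multiplying the $i$th row and column of the skew-symmetric matrix~$A$ by $t^{-v_i}$ effects the translation $p_S \mapsto p_S + \sum_{i\in S} v_i$; the residue is then obtained by literally setting $t=0$ in the resulting Wick vector. You instead note that the Wick relations are homogeneous for the $\mathbb{Z}^n$-grading $\deg(w_S)=e_S$, so the coordinatewise rescaling $w_S \mapsto t^{\sum_{j\in S} v_j}\,w_S$ keeps $w$ on the spinor variety, and the residue is extracted as a leading coefficient. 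Your route is a bit cleaner in that it avoids choosing an affine chart of $\Spin^{\pm}(n)$ and makes the ``initial degeneration stays on the variety'' step explicit; the paper's route is more hands-on. Your remark about enlarging $\puiseux$ to a Hahn-series field for irrational~$p$ is a nice touch; the paper sidesteps this by assuming $p \in (\mathbb{Q}\cup\infty)^{\sn}$ without loss of generality.
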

\begin{proof}
Suppose $p$ is a representable tropical Wick vector. Without loss of generality, we can assume that all the entries of $p$ are in $\mathbb{Q} \cup \infty$, so by the Fundamental Theorem of Tropical Geometry, $p$ can be obtained as the valuation of the vector of Wick coordinates corresponding to some $n$-dimensional isotropic subspace $U \subseteq \puiseux^{\J}$. Applying a suitable change of coordinates, we might assume as well that $U$ is the rowspace of some $n \times 2n$ matrix of the form $[I|A]$, where $I$ is the identity matrix of size $n$ and $A$ is an $n \times n$ skew-symmetric matrix. Let $v \in \R^n$, and suppose the face of the lifted polytope $\Gamma'_p$ minimizing the linear form $(v,1) \in \R^{n+1}$ projects back to $\R^n$ to the polytope of an even $\Delta$-matroid $M$. The bases of $M$ are then the subsets $S \in \sn$ at which $p'_S := p_S + \sum_{i \in S} v_i$ is minimal. Multiplying the rows and columns of the matrix $A$ by appropriate powers of $t$ (namely, multiplying row $i$ and column $i$ by $t^{-v_i}$), we see that the vector $(p'_S) \in \T^{\sn}$ is also a representable tropical Wick vector, so we might assume that $v = \overrightarrow{0} \in \R^n$. We can also add a scalar to all entries of $p$ and assume that $\min_{S \in \sn} p_S = 0$. Now, if $w = w(t)$ is a Wick vector in $\puiseux^{\sn}$ whose valuation is $p$ then the vector $w(0)$ obtained by substituting in $w$ the variable $t$ by $0$ is a Wick vector with entries in $\mathbb{C}$ whose support is precisely the collection of bases of $M$, thus $M$ is representable over $\mathbb{C}$.  
\end{proof}

\begin{proof}[Proof of Corollary \ref{cororep}]
The proof of Theorem \ref{tropbasis} shows that the existence of an even $\Delta$-matroid over the ground set $[n]$ which is not representable over $\mathbb{C}$ implies that the tropical pure spinor $\TSpin^{\pm}(n)$ space is strictly smaller than the $\Delta$-Dressian $\DDr(n)$, so all even $\Delta$-matroids on a ground set of at most $5$ elements are representable over $\mathbb{C}$. Moreover, since the representability of an even $\Delta$-matroid $M$ over a field $K$ is a first order property of the field $K$, any even $\Delta$-matroid which is representable over $\mathbb{C}$ is also representable over any algebraically closed field of characteristic $0$.
\end{proof}

\section{The Cocycle Space}\label{seccocycle}

In this section we define the notion of circuits, cocircuits and duality for tropical Wick vectors (i.e. valuated $\Delta$-matroids), and study the space of vectors which are ``tropically orthogonal'' to all circuits. The admissible part of this space will be called the cocycle space, for which we give a parametric representation. 

Most of our results can be seen as a generalization of results for matroids and even $\Delta$-matroids to the ``valuated'' setup. For this purpose it is useful to keep in mind that for any even $\Delta$-matroid $M = ([n],\B)$, by Theorem \ref{wickmatroid} there is a natural tropical Wick vector associated to it, namely, the vector $p_M \in \T^{\sn}$ defined as
\[
(p_M)_I:=
\begin{cases}
0 & \text{if } I \in \B, \\
\infty & \text{otherwise}.
\end{cases}
\]
In fact, as we will see below, this perspective on even $\Delta$-matroids makes tropical geometry an excellent language for working with them.

\begin{definition}
Suppose $p=(p_S) \in \T^{\sn}$ is a tropical Wick vector. It follows easily from the definition that the vector $p^*=(p^*_S) \in \T^{\sn}$ defined as $p^*_S := p_{[n]\setminus S}$ is also a tropical Wick vector, called the \textbf{dual tropical Wick vector} to $p$. Note that the even $\Delta$-matroid associated to $p^*$ is the dual even $\Delta$-matroid to the one associated to $p$.
\end{definition}

\begin{definition}
Recall that a subset $J \subseteq \J$ is said to be admissible if $J \cap J^* = \emptyset$. An admissible subset of $\J$ of size $n$ is called a transversal; the set of all transversals of $\J$ is denoted by $\V(n)$. For any subset $S \in \sn$ we defined its extension to be the transversal $\bar{S}:= S \cup ([n] \setminus S)^* \subseteq \J$. There is of course a bijection $S \mapsto \bar{S}$ between $\sn$ and $\V(n)$. 

Now, let $p=(p_S) \in \T^{\sn}$ be a tropical Wick vector. It will be convenient for us to work with the natural \textbf{extension} $\bar{p} \in \T^{\V(n)}$ of $p$ defined as $\bar{p}_{\bar{S}}:=p_S$. For any $T \in \sn$ we define the vector $c_T \in \T^{\J}$ (also denoted $c_{\bar{T}}$) as
\[
(c_T)_i = (c_{\bar{T}})_i :=
\begin{cases}
\bar{p}_{\bar{T} \Delta \{i, i^*\} } & \text{if } i \in \bar{T}, \\
\infty & \text{otherwise}.
\end{cases}
\] 
It can be easily checked that if $\supp(c_T) \neq \emptyset$ then $\supp(c_T)$ is one of the fundamental circuits of the even $\Delta$-matroid $M_p$ whose collection of bases is $\supp(p)$ (see Proposition \ref{fundamental}). We will say that the vector $c \in \T^{\J}$ is a \textbf{circuit} of the tropical Wick vector $p$ if $\supp(c) \neq \emptyset$ and there is some $T \in \sn$ and some $\lambda \in \R$ such that $c = \lambda \odot c_T$ (or in classical notation, $c = c_T + \lambda \cdot \textbf{1}$, where $\textbf{1}$ denotes the vector in $\T^{\J}$ whose coordinates are all equal to $1$). Since every circuit of $M_p$ is a fundamental circuit, we have
\[
\C(M_p) = \{ \supp(c) : c \text{ is a circuit of } p \},
\]
so we see that this notion of circuits indeed generalizes the notion of circuits for even $\Delta$-matroids to the ``valuated'' setup. The collection of circuits of $p$ will be denoted by $\C(p) \subseteq \T^{\J}$. A \textbf{cocircuit} of the tropical Wick vector $p$ is just a circuit of the dual vector $p^*$, i.e., a vector of the form $\lambda \odot c^*_T$, where $c^*_T \in \T^{\J}$ (also denoted $c^*_{\bar{T}}$) is the vector
\[
(c^*_T)_i = (c^*_{\bar{T}})_i :=
\begin{cases}
\bar{p}_{\bar{T} \Delta \{i, i^*\} } & \text{if } i \notin \bar{T}, \\
\infty & \text{otherwise}.
\end{cases}
\] 
The collection of cocircuits of $p$ will be denoted by $\C^*(p) \subseteq \T^{\J}$.
\end{definition}

We now define the concept of ``tropical orthogonality'', which is just the tropicalization of the usual notion of orthogonality in terms of the dot product.
\begin{definition}
Two vectors $x, y \in \T^N$ are said to be \textbf{tropically orthogonal}, denoted by $x \top y$, if the minimum
\[
\min (x_1 + y_1, x_2 + y_2, \dotsc , x_N + y_N)
\]
is achieved at least twice (or it is equal to $\infty$). If $X \subseteq \T^N$ then its \textbf{tropically orthogonal set} is
\[
X^\top := \{ y \in \T^N : y \top x \text{ for all } x \in X \}.
\]
\end{definition}

Under these definitions, tropical Wick relations can be stated in a very simple form.
\begin{proposition}\label{circperpcocirc}
Let $p \in \T^{\sn}$ be a tropical Wick vector. Then any circuit of $p$ is tropically orthogonal to any cocircuit of $p$.
\end{proposition}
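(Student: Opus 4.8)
The plan is to fix a circuit $c = \lambda \odot c_T$ and a cocircuit $c^* = \mu \odot c^*_{T'}$ of $p$ and show that the minimum $\min_{i \in \J}\left( c_i + c^*_i \right)$ is achieved at least twice or equals $\infty$. Since scaling by $\lambda, \mu \in \R$ only shifts this minimum by a constant, we may assume $\lambda = \mu = 0$, so $c = c_T$ and $c^* = c^*_{T'}$. First I would identify the index set on which both coordinates can be finite: $c_i$ is finite only for $i \in \bar{T}$, while $c^*_i$ is finite only for $i \notin \bar{T'}$, so the relevant overlap is $i \in \bar{T} \setminus \bar{T'}$. Writing $S = T$, $S' = T'$, I would translate the two defining formulas into statements purely about $\bar{p}$ (equivalently $p$): for $i \in \bar{T} \setminus \bar{T'}$ we get $c_i + c^*_i = \bar{p}_{\bar{T} \Delta \{i,i^*\}} + \bar{p}_{\bar{T'} \Delta \{i,i^*\}}$. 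The key observation is that $\bar{T} \Delta \{i,i^*\}$ and $\bar{T'} \Delta \{i,i^*\}$ are again extensions of subsets of $[n]$, and as $i$ ranges over $\bar{T}\setminus\bar{T'}$ the corresponding index $i$ (or $i^*$) ranges exactly over $S \Delta S'$ (after restricting back to $[n]$), so this sum is precisely $p_{S \Delta j} + p_{S' \Delta j}$ for $j \in S \Delta S'$.

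Granting that dictionary, the result falls out of the tropical Wick relation for the pair $(S, S')$: by definition of a tropical Wick vector, $\min_{j \in S \Delta S'}\left( p_{S\Delta j} + p_{S'\Delta j} \right)$ is achieved at least twice or is $\infty$. Since this minimum coincides with $\min_{i \in \bar{T}\setminus\bar{T'}}\left( c_i + c^*_i \right)$, and no coordinate outside $\bar{T} \setminus \bar{T'}$ contributes a finite value to $\min_{i \in \J}\left( c_i + c^*_i \right)$, the global minimum over $\J$ is exactly this quantity. Hence it too is attained twice or is $\infty$, which is precisely the statement $c \top c^*$.

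The step I expect to be the main obstacle — really the only nonformal point — is the bookkeeping in the dictionary: checking that for $i \in \bar{T}$ the operation $\bar{T} \mapsto \bar{T} \Delta \{i, i^*\}$ swaps whether the index is ``starred'' and so sends a transversal to a transversal, that restricting to $[n]$ turns it into symmetric difference $S \Delta j$ for the appropriate $j \in [n]$, and that the conditions $i \in \bar{T}$, $i \notin \bar{T'}$ translate correctly to $j \in S \Delta S'$ under this identification (and that $i^*$ versus $i$ is handled consistently on both sides). One must also note that indices $i$ with $i \in \bar{T} \cap \bar{T'}$ give $c^*_i = \infty$, indices with $i \notin \bar{T}$ give $c_i = \infty$, so they never affect the minimum — this is exactly the admissibility/transversal combinatorics already set up before Proposition~\ref{fundamental}. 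Once this correspondence is laid out carefully the proof is a one-line appeal to the tropical Wick relation, so I would present the dictionary as the substance of the argument and keep the rest terse.
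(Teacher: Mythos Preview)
Your proposal is correct and follows exactly the same approach as the paper: the tropical orthogonality of $c_T$ and $c^*_{T'}$ unwinds, via the transversal bookkeeping you describe, to the tropical Wick relation for the pair $(T,T')$. The paper's own proof is the one-line version of your argument (``which is exactly the content of the tropical Wick relations''), leaving the dictionary you spell out as implicit; your more detailed account of how $i \in \bar{T}\setminus\bar{T'}$ corresponds bijectively to $j \in T \Delta T'$ is accurate and is precisely what that one line is compressing.
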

\begin{proof}
It suffices to prove that for all $S, T \in \sn$, the vectors $c_S$ and $c^*_T$ are tropically orthogonal, which is exactly the content of the tropical Wick relations.
\end{proof}
Proposition \ref{circperpcocirc} can be seen as a generalization of the fact that a circuit and a cocircuit of an even $\Delta$-matroid cannot intersect in exactly one element.

We now turn to the study of the space of vectors which are tropically orthogonal to all circuits. Our motivation for this will be clear later, when we deal with tropical linear spaces. 
\begin{definition}
A vector $x \in \T^{\J}$ is said to be \textbf{admissible} if $\supp(x)$ is an admissible subset of $\J$. Let $p \in \T^{\sn}$ be a tropical Wick vector. If $x \in \C(p)^\top$ is admissible then $x$ will be called a \textbf{cocycle} of $p$. The set of all cocycles of $p$ will be called the \textbf{cocycle space} of $p$, and will be denoted by $\Q(p) \subseteq \T^{\J}$.
\end{definition}

\begin{proposition}\label{lineardep}
Suppose $p \in \T^{\sn}$ is a tropical Wick vector, and let $M$ be the even $\Delta$-matroid whose collection of bases is $\supp(p)$. Then
\begin{itemize}
\item If $x \in \C(p)^\top$ has nonempty support then $\supp(x)$ is a dependent subset in $M^*$. 
\item The cocycles of $p$ having minimal nonempty support (with respect to inclusion) are precisely the cocircuits of $p$. 
\item For any two cocircuits $c^*_1$ and $c^*_2$ of $p$ with the same support there is a $\lambda \in \R$ such that $c^*_1 = \lambda \odot c^*_2$.
\end{itemize}
\end{proposition}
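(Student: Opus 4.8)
\textbf{Proof proposal for Proposition \ref{lineardep}.} The plan is to handle the three bullets in order, reducing the first two to statements about the underlying even $\Delta$-matroid $M$ (and its dual) via Proposition \ref{circperpcocirc} and Proposition \ref{circcocirc}, and the third to the ``valuated'' rigidity of fundamental circuits.

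First I would prove the first bullet. Suppose $x \in \C(p)^\top$ has nonempty support $J := \supp(x)$; note $x$ need not be admissible here, but since every circuit $c_T$ has admissible support, tropical orthogonality $x \top c_T$ only ``sees'' the coordinates of $x$ on $\supp(c_T)$, which is admissible. Assume for contradiction that $J$ is independent in $M^*$, i.e. $J$ is contained in some transversal $\bar B^*$ with $\bar B^* \in \bar{\B^*}$, equivalently $\bar B := \J \setminus \bar B^*$ (the complementary transversal, an element of $\bar{\B}$). Pick any coordinate $j \in J$ that attains the minimum of $\bigoplus_i x_i + (c_S)_i$ for the relevant circuit; the idea is that since $J$ misses $\bar B$ in at most... — more precisely, I would choose a circuit $c_S$ of $p$ whose support $C := \supp(c_S) \in \C(M_p)$ satisfies $|C \cap J| = 1$, contradicting that the minimum defining $x \top c_S$ is attained at least twice (and it is finite because both $x$ and $c_S$ are supported on the shared coordinate). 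To produce such a $C$: since $J$ is independent in $M^*$, some $j_0 \in J$ lies outside the transversal $\bar B$ (if $J \subseteq \bar B$ then $J$ would be independent in $M$ as well, but independence of $J$ in both $M$ and $M^*$ forces... — actually the cleanest route is: take $j_0 \in J$, let $\bar B$ be a basis-transversal of $M$ with $J \subseteq$ some transversal disjoint from... ). Let me instead argue directly: because $J$ is dependent in $M^*$ is what we want, suppose not; then I use the fundamental-circuit description (Proposition \ref{fundamental}) to find a circuit $C(\bar B, j_0)$ of $M^*$ meeting $J$ in exactly one element, which by Proposition \ref{circcocirc} applied to $M$ (circuits of $M^*$ are cocircuits of $M$) cannot meet a circuit of $M$ in one point — this is a contradiction with $x \top c_S$ once we match supports. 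The bookkeeping of which transversal to pick is the step I expect to be the main obstacle; the matroid statement being invoked is essentially ``a vector orthogonal to all circuits is supported on a dependent set of the dual,'' the type-$D$ analogue of the classical cocycle/circuit orthogonality.

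Next, the second bullet. One inclusion is Proposition \ref{circperpcocirc} together with the observation that cocircuits are admissible (their supports are circuits of $M^*$, hence admissible by Definition \ref{cycles}), so every cocircuit is a cocycle; minimality of its support among nonempty supports of cocycles follows from the first bullet, since $\supp$ of any nonzero cocycle is dependent in $M^*$ and hence contains a circuit of $M^*$, i.e. the support of some cocircuit of $p$ — and I must check that this containing cocircuit is itself a cocycle, which it is by the first inclusion. For the reverse inclusion, I take a cocycle $x$ whose support is minimal and show $\supp(x)$ is itself a circuit of $M^*$ (not merely contains one): if $\supp(x)$ strictly contained a circuit $C = \supp(c^*_T)$ of $M^*$, then $c^*_T$ is a cocycle with strictly smaller support, contradicting minimality. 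So $\supp(x) = \supp(c^*_T)$ for some cocircuit $c^*_T$; then the third bullet (proved next, or proved first and cited here) upgrades ``same support'' to ``$x = \lambda \odot c^*_T$,'' showing $x$ is a cocircuit.

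Finally, the third bullet. Let $c^*_1 = c^*_S$ and $c^*_2 = c^*_T$ be cocircuits with $\supp(c^*_S) = \supp(c^*_T) =: C$. These are fundamental circuits of $M^*$, so $C$ is contained in a unique transversal-plus-one configuration; concretely $C \subseteq \bar S^c \cup \{$one element$\}$ in the notation of the $c^*$-definition. I would argue coordinatewise: for $i \in C$, $(c^*_S)_i = \bar p_{\bar S \Delta\{i,i^*\}}$ and $(c^*_T)_i = \bar p_{\bar T \Delta\{i,i^*\}}$. Using the 4-term tropical Wick relations (Theorem \ref{local}(b)) relating $\bar S$ and $\bar T$ — whose symmetric difference, after passing through the common support, has size $4$ — one shows the ratio $(c^*_S)_i - (c^*_T)_i$ (tropical division) is independent of $i \in C$, giving the constant $\lambda$. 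The engine here is exactly the uniqueness part of Proposition \ref{fundamental} lifted to the valuated level, and the main thing to get right is that the Wick relation tying the two fundamental circuits together has no ``third term,'' so the minimum being attained twice pins down the proportionality exactly rather than just as an inequality.
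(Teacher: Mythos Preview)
Your plan for the first bullet has the right shape but a misidentification: you want a circuit of $M$ (not of $M^*$) meeting $\supp(x)$ in exactly one point. The paper does this cleanly: if $\supp(x)$ is independent in $M^*$ it lies in some transversal $\bar{B^*}$, hence is disjoint from the complementary basis-transversal $\bar B \in \bar{\B}$; for any $j \in \supp(x)$ the fundamental circuit $C(\bar B, j) \subseteq \bar B \cup j$ meets $\supp(x)$ only in $j$, so the corresponding $c_J$ with $J = \bar B \Delta\{j,j^*\}$ witnesses $|\supp(x)\cap\supp(c_J)|=1$, contradicting $x \top c_J$. Your detour through Proposition~\ref{circcocirc} is unnecessary and stems from the $M$/$M^*$ mix-up.

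The real gap is in bullets two and three. Your strategy is to prove the third bullet independently via a $4$-term Wick relation and then invoke it to finish the second. This fails twice. First, there is no reason for the transversals $\bar S$ and $\bar T$ indexing two cocircuits with the same support to satisfy $|\bar S \Delta \bar T| = 4$: a circuit of $M^*$ can be a fundamental circuit over many different bases, and those bases can be far apart, so a single $4$-term relation does not directly compare $(c^*_S)_i$ with $(c^*_T)_i$. Second, even granting the third bullet as stated, it only compares two \emph{cocircuits}; in your argument for the second bullet you have a cocycle $x$ and a cocircuit $c^*_T$ with the same support, and you need the stronger statement that such an $x$ is already a tropical scalar multiple of $c^*_T$.

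The paper avoids both problems by proving the second and third bullets together with a single direct computation. Given a cocycle $x$ with minimal support and a fixed $j \in \supp(x)$, choose a basis $B$ of $M$ with $(\supp(x)-j)\cap \bar B = \emptyset$; then for each $k \in \supp(x)-j$ the circuit $c_{J_k}$ with $J_k = \bar B \Delta\{k,k^*\}$ has $\supp(x)\cap\supp(c_{J_k}) = \{j,k\}$, so $x\top c_{J_k}$ forces the \emph{equality} $x_k - x_j = p_{\bar B \Delta\{k,k^*\}\Delta\{j,j^*\}} - p_{\bar B}$. This pins $x$ down as $c^*_{\bar B \Delta\{j,j^*\}} + (x_j - p_{\bar B})\cdot\mathbf{1}$, proving $x$ is a cocircuit; the third bullet follows immediately because this formula depends only on $\supp(x)$ (through $B$ and $j$). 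The key device you are missing is this choice of circuits $c_{J_k}$ whose supports meet $\supp(x)$ in exactly two points, converting tropical orthogonality into exact equalities.
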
 
\begin{proof}
Assume that $x \in \C(p)^\top$ has nonempty independent support in $M^*$, so there exists a basis $B \in \B(M)$ such that $\supp(x) \cap \bar{B} = \emptyset$. Take $j \in \supp(x)$, and consider the admissible subset $J := \bar{B} \Delta  \{j, j^*\}$. The circuit $c_J$ of $p$ satisfies $j \in \supp(c_J) \subseteq \bar{B} \cup j$, so $\supp(x) \cap \supp(c_J) = \{j\}$ and thus $x$ cannot be tropically orthogonal to $c_J$.

Now, Proposition \ref{circperpcocirc} tells us that all cocircuits of $p$ are cocycles of $p$. Suppose $x$ is a cocycle with minimal nonempty support, and fix $j \in \supp(x)$. Since $\supp(x)$ is an admissible dependent subset in $M^*$ and $\Q(p)$ contains all cocircuits of $p$, we have that $\supp(x)$ must be a cocircuit of $M$. Therefore, there is a basis $B \in \B(M)$ such that $(\supp(x)-j) \cap \bar{B} = \emptyset$. For any $k \in \supp(x)-j$, consider the admissible subset $J_k := \bar{B} \Delta  \{k, k^*\}$. We have that $k \in \supp(x) \cap \supp(c_{J_k})$, $\supp(x) \subseteq (\J \setminus \bar{B}) \cup j$ and $\supp(c_{J_k}) \subseteq \bar{B} \cup k$, so we must have $\supp(x) \cap \supp(c_{J_k}) = \{ j, k \}$, since $x \top c_{J_k}$. We thus have
\[
x_j + (c_{J_k})_j = x_k + (c_{J_k})_k,
\]
so
\begin{equation}\label{xcirc}
x_k - x_j = (c_{J_k})_j - (c_{J_k})_k = p_{\bar{B} \Delta \{k,k^*\} \Delta \{j,j^*\}} - p_{\bar{B}}.
\end{equation}
Since equation \eqref{xcirc} is true for any $k \in \supp(x)-j$ (and also for $k=j$), it follows that
\begin{equation}\label{expression}
x = c^*_{\bar{B} \Delta \{j,j^*\}} + (x_j - p_{\bar{B}})\cdot \textbf{1},
\end{equation}
so $x$ is a cocircuit of $p$ as required. Finally, the above discussion shows that if $c^*_1$ and $c^*_2$ are cocircuits of $p$ with the same support then both of them can be written in the form given in equation \eqref{expression} (using the same $B$ and $j$), so there is a $\lambda \in \R$ such that $c^*_1 = c^*_2 + \lambda \cdot \textbf{1}$.
\end{proof}

We will now give a parametric description for the cocycle space $\Q(p) \subseteq \T^{\J}$ of a tropical Wick vector $p \in \T^{\sn}$. For this purpose we first introduce the concept of tropical convexity. More information about this topic can be found in \cite{develin}. 
\begin{definition}
A set $X \subseteq \T^N$ is called \textbf{tropically convex} if it is closed under tropical linear combinations, i.e., for any $x_1, x_2, \dotsc, x_r \in X$ and any $\lambda_1, \lambda_2, \dotsc, \lambda_r \in \T$ we have that $\lambda_1 \odot x_1 \oplus \lambda_2 \odot x_2 \oplus \dotsb \oplus \lambda_r \odot x_r \in X$. For any $a_1, a_2, \dotsc, a_r \in \T^N$, their \textbf{tropical convex hull} is defined to be
\[
\tconvex(a_1, a_2, \dotsc, a_r) := \{ \lambda_1 \odot a_1 \oplus \lambda_2 \odot a_2 \oplus \dotsb \oplus \lambda_r \odot a_r : \lambda_1, \lambda_2, \dotsc, \lambda_r \in \T \};
\]
it is the smallest tropically convex set containing the vectors $a_1, a_2, \dotsc, a_r$. A set of the form $\tconvex(a_1, a_2, \dotsc, a_r)$ is usually called a \textbf{tropical polytope}.
\end{definition}

\begin{lemma}\label{lemmaadmhull}
Let $p=(p_S) \in \T^{\sn}$ be a tropical Wick vector. If $x \in \T^{\J}$ is in the cocycle space $\Q(p)$ of $p$ then $x$ is in the tropical convex hull of the cocircuits of $p$.
\end{lemma}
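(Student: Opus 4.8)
The plan is to take an arbitrary cocycle $x \in \Q(p)$ and reconstruct it as an explicit tropical linear combination of cocircuits of $p$, guided by the structure theory established in Proposition~\ref{lineardep}. First I would dispose of the degenerate case: if $x$ has empty support then $x$ is the all-$\infty$ vector, which is the empty tropical combination (or any tropical multiple of a cocircuit by $\lambda = \infty$), so we may assume $\supp(x) \neq \emptyset$. By Proposition~\ref{lineardep}, $\supp(x)$ is then an admissible dependent subset of $M^*$, and it contains the support of some cocircuit. The natural guess is that
\[
x = \bigoplus_{j \in \supp(x)} x_j \odot d^{(j)},
\]
where for each $j \in \supp(x)$ the vector $d^{(j)}$ is a suitably normalized cocircuit of $p$ whose support contains $j$, with $d^{(j)}_j = 0$.

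Next I would make this choice precise. Fix $j \in \supp(x)$. Since $\supp(x)$ is admissible and dependent in $M^*$, and since all cocircuits of $M$ are cocycles of $p$, one can (as in the proof of Proposition~\ref{lineardep}) find a basis $B \in \B(M)$ with $(\supp(x) - j) \cap \bar B = \emptyset$ and take $d^{(j)}$ to be the cocircuit $c^*_{\bar B \Delta \{j,j^*\}}$ renormalized so that $d^{(j)}_j = 0$; the argument in that proof yields, for every $k \in \supp(x)$, the identity $x_k - x_j = d^{(j)}_k$ whenever $k \in \supp(d^{(j)})$, i.e. $x_k = x_j \odot d^{(j)}_k$ on $\supp(d^{(j)})$. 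So on the support of $d^{(j)}$ the vector $x_j \odot d^{(j)}$ agrees exactly with $x$. I would then check the two inequalities needed for the claimed tropical sum: (i) for each coordinate $i$ and each $j \in \supp(x)$ one has $x_i \le x_j \odot d^{(j)}_i$, so the minimum is at least $x_i$; and (ii) for each $i$ there is some $j$ (namely any $j$ with $i \in \supp(d^{(j)})$, e.g. $j = i$ itself if $i \in \supp(x)$, using $d^{(i)}_i = 0$) achieving equality, so the minimum equals $x_i$. Inequality (ii) is immediate from the previous paragraph; inequality (i) is where the cocycle hypothesis $x \in \C(p)^\top$ is used again, via tropical orthogonality of $x$ to the relevant fundamental circuits, exactly as in the $|\supp(x) \cap \supp(c_{J_k})| = \{j,k\}$ step of Proposition~\ref{lineardep}.

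The main obstacle I anticipate is making the bound (i) hold at coordinates $i$ \emph{outside} $\supp(x)$, and more generally verifying that the cocircuit $d^{(j)}$ can be chosen coherently so that $x_j \odot d^{(j)} \ge x$ holds coordinatewise rather than just on $\supp(d^{(j)})$ — i.e. that no coordinate of $x_j \odot d^{(j)}$ accidentally drops below the corresponding coordinate of $x$. Handling this cleanly will likely require comparing the fundamental circuits attached to different bases $B$ and using the exchange/circuit axioms (Propositions~\ref{fundamental} and \ref{circcocirc}) to control the supports of the $c^*_{\bar B \Delta\{j,j^*\}}$, together with the inequalities already packaged in $x \top c_J$ for every admissible $J$. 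Once these coordinatewise comparisons are in hand, combining (i) and (ii) shows that the proposed tropical combination of cocircuits equals $x$ on every coordinate, which is the assertion of the lemma.
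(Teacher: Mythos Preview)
Your overall scaffold --- produce, for each $j\in\supp(x)$, a normalized cocircuit $d^{(j)}$ with $d^{(j)}_j=x_j$ and $d^{(j)}\ge x$ coordinatewise, and then take the tropical sum --- is exactly the paper's strategy. The gap is in how you manufacture $d^{(j)}$. You write ``one can (as in the proof of Proposition~\ref{lineardep}) find a basis $B \in \B(M)$ with $(\supp(x) - j) \cap \bar B = \emptyset$''. That step in Proposition~\ref{lineardep} works only because there $\supp(x)$ is a \emph{minimal} dependent admissible set, i.e.\ a cocircuit of $M$, so $\supp(x)-j$ is independent in $M^*$. For an arbitrary cocycle, $\supp(x)-j$ can still be dependent in $M^*$ (e.g.\ when $\supp(x)$ is an admissible union of two disjoint cocircuits and $j$ lies in only one of them), and then no such $B$ exists. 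Consequently the exact identity $x_k-x_j=d^{(j)}_k$ you import from that proof is unavailable in general; at best one can hope for an inequality.

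The paper supplies the missing idea with an extremal choice of basis. Instead of asking that $\bar B$ avoid $\supp(x)-j$, one picks $B\in\B(M^*)$ with $j\in\bar B$ so that first $|\bar B\cap\supp(x)|$ is \emph{maximal} and second the weighted quantity $p^*_{\bar B}+\sum_{l\in\supp(x)\cap\bar B}x_l$ is \emph{minimal}. The maximality forces the fundamental cocircuit $c^*_j$ built from $B$ to have $\supp(c^*_j)\subseteq\supp(x)$ (so your worry about coordinates outside $\supp(x)$ evaporates), and the minimality yields, for each $l\in\supp(c^*_j)$, the inequality $(c^*_j)_j-(c^*_j)_l\le x_j-x_l$, which is exactly your bound (i). Tropical orthogonality $x\top c_J$ is used once, to pass from $j$ to an auxiliary index $k$ and relate the circuit and cocircuit sides via Proposition~\ref{circperpcocirc}. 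In short: your plan is right, but the construction of $d^{(j)}$ must be driven by an extremal/greedy choice of $B$ rather than by the disjointness condition borrowed from the minimal-support case.
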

\begin{proof}
Let $M$ denote the even $\Delta$-matroid whose collection of bases is $\supp(p)$. Let $x \in \Q(p)$, and suppose $j \in \supp(x)$. 

Assume first that $\{j\}$ is an independent set in $M^*$, and take a basis $B \in \B(M^*)$ such that $j \in \bar{B}$, the number of elements in $\bar{B} \cap \supp(x)$ is as large as possible, and 
\begin{equation}\label{mini}
p'_{\bar{B}}:=p^*_{\bar{B}} + \sum_{l \in \supp(x) \cap \bar{B}} x_l
\end{equation}
is as small as possible (using that order of precedence). Now, consider the admissible subset $J := (\J \setminus \bar{B}) \Delta \{j, j^*\}$, and denote $c_j := c_J$. Since $x$ is a cocycle of $p$, we have that $x \top c_j$, so there is a $k \in \J - j$ such that the minimum
\[
\min_{l \in \J} (x_l + (c_j)_l)
\]
is attained when $l= k$. It follows that
\begin{equation}\label{xco}
x_{k} + (c_j)_{k} \leq x_j + (c_j)_j < \infty,
\end{equation}
so in particular $k \in \supp(x) \cap \supp(c_j)$. Note that, since $\supp(c_j) \subseteq J$, we have that $k \in \J \setminus \bar{B}$. Let $J' := (\J \setminus \bar{B}) \Delta \{k, k^*\}$, and consider the cocircuit $c^*_j := c^*_{J'}$ of $p$. The support of $c^*_j$ is the fundamental circuit in $M^*$ of $k$ over $\bar{B}$, so our choice of $B$ and the fact that $x$ is admissible imply that $\supp(c^*_j) \subseteq \supp(x)$ (see Proposition \ref{fundamental}). Moreover, since $k \in \supp(c_j) \cap \supp(c^*_j)$, $\supp(c^*_j) \subseteq \bar{B} \cup k$, and $\supp(c_j) \subseteq (\J \setminus \bar{B}) \cup j$, by Proposition \ref{circperpcocirc} we must have $\supp(c_j) \cap \supp(c^*_j) = \{ j, k \}$ and 
\begin{equation}\label{eqcirco}
(c_j)_j + (c^*_j)_j = (c_j)_{k} + (c^*_j)_k.
\end{equation}
Now, note that for any $l \in \supp{c^*_j}-j$, our choice of $B$ minimizing \eqref{mini} implies that $p'_{\bar{B}} \leq p'_{\bar{B} \Delta \{k,k^*\} \Delta \{l,l^*\}}$. Since $x$ is admissible, this means that
\begin{equation}\label{ineq1}
(c^*_j)_k-(c^*_j)_l = p^*_{\bar{B}} - p^*_{\bar{B} \Delta \{k,k^*\} \Delta \{l,l^*\}} \leq x_k-x_l.
\end{equation}
Moreover, \eqref{xco} and \eqref{eqcirco} tell us that
\begin{equation}\label{ineq2}
(c^*_j)_j-(c^*_j)_k = (c_j)_k - (c_j)_j \leq x_j - x_k,
\end{equation}
and adding \eqref{ineq1} and \eqref{ineq2} we get
\begin{equation}\label{ineq3}
(c^*_j)_j - (c^*_j)_l \leq x_j -x_l.
\end{equation}
Now, consider the cocircuit $d^*_j := c^*_j - ( (c^*_j)_j - x_j ) \cdot \textbf{1}$ of $p$. We have $(d^*_j)_j =x_j$, and if $l \in \supp{d^*_j}-j = \supp{c^*_j}-j$ then \eqref{ineq3} implies that $(d^*_j)_l \geq x_l$.

In the case $\{j\}$ is a cocircuit of $M$, take $d^*_j$ to be the cocircuit of $p$ given by
\[
(d^*_j)_l :=
\begin{cases}
x_j & \text{if } l=j, \\
\infty & \text{otherwise}.
\end{cases}
\] 
By the above discussion, we have that $x = \min_{j \in \supp(x)} d^*_j$, so $x$ is in the tropical convex hull of the cocircuits of $p$ as desired.
\end{proof}

We will now state the main theorem of this section.
\begin{theorem}\label{admhull}
Let $p \in \T^{\sn}$ be a tropical Wick vector. Then the cocycle space $\Q(p) \subseteq \T^{\J}$ of $p$ is the set of admissible vectors in the tropical convex hull of the cocircuits of $p$.
\end{theorem}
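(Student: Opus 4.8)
The strategy is to prove the two inclusions of the claimed equality separately, noting that one of them has essentially been carried out already.

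\emph{The inclusion ``$\subseteq$''.} By definition every cocycle of $p$ is admissible, and Lemma~\ref{lemmaadmhull} says that every cocycle lies in the tropical convex hull of the cocircuits of $p$. Hence $\Q(p)$ is contained in the set of admissible vectors in $\tconvex(\C^*(p))$, and nothing more is needed for this direction. All the genuine work of this circle of ideas sits in Lemma~\ref{lemmaadmhull}, so the theorem will drop out once we add one easy general observation.

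\emph{The inclusion ``$\supseteq$''.} The extra ingredient I would isolate is the fact that for any subset $X \subseteq \T^N$, the tropically orthogonal set $X^\top$ is tropically convex. It suffices to check that if a fixed $y \in \T^N$ satisfies $y \top a$ and $y \top b$, then $y \top (\lambda \odot a \oplus \mu \odot b)$ for all $\lambda, \mu \in \T$. Scaling $a$ by $\lambda \in \T$ shifts every entry of $y+a$ by $\lambda$, so it shifts the relevant minimum uniformly and preserves the set of indices attaining it (the case $\lambda = \infty$, where $\lambda \odot a$ is the all-$\infty$ vector, being immediate); this reduces us to proving $y \top (a \oplus b)$ from $y \top a$ and $y \top b$. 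For this, put $m = \min_i(y_i+a_i)$ and $m' = \min_i(y_i+b_i)$, and observe that $\min_i\big(y_i + (a\oplus b)_i\big) = \min_i \min(y_i+a_i, y_i+b_i) = \min(m,m')$. Assuming without loss of generality $m \le m'$: at each of the at least two indices $i$ with $y_i + a_i = m$ we have $y_i + (a\oplus b)_i = \min(m,\, y_i+b_i) = m$ since $y_i+b_i \ge m' \ge m$, so this minimum is attained at least twice; and if $m = m' = \infty$ then $y + (a\oplus b)$ is the all-$\infty$ vector, so orthogonality also holds. This proves the claim.

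Now apply this with $X = \C(p)$. By Proposition~\ref{circperpcocirc} every cocircuit of $p$ lies in $\C(p)^\top$, and since $\C(p)^\top$ is tropically convex it contains the tropical convex hull of the cocircuits, i.e.\ $\tconvex(\C^*(p)) \subseteq \C(p)^\top$ (here $\tconvex(\C^*(p))$ is the finitely generated tropical polytope with generators $c^*_T$, $T \in \sn$, so this makes sense). Consequently, every admissible vector in $\tconvex(\C^*(p))$ is an admissible vector in $\C(p)^\top$, which is precisely the definition of a cocycle of $p$. Combining the two inclusions gives $\Q(p) = \{\, x \in \T^{\J} : x \text{ admissible and } x \in \tconvex(\C^*(p)) \,\}$, as claimed. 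The only step involving any computation is the short case analysis showing $X^\top$ is tropically convex; I do not expect a real obstacle here, the substantive difficulty having been absorbed into Lemma~\ref{lemmaadmhull}.
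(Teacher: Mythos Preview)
Your proof is correct and follows essentially the same approach as the paper: one inclusion is Lemma~\ref{lemmaadmhull}, and the other follows from the observation that $\C(p)^\top$ is tropically convex together with Proposition~\ref{circperpcocirc}. The paper states the tropical convexity of $Y^\top$ more tersely (deducing it from the convexity of each $\{y\}^\top$ and closure under intersection), whereas you spell out the computation, but the structure is identical.
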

\begin{proof}
One implication is given by Lemma \ref{lemmaadmhull}. For the reverse implication, it is not hard to see that if $y \in \T^{\J}$ then the set $\{y\}^\top$ is tropically convex, and since any intersection of tropically convex sets is tropically convex, any set of the form $Y^\top$ with $Y \subseteq \T^{\J}$ is tropically convex. Therefore, since the space $\C(p)^\top$ contains all the cocircuits of $p$, it contains their tropical convex hull, so the result follows.
\end{proof}
Theorem \ref{admhull} implies that if $p$ is a tropical Wick vector and $M$ is its associated even $\Delta$-matroid then the set of supports of all cocycles of $p$ is precisely the set of cocycles of $M$ (see Definition \ref{cycles}). This shows that our definition of cocycles for tropical Wick vectors extends the usual definition of cocycles for even $\Delta$-matroids to the valuated setup.

\begin{corollary}\label{cocycleorth}
Let $p \in \T^{\sn}$ be a tropical Wick vector. Then $\Q(p^*) \subseteq \T^{\J}$ is the set of admissible vectors in $\Q(p)^\top$.
\end{corollary}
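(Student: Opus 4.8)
The plan is to deduce this purely formally from Theorem~\ref{admhull}, Proposition~\ref{circperpcocirc}, and the observation (made inside the proof of Theorem~\ref{admhull}) that every set of the form $Y^\top$ with $Y \subseteq \T^{\J}$ is tropically convex. First I would record the bookkeeping on dualities: since $(p^*)^* = p$, the circuits of $p^*$ are exactly the cocircuits of $p$, and the cocircuits of $p^*$ are exactly the circuits of $p$. Applying Theorem~\ref{admhull} to $p^*$ then says that $\Q(p^*)$ is the set of admissible vectors in $\tconvex$ of the circuits of $p$, so the statement to prove becomes: the admissible part of $\tconvex(\C(p))$ equals the admissible part of $\Q(p)^\top$.

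For the inclusion "$\subseteq$", I would show that every circuit $c$ of $p$ already lies in $\Q(p)^\top$. By Proposition~\ref{circperpcocirc}, $c$ is tropically orthogonal to every cocircuit of $p$, i.e. $\C^*(p) \subseteq \{c\}^\top$; since $\{c\}^\top$ is tropically convex this gives $\tconvex(\C^*(p)) \subseteq \{c\}^\top$, and by Theorem~\ref{admhull} we have $\Q(p) \subseteq \tconvex(\C^*(p))$, so $c \top y$ for every $y \in \Q(p)$, that is, $c \in \Q(p)^\top$. Hence $\C(p) \subseteq \Q(p)^\top$; since $\Q(p)^\top$ is tropically convex it contains $\tconvex(\C(p))$, and intersecting with the admissible vectors yields $\Q(p^*) \subseteq \{\text{admissible vectors in } \Q(p)^\top\}$.

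For the reverse inclusion, let $x \in \T^{\J}$ be admissible with $x \in \Q(p)^\top$. By Proposition~\ref{circperpcocirc} every cocircuit of $p$ is admissible and tropically orthogonal to all circuits of $p$, hence is a cocycle of $p$, i.e. lies in $\Q(p)$; therefore $x$ is tropically orthogonal to every cocircuit of $p$, equivalently to every circuit of $p^*$. Thus $x \in \C(p^*)^\top$, and since $x$ is admissible, $x \in \Q(p^*)$ by the definition of the cocycle space. This closes both inclusions. I do not expect a genuine obstacle here: the argument is essentially formal once Theorem~\ref{admhull} is available, and the only points that need a little care are keeping the dualities straight and invoking the tropical convexity of $\{y\}^\top$ (and of $\Q(p)^\top$) to pass from individual cocircuits/circuits to their tropical convex hulls.
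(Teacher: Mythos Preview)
Your proof is correct and follows essentially the same approach as the paper: both directions rest on the facts that $\C^*(p)\subseteq\Q(p)$, that $\C(p)\subseteq\Q(p)^\top$, that $Y^\top$ is tropically convex, and on applying Theorem~\ref{admhull} to $p^*$. The only minor difference is that for the inclusion $\C(p)\subseteq\Q(p)^\top$ you argue via Proposition~\ref{circperpcocirc} and Theorem~\ref{admhull}, whereas the paper observes this directly from the definition of $\Q(p)$ as the admissible part of $\C(p)^\top$.
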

\begin{proof}
Since $\Q(p)$ contains all cocircuits of $p$, taking orthogonal sets we get that all admissible vectors in $\Q(p)^\top$ are also in $\Q(p^*)$. On the other hand, by definition, we have that $\Q(p)^\top$ contains all the circuits of $p$, and since $\Q(p)^\top$ is tropically convex, $\Q(p)^\top$ contains their tropical convex hull. Applying Theorem \ref{admhull} to $p^*$ we get that $\Q(p^*)$ is contained in the set of admissible vectors of $\Q(p)^\top$.
\end{proof}

\subsection{Tropical Linear Spaces}

We will now specialize some of the results presented above to tropical Pl\"ucker vectors (i.e. valuated matroids). In this way we will unify several results for tropical linear spaces given by Murota and Tamura in \cite{murotatamura}, Speyer in \cite{speyer}, and Ardila and Klivans in \cite{ardila}. Unless otherwise stated, all matroidal terminology in this section will refer to the classical matroidal notions and not to the $\Delta$-matroidal notions discussed above.

\begin{definition}
Let $p=(p_S) \in \T^{\sn}$ be a tropical Pl\"ucker vector of rank $r_p$. For $T \in \sn$ of size $r_p +1$, we define the vector $d_T \in \T^n$ as
\[
(d_T)_i :=
\begin{cases}
p_{T - i} & \text{if } i \in T, \\
\infty & \text{otherwise}.
\end{cases}
\]
If $\supp(d_T) \neq \emptyset$ then $\supp(d_T)$ is one of the fundamental circuits of the matroid $M_p$ whose collection of bases is $\supp(p)$. We will say that the vector $d \in \T^n$ is a \textbf{Pl\"ucker circuit} of $p$ if $\supp(d) \neq \emptyset$ and there is some $T \in \sn$ of size $r_p +1$ and some $\lambda \in \R$ such that $d = \lambda \odot d_T$ (or in classical notation, $d = d_T + \lambda \cdot \textbf{1}$, where $\textbf{1}$ denotes the vector in $\T^n$ whose coordinates are all equal to $1$). Since every circuit of $M_p$ is a fundamental circuit, we have
\[
\C(M_p) = \{ \supp(d) : d \text{ is a Pl\"ucker circuit of } p \},
\]
so this notion of Pl\"ucker circuits generalizes the notion of circuits for matroids to the ``valuated'' setup. The collection of Pl\"ucker circuits of $p$ will be denoted by $\PC(p)$. 
A \textbf{Pl\"ucker cocircuit} of $p$ is just a Pl\"ucker circuit of the dual vector $p^*$, i.e., a vector of the form $\lambda \odot d^*_T$ where $T \in \sn$ has size $n-r_p-1$ and $d^*_T \in \T^n$ denotes the vector
\[
(d^*_T)_i :=
\begin{cases}
p_{T \cup i} & \text{if } i \notin T, \\
\infty & \text{otherwise}.
\end{cases}
\] 
The collection of Pl\"ucker cocircuits of $p$ will be denoted by $\PC^*(p)$.
\end{definition}
The reason we are using the name ``Pl\"ucker circuits'' is just so that they are not confused with the circuits of $p$ in the $\Delta$-matroidal sense; a more appropriate name (but not very practical for the purposes of this paper) would be ``circuits in type A'' (while the $\Delta$-matroidal circuits are ``circuits in type D'').

The following definition was introduced by Speyer in \cite{speyer}.
\begin{definition}
Let $p \in \T^{\sn}$ be a tropical Pl\"ucker vector. The space $L_p := \PC(p)^\top \subseteq \T^n$ is called the \textbf{tropical linear space} associated to $p$.
\end{definition} 
The tropical linear space $L_p$ should be thought of as the space of cocyles of $p$ ``in type A'' (while $\Q(p)$ is the space of cocycles of $p$ ``in type D''). 

Tropical linear spaces have a very special geometric importance that we now describe. We will only mention some of the basic facts, the reader can consult \cite{speyer} for much more information and proofs. Consider the $n$-dimensional vector space $V:=\puiseux^n$ over the field $K := \puiseux$, and suppose $W$ is a $k$-dimensional linear subspace of $V$ with Pl\"ucker coordinates $P \in K^{\binom{n}{k}}$. Let $p \in \T^{\binom{n}{k}} \subseteq \T^{\sn}$ be the valuation of the vector $P$. Since $P$ satisfies the Pl\"ucker relations, the vector $p$ is a tropical Pl\"ucker vector. Under this setup, Speyer proved that the tropicalization of the linear space $W$ (i.e. the tropical variety associated to its defining ideal) is precisely the tropical linear space $L_p$. Also, if $W^\perp$ is the corresponding orthogonal linear subspace then the tropicalization of $W^\perp$ is the tropical linear space $L_{p^*}$. It is also shown in \cite{speyer} that if $p$ is any tropical Pl\"ucker vector (not necessarily realizable by a subspace $W$ of $V$) of rank $r_p$ then the polyhedral complex $L_p \cap \R^n$ is a pure polyhedral complex of dimension $r_p$.  

The following proposition will allow us to apply the ``type D'' results that we got in previous sections to the study of tropical linear spaces.
\begin{proposition}\label{linearplucker}
Let $p \in \T^{\sn}$ be a tropical Pl\"ucker vector, and let $L_p \subseteq \T^{n}$ be its associated linear space. Then, under the natural identification $\T^{\J} \cong \T^n \times \T^n$, we have $\C(p)^\top = L_p \times L_{p^*}$.
\end{proposition}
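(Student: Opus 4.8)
The plan is to show the two halves of the identification separately and then combine them. Recall that under $\T^{\J} \cong \T^n \times \T^n$ a vector $x \in \T^{\J}$ is written as a pair $(x', x'')$ with $x'$ the $[n]$-coordinates and $x''$ the $[n]^*$-coordinates, and that the circuits $c_T \in \T^{\J}$ of $p$ (with $|T| = r_p + 1$) have support inside $T \cup ([n]\setminus T)^*$. The key observation is that the circuit space $\C(p)$ of the tropical Wick vector $p$ (viewed here as a tropical Pl\"ucker vector) splits naturally into two families: those coming from transversals $\bar{T}$ of the form $\bar T = T$ with $|T| = r_p+1$, whose restriction to $\T^n$ is exactly a Pl\"ucker circuit $d_T$ of $p$ and whose $[n]^*$-part is $\infty$; and those of the form $\bar T$ with $|T| = r_p - 1$, whose $[n]^*$-part is a Pl\"ucker cocircuit $d^*_{[n]\setminus T}$ of $p$ (equivalently, a Pl\"ucker circuit of $p^*$) and whose $[n]$-part is $\infty$. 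This is a direct unwinding of the definitions of $c_T$ in Section \ref{seccocycle} together with the definitions of $d_T$ and $d^*_T$ above, using that $\bar p_{\bar T \Delta \{i,i^*\}} = p_{T - i}$ or $p_{T \cup i}$ depending on whether $i \in \bar T$.

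Given this splitting, I would argue as follows. First I would show $\C(p)^\top \supseteq L_p \times L_{p^*}$: if $(y', y'') \in L_p \times L_{p^*}$ and $c \in \C(p)$ is a circuit of the first family (supported on $\T^n$-coordinates, equal to some $\lambda \odot d_T$ there), then the minimum of $y + c$ over $\J$ only involves the $[n]$-coordinates, and it is achieved at least twice because $y' \top d_T$ by definition of $L_p = \PC(p)^\top$; symmetrically, a circuit of the second family is handled by $y'' \in L_{p^*} = \PC(p^*)^\top$. Hence $(y',y'') \top c$ for every circuit, so $(y',y'') \in \C(p)^\top$.

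For the reverse inclusion $\C(p)^\top \subseteq L_p \times L_{p^*}$, take $(y', y'') \in \C(p)^\top$. Testing tropical orthogonality against the circuits of the first family, which agree with the Pl\"ucker circuits $d_T$ on the $[n]$-coordinates and are $\infty$ elsewhere, forces $y' \top d_T$ for every $T$ with $|T| = r_p + 1$, since the $\infty$ entries of $c$ contribute nothing to the minimum; therefore $y' \in \PC(p)^\top = L_p$. Testing against the second family forces $y'' \in \PC(p^*)^\top = L_{p^*}$ in exactly the same way. This gives $(y',y'') \in L_p \times L_{p^*}$, completing the equality. The main obstacle — really the only nontrivial point — is establishing the claimed dichotomy of the circuits $c_T$ and checking that the two families' supports are genuinely confined to the $[n]$- and $[n]^*$-coordinate blocks respectively; once that bookkeeping with the bar-notation and the parity of $|\bar T \cap [n]|$ is in place, both inclusions are immediate from the definitions of $L_p$, $L_{p^*}$, and tropical orthogonality. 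One should also note in passing that both $d_T$ and $d^*_T$ range over genuine Pl\"ucker (co)circuits precisely as in the definitions, so no vectors are missed in either direction.
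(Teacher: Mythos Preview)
Your proposal is correct and follows essentially the same approach as the paper: identify the $\Delta$-matroidal circuits of $p$ as precisely the vectors $(d,\overrightarrow\infty)$ with $d$ a Pl\"ucker circuit of $p$ (arising from $|T|=r_p+1$) and the vectors $(\overrightarrow\infty,d^*)$ with $d^*$ a Pl\"ucker cocircuit of $p$ (arising from $|T|=r_p-1$), after which $\C(p)^\top = L_p \times L_{p^*}$ is immediate from the definitions. The paper compresses your two inclusion arguments into the phrase ``so the result follows directly from the definitions''; a couple of your notational remarks are slightly off (``$\bar T = T$'' is not what you mean, and the dichotomy is governed by the value $|T|\in\{r_p-1,r_p+1\}$ rather than a parity), but these do not affect the argument.
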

\begin{proof}
It is not hard to check that the circuits of $p$ are precisely the vectors of the form $(d, \overrightarrow\infty) \in \T^{\J}$ with $d \in \T^n$ a Pl\"ucker circuit of $p$ (where $\overrightarrow\infty$ denotes the vector in $\T^n$ with all coordinates equal to $\infty$), and of the form $(\overrightarrow\infty, d^*) \in \T^{\J}$ with $d^* \in \T^n$ a Pl\"ucker cocircuit of $p$; so the result follows directly from the definitions.
\end{proof}

The following theorem provides a parametric description of any tropical linear space. It was first proved by Murota and Tamura in \cite{murotatamura}. In the case of realizable tropical linear spaces it also appears in work of Yu and Yuster \cite{yuyuster}. 
\begin{theorem}\label{linearpoly}
Suppose $p \in \T^{\sn}$ is a tropical Pl\"ucker vector. Then the tropical linear space $L_p \subseteq \T^n$ is the tropical convex hull of the Pl\"ucker cocircuits of $p$.
\end{theorem}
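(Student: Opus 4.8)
The plan is to deduce Theorem \ref{linearpoly} directly from the ``type D'' results of the previous section, using Proposition \ref{linearplucker} as the bridge. First I would observe that $p$, being a tropical Pl\"ucker vector, is in particular a tropical Wick vector, so Theorem \ref{admhull} and Corollary \ref{cocycleorth} apply to it. By Proposition \ref{linearplucker} we have $\C(p)^\top = L_p \times L_{p^*}$ under the identification $\T^{\J} \cong \T^n \times \T^n$; restricting to admissible vectors, an admissible vector $(u,v) \in \T^n \times \T^n$ must have $\supp(u) \cap \supp(v)^* = \emptyset$, but for a fixed transversal structure this forces one of $u$, $v$ to be $\overrightarrow\infty$ only on the overlap --- more precisely, the admissible vectors of the form $(u, \overrightarrow\infty)$ inside $\C(p)^\top$ are exactly $L_p \times \{\overrightarrow\infty\}$. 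Thus a cocycle of $p$ supported in $[n]$ (the first copy) is exactly an element of $L_p$ (together with possibly $\infty$'s matching up), and $\Q(p)$ restricted to those coordinates recovers $L_p$.

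Next I would invoke Theorem \ref{admhull}: the cocycle space $\Q(p)$ is the set of admissible vectors in the tropical convex hull of the cocircuits of $p$. By the description of circuits and cocircuits of $p$ in the proof of Proposition \ref{linearplucker}, the cocircuits of $p$ (as a tropical Wick vector) are precisely the vectors $(\overrightarrow\infty, d^*)$ with $d^*$ a Pl\"ucker cocircuit of $p$, together with $(d, \overrightarrow\infty)$ with $d$ a Pl\"ucker cocircuit of $p^*$ --- wait, here I must be careful about which is which: the cocircuits of the Wick vector $p$ are the circuits of $p^*$, and under the identification these land in the second $\T^n$ factor as Pl\"ucker cocircuits of $p$ and in the first factor as Pl\"ucker circuits... the cleanest route is to apply Theorem \ref{admhull} to $p^*$ rather than $p$, so that $\Q(p^*)$ is the set of admissible vectors in the tropical convex hull of the cocircuits of $p^*$, which are exactly the vectors $(d, \overrightarrow\infty)$ with $d \in \PC^*(p)$... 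I will sort out the bookkeeping so that the factor carrying $L_p$ is matched with the generating set $\PC^*(p)$. Concretely: cocircuits of $p$ in the $\T^{\J}$ sense that are supported in the first $\T^n$ are exactly $(d^*, \overrightarrow\infty)$ with $d^* \in \PC^*(p)$ a Pl\"ucker cocircuit of $p$; their tropical convex hull in that factor is a subset of $L_p$, and intersecting with admissible vectors gives all of $\Q(p)$ restricted to that factor, which is $L_p$.

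Finally I would assemble: combining ``$\Q(p) = $ admissible part of $L_p \times L_{p^*}$ and the first factor is $L_p$'' with ``$\Q(p) = $ admissible vectors in the tropical convex hull of the cocircuits of $p$, whose first-factor parts are $\tconvex(\PC^*(p))$'', one gets $L_p = \tconvex(\PC^*(p))$. To be fully rigorous one must check both inclusions: the inclusion $\tconvex(\PC^*(p)) \subseteq L_p$ is immediate since each Pl\"ucker cocircuit lies in $L_p = \PC(p)^\top$ (this is the tropical orthogonality of circuits and cocircuits, Proposition \ref{circperpcocirc}, read in type A) and $L_p$ is tropically convex as an orthogonal set; the reverse inclusion $L_p \subseteq \tconvex(\PC^*(p))$ is the substantive direction and follows from Lemma \ref{lemmaadmhull} applied appropriately, since every element of $L_p$ gives an admissible cocycle in the relevant factor. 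The main obstacle I anticipate is purely notational: keeping straight the duality $\C(p)$ versus $\C^*(p)$ and which $\T^n$ factor of $\T^{\J}$ carries $L_p$ as opposed to $L_{p^*}$, so that the generating cocircuits are indexed by $\PC^*(p)$ and not $\PC(p)$. Once that dictionary is fixed, the proof is a short translation of Theorem \ref{admhull} through Proposition \ref{linearplucker}.
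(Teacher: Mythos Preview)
Your approach is correct and is essentially the same as the paper's: identify the cocircuits of $p$ (as a tropical Wick vector) with the vectors $(d^*, \overrightarrow\infty)$ for $d^* \in \PC^*(p)$ and $(\overrightarrow\infty, d)$ for $d \in \PC(p)$, then combine Proposition~\ref{linearplucker} with Theorem~\ref{admhull}, reading off the first factor via the always-admissible vectors of the form $(x, \overrightarrow\infty)$. The paper's proof is a two-line version of exactly this; your only real difficulty is the bookkeeping you flag, and your final identification of which cocircuits land in which factor is the correct one.
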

\begin{proof}
The cocircuits of $p$ are the vectors of the form $(d^*, \overrightarrow\infty) \in \T^{\J}$ with $d^* \in \T^n$ a Pl\"ucker cocircuit of $p$, and of the form $(\overrightarrow\infty, d) \in \T^{\J}$ with $d \in \T^n$ a Pl\"ucker circuit of $p$; so the result follows from Proposition \ref{linearplucker} and Theorem \ref{admhull}.
\end{proof}

It is instructive to see what Theorem \ref{linearpoly} is saying when applied to tropical Pl\"ucker vectors with only zero and infinity entries (what is sometimes called the ``constant coefficient case'' in tropical geometry). In this case, since the complements of unions of cocircuits of the associated matroid $M$ are exactly the flats of $M$, we get precisely the description of the tropical linear space in terms of the flats of $M$ that was given by Ardila and Klivans in \cite{ardila}.

Another useful application to the study of tropical linear spaces is the following. It was also proved by Murota and Tamura in \cite{murotatamura}.
\begin{theorem}\label{dual}
If $p \in \T^{\sn}$ is a tropical Pl\"ucker vector then $L_{p^*} = L_p^\top$. In particular, for any tropical linear space $L$, we have $(L^\top)^\top = L$.
\end{theorem}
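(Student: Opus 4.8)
The statement to prove is Theorem \ref{dual}: for a tropical Pl\"ucker vector $p \in \T^{\sn}$ we have $L_{p^*} = L_p^\top$, and consequently $(L^\top)^\top = L$ for any tropical linear space $L$. My plan is to deduce this directly from the ``type D'' machinery already assembled, specializing Corollary \ref{cocycleorth} along the product decomposition of Proposition \ref{linearplucker}. The point is that $\C(p)^\top = L_p \times L_{p^*}$ inside $\T^{\J} \cong \T^n \times \T^n$, and cocycles (admissible vectors in $\C(p)^\top$) are exactly those elements of this product that are supported on an admissible subset of $\J$; a vector $(x,y)$ with $x,y \in \T^n$ is admissible precisely when for each $i$ at most one of $x_i, y_i$ is finite.

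\emph{Step 1.} First I would unwind what $\Q(p)$ and $\Q(p^*)$ are in the Pl\"ucker setting. By Proposition \ref{linearplucker}, $\C(p)^\top = L_p \times L_{p^*}$, and by the same argument applied to $p^*$ (using $(p^*)^* = p$) we get $\C(p^*)^\top = L_{p^*} \times L_p$. Intersecting with the admissible vectors, $\Q(p)$ consists of the admissible elements of $L_p \times L_{p^*}$; in particular the vectors $(x, \overrightarrow\infty)$ with $x \in L_p$ and $(\overrightarrow\infty, y)$ with $y \in L_{p^*}$ all lie in $\Q(p)$, and likewise for $\Q(p^*)$ with the factors swapped.

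\emph{Step 2.} Now apply Corollary \ref{cocycleorth}: $\Q(p^*)$ is the set of admissible vectors in $\Q(p)^\top$. I would feed in the special admissible cocycles from Step 1. Given $y \in L_{p^*}$, the vector $(\overrightarrow\infty, y)$ is in $\Q(p)$, so any element of $\Q(p)^\top$ is tropically orthogonal to it; restricting attention to vectors of the form $(x,\overrightarrow\infty)$ forces nothing, but restricting to $(\overrightarrow\infty, x)$ we learn $x \top y$ coordinatewise in the second factor. Conversely, $(x,\overrightarrow\infty) \in \Q(p)$ for every $x \in L_p$. Chasing these through, the admissible vectors of $\Q(p)^\top$ of the form $(\overrightarrow\infty, z)$ are exactly those $z \in \T^n$ that are tropically orthogonal to every $x \in L_p$, i.e. $z \in L_p^\top$; but by Step 1 (applied to $p^*$) these are also exactly the $z \in L_{p^*}$. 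Comparing the two descriptions gives $L_{p^*} = L_p^\top$.

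\emph{Step 3 and the obstacle.} For the final sentence, write $L = L_p$ for some tropical Pl\"ucker vector $p$ (every tropical linear space arises this way by definition), so that $L^\top = L_{p^*}$ by what we just proved; applying the result once more to the tropical Pl\"ucker vector $p^*$ gives $(L^\top)^\top = (L_{p^*})^\top = L_{(p^*)^*} = L_p = L$. The main thing to be careful about — and the only real obstacle — is the bookkeeping around admissibility and the two ``slots'' of $\T^n \times \T^n$: one must check that taking tropical orthogonal complements interacts correctly with the restriction to admissible vectors, i.e. that no element of $L_p^\top$ is lost or spuriously gained when passing through $\Q(p)^\top$ and back. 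Concretely I would verify the two inclusions $L_{p^*} \subseteq L_p^\top$ and $L_p^\top \subseteq L_{p^*}$ separately: the first is immediate from Proposition \ref{circperpcocirc} (every Pl\"ucker cocircuit of $p^*$ is a Pl\"ucker circuit of $p$, hence orthogonal to all of $L_p$) together with tropical convexity of $L_p^\top$ and Theorem \ref{linearpoly}; the second uses Corollary \ref{cocycleorth} as above, noting that a vector $z \in L_p^\top$ yields the admissible cocycle $(\overrightarrow\infty, z) \in \Q(p)^\top$, which Corollary \ref{cocycleorth} places in $\Q(p^*)$, whence $z \in L_{p^*}$.
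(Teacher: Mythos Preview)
Your overall strategy is exactly the paper's: specialize Corollary~\ref{cocycleorth} through the product decomposition $\C(p)^\top = L_p \times L_{p^*}$ of Proposition~\ref{linearplucker}, and read off the result on the slice $\T^n \times \{\overrightarrow\infty\}$. The paper's proof is literally the one-line version of this.

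However, your Step~2 (and the second inclusion in Step~3) has the two slots of $\T^n \times \T^n$ swapped, which is precisely the bookkeeping hazard you yourself flagged. With the identification in Proposition~\ref{linearplucker}, the first factor corresponds to $[n]$ and carries $L_p$, while the second carries $L_{p^*}$. So for a vector $(\overrightarrow\infty, z)$, tropical orthogonality to $(a,b)\in\Q(p)$ reduces to $z \top b$, and the second components $b$ range exactly over $L_{p^*}$; hence $(\overrightarrow\infty, z)\in\Q(p)^\top$ iff $z\in L_{p^*}^\top$, not $z\in L_p^\top$ as you wrote. Likewise, since $\C(p^*)^\top = L_{p^*}\times L_p$, the condition $(\overrightarrow\infty, z)\in\Q(p^*)$ gives $z\in L_p$, not $z\in L_{p^*}$. (Your two errors cancel, so you land on an equivalent statement, but the argument as written is incorrect at both steps.)

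The fix is simply to work in the first slot, as the paper does: take $(z,\overrightarrow\infty)$. Then $(z,\overrightarrow\infty)\in\Q(p)^\top$ iff $z\top a$ for every first component $a$ of a cocycle; since $(a,\overrightarrow\infty)\in\Q(p)$ for all $a\in L_p$ and every cocycle has first component in $L_p$, this is exactly $z\in L_p^\top$. On the other hand, $(z,\overrightarrow\infty)$ is admissible, so by Corollary~\ref{cocycleorth} it lies in $\Q(p^*)\subseteq L_{p^*}\times L_p$, giving $z\in L_{p^*}$. This yields $L_p^\top = L_{p^*}$ directly, and your Step~3 deduction of $(L^\top)^\top = L$ then goes through unchanged.
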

\begin{proof}
By Proposition \ref{linearplucker} we have that $L_{p^*}= \C(p^*)^\top \cap (\T^n \times \{ \overrightarrow\infty \} ) = \Q(p^*) \cap (\T^n \times \{ \overrightarrow\infty \} )$, so the result follows from Corollary \ref{cocycleorth}.
\end{proof}

One can also apply these ideas to prove the following result of Speyer in \cite{speyer}.
\begin{proposition}\label{corres}
There is a bijective correspondence between tropical linear spaces and tropical Pl\"ucker vectors (up to tropical scalar multiplication).
\end{proposition}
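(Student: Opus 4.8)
The plan is to exhibit mutually inverse maps between the set of tropical Pl\"ucker vectors (modulo tropical scaling) and the set of tropical linear spaces. In one direction we already have the map $p \mapsto L_p = \PC(p)^\top$. The key point is to construct the inverse: given a tropical linear space $L$, recover the tropical Pl\"ucker vector $p$ (up to adding a multiple of $\textbf{1}$) from which it came. Since $L_p$ is a pure polyhedral complex of dimension $r_p$ (as recalled in the discussion of \cite{speyer}), the rank $r_p$ is determined by $L$, so it suffices to reconstruct the $p_S$ for $|S| = r_p$ up to a global additive constant.

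First I would recover the Pl\"ucker cocircuits of $p$ from $L = L_p$. By Theorem \ref{linearpoly}, $L_p$ is the tropical convex hull of the Pl\"ucker cocircuits of $p$; moreover, by Proposition \ref{lineardep} (specialized to the matroid case via Proposition \ref{linearplucker}), the cocircuits are exactly the elements of the cocycle space with minimal nonempty support, and any two Pl\"ucker cocircuits with the same support differ by a tropical scalar. Hence the set of Pl\"ucker cocircuits of $p$, up to a single common additive constant, is intrinsic to $L$: they are the vectors in $L$ with inclusion-minimal nonempty support, one for each such support. Once the cocircuits $d^*_T$ (for $|T| = n - r_p - 1$) are known up to scaling, one reads off the values $p_{T \cup i} = (d^*_T)_i$; a standard connectivity/consistency argument over the matroid $M_p$ (whose bases are $\supp(p)$, itself recovered as the supports appearing) shows that all these readings can be simultaneously normalized to a single vector $p \in \T^{\sn}$, well-defined up to adding a multiple of $\textbf{1}$. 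Here one must check the ``cocircuit axiom'' type compatibility: for two cocircuits sharing enough common coordinates, the overlapping values force a consistent choice of the scalars.

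Then I would verify that the two constructions are mutually inverse. That $p \mapsto L_p \mapsto p$ is the identity (up to scaling) follows from the reconstruction just described applied to $L_p$. For the other composition, given a tropical linear space $L$, running the reconstruction produces a tropical Pl\"ucker vector $p$ (one should check the tropical Pl\"ucker relations hold, which follows because the $d^*_T$ were genuine cocircuits glued consistently, equivalently because $\supp(p)$ is a matroid and the $3$-term relations hold by Corollary \ref{3termplucker}); then $L_p$ is the tropical convex hull of those same cocircuits by Theorem \ref{linearpoly}, which is $L$ itself since a tropical polytope is the tropical convex hull of its cocircuit-like extreme generators. One also uses Theorem \ref{dual} ($(L^\top)^\top = L$) to make sure no information is lost in passing $L = \PC(p)^\top$.

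The main obstacle will be the consistency/normalization step: showing that the Pl\"ucker cocircuits read off from $L$, each only determined up to its own additive scalar, can be glued into a single vector $p$ satisfying the Pl\"ucker relations, and that this $p$ is unique up to a global additive constant. This is essentially the valuated analog of reconstructing a matroid from its cocircuits, and the cleanest route is probably to invoke the correspondence between $\Q(p)$, its cocircuits, and $p$ already packaged in Proposition \ref{lineardep} and Theorem \ref{admhull}, transported to type $A$ via Proposition \ref{linearplucker}; the combinatorial bookkeeping of signs is absent here (everything is additive over $\T$), so the difficulty is purely in the connectivity argument linking the scalars of different cocircuits, which one can handle by reducing to the case of a connected matroid and chaining cocircuits that pairwise overlap.
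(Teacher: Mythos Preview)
Your approach uses the same ingredients as the paper's proof: Proposition~\ref{linearplucker} to pass from $L_p$ to $\C(p)^\top = L_p \times L_{p^*}$, Theorem~\ref{dual} to compute $L_{p^*} = L_p^\top$, and Proposition~\ref{lineardep} to extract the cocircuits (hence $p$) from $\C(p)^\top$. So the core strategy is the same.

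However, you overcomplicate the argument in one respect. In this paper a \emph{tropical linear space} is by definition a set of the form $L_p$ for some tropical Pl\"ucker vector $p$; surjectivity of $p \mapsto L_p$ is therefore automatic, and the entire content of the proposition is injectivity up to tropical scaling. Framed that way, your ``main obstacle''---gluing the individually-scaled cocircuits into a single $p$ and then verifying the Pl\"ucker relations---largely evaporates. You are not building $p$ from nothing: you already have two tropical Pl\"ucker vectors $p$ and $q$ with $L_p = L_q$, and you only need $p_B - p_{B'} = q_B - q_{B'}$ for all bases $B,B'$. Since the cocircuit sets agree (Proposition~\ref{lineardep}), each Pl\"ucker cocircuit gives $p_{Ti} - p_{Tj} = q_{Ti} - q_{Tj}$; chaining single basis exchanges from $B$ to $B'$ then yields the desired equality without any well-definedness check and without needing to re-verify the Pl\"ucker relations. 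The paper's proof is terse precisely because, once one sees that only injectivity is at stake, the passage ``cocircuits $\Rightarrow$ $p$ up to scalar'' is this short connectivity observation rather than a full reconstruction.
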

\begin{proof}
Propositions \ref{linearplucker} and \ref{dual} show that one can recover $\C(p)^\top$ from the tropical linear space $L_p$. Proposition \ref{lineardep} shows that one can recover the cocircuits of $p$ (and thus $p$, up to a scalar multiple of $\textbf{1}$) from $\C(p)^\top$.  
\end{proof}

\section{Isotropical Linear Spaces}\label{secisotropical}

\begin{definition}
Let $L \subseteq \T^{\J}$ be an $n$-dimensional tropical linear space. We say that $L$ is (totally) \textbf{isotropic} if
for any two $x, y \in L$ we have that the minimum
\[
\min ( \, x_1 + y_{1^*} \, , \, \dotsc \, , \, x_n + y_{n^*} \, , \, x_{1^*} + y_1 \, , \, \dotsc \, , \, x_{n^*} + y_n \, ) 
\]
is achieved at least twice (or it is equal to $\infty$). In this case, we also say that $L$ is an \textbf{isotropical linear space}. Note that if $K = \puiseux$ and $V = K ^{\J}$, the tropicalization of any $n$-dimensional isotropic subspace $U$ of $V$ (see Section \ref{secspinor}) is an isotropical linear space $L \subseteq \T^{\J}$. In this case we say that $L$ is \textbf{isotropically realizable} by $U$.
\end{definition}
Not all isotropical linear spaces that are realizable are isotropically realizable. As an example of this, take $n=2$ and let $L \subseteq \T^{\J}$ be the tropicalization of the rowspace of the matrix
\[
\bordermatrix{
& \textbf{1} & \textbf{2} & \textbf{1}^* & \textbf{2}^* \cr
& 1 & 0 & 1 & 2 \cr
& 0 & 1 & 1 & 1
}.
\]
The tropical linear space $L$ is a realizable isotropical linear space (as can be seen from Theorem \ref{isotropical}), but it is easy to check that it cannot be isotropically realizable.

We mentioned in Section \ref{secspinor} that if $U$ is an isotropic linear subspace then its vector of Wick coordinates $w$ carries all the information of $U$. One might expect something similar to hold tropically, that is, that the valuation of the Wick vector $w$ still carries all the information of the tropicalization of $U$. This is not true, as the next example shows.

\begin{example}
We present two $n$-dimensional isotropic linear subspaces of $\puiseux^{\J}$ whose corresponding tropicalizations are distinct tropical linear spaces, but whose Wick coordinates have the same valuation. Take $n=4$. Let $U_1$ be the $4$-dimensional isotropic linear subspace of $\puiseux^{\textbf{8}}$ defined as the rowspace of the matrix
\[
M_1 = 
\bordermatrix{
& \textbf{1} & \textbf{2} & \textbf{3} & \textbf{4} & \textbf{1}^* & \textbf{2}^* & \textbf{3}^* & \textbf{4}^* \cr
& 1 & 0 & 0 & 0 & 0 & 1 & 2 & 2 \cr
& 0 & 1 & 0 & 0 & -1 & 0 & 1 & 2 \cr
& 0 & 0 & 1 & 0 & -2 & -1 & 0 & 1 \cr
& 0 & 0 & 0 & 1 & -2 & -2 & -1 & 0
},
\]
and $U_2$ be the $4$-dimensional isotropic linear subspace of $\puiseux^{\textbf{8}}$ defined as the rowspace of 
\[
M_2 = 
\bordermatrix{
& \textbf{1} & \textbf{2} & \textbf{3} & \textbf{4} & \textbf{1}^* & \textbf{2}^* & \textbf{3}^* & \textbf{4}^* \cr
& 1 & 0 & 0 & 0 & 0 & 1 & 2 & 4 \cr
& 0 & 1 & 0 & 0 & -1 & 0 & 1 & 2 \cr
& 0 & 0 & 1 & 0 & -2 & -1 & 0 & 1 \cr
& 0 & 0 & 0 & 1 & -4 & -2 & -1 & 0}.
\]
Their corresponding tropical linear spaces $L_1$ and $L_2$ are distinct since, for example, the Pl\"ucker coordinate indexed by the subset $343^*4^*$ is nonzero for $U_1$ but zero for $U_2$. However, the Wick coordinates of $U_1$ and $U_2$ are all nonzero scalars (the ones indexed by even subsets), and thus their valuations give rise to the same tropical Wick vector.
\end{example}

It is important to have an effective way for deciding if a tropical linear space is isotropical or not. For this purpose, if $v \in \T^{\J}$, we call its \textbf{reflection} to be the vector $v^r \in \T^{\J}$ defined as $v^r_i := v_{i^*}$. If $X \subseteq \T^{\J}$ then its reflection is the set $X^r := \{ x^r : x \in X \}$. The following theorem gives us a simple criterion for identifying isotropical linear spaces.
\begin{theorem}\label{isotropical}
Let $L \subseteq \T^{\J}$ be a tropical linear space with associated tropical Pl\"ucker vector $p$ (whose finite coordinates are indexed by $n$-sized subsets of $\J$). Then the following are equivalent:
\begin{enumerate}
\item $L$ is an $n$-dimensional isotropical linear space.
\item $L^\top = L^r$.
\item $p_{\J \setminus T}=p_{T^*}$ for all $T \subseteq \J$ of size $n$.
\end{enumerate}
\end{theorem}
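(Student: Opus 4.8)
The plan is to translate the three conditions into the language of Section~\ref{seccocycle} (applied with ground set $\J$ in place of $[n]$), reducing everything to identities among $p$ and its two natural transforms: the dual $p^*$, with $p^*_T := p_{\J\setminus T}$, and the \emph{reflection} $p^r$, with $p^r_T := p_{T^*}$, both tropical Pl\"ucker vectors of rank $n$ on $\J$. The two bookkeeping identifications I will use are: $L^\top = L_{p^*}$, which is Theorem~\ref{dual}; and $L^r = L_{p^r}$, which holds because reflection permutes the coordinates of $\T^{\J}$ by the involution $*$ and hence carries tropical linear spaces to tropical linear spaces. I will also unwind tropical orthogonality once: since $(y^r)_j = y_{j^*}$, the minimum in the definition of isotropy equals $\min_{j\in\J}(x_j + (y^r)_j)$, so (1) says precisely that $x \top y^r$ for all $x,y\in L$, i.e. $L^r\subseteq L^\top$. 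This is the tropical analogue of the classical fact that an $n$-dimensional subspace $W$ is $Q$-isotropic if and only if $W^r\subseteq W^\perp$, and since $W^\perp$ is itself $n$-dimensional this forces $W^r=W^\perp$. Note also that $L=L_p$ is automatically $n$-dimensional because $p$ has rank $n$.

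With these reformulations I would first dispose of the purely combinatorial equivalence $(2)\Leftrightarrow(3)$. Condition (3) is literally the equation $p^* = p^r$ (both vectors are supported on the $n$-subsets of $\J$), so $(3)\Rightarrow(2)$ is immediate from $L^\top=L_{p^*}$ and $L^r=L_{p^r}$. For $(2)\Rightarrow(3)$ I would invoke Proposition~\ref{corres}: $L_{p^*}=L_{p^r}$ forces $p^*_T = p^r_T + c$ for all $n$-subsets $T\subseteq\J$ and some $c\in\R$; then substituting $T\mapsto T^*$ and $T\mapsto\J\setminus T$ in this identity and using $T^{**}=T$ and $(\J\setminus T)^*=\J\setminus T^*$ yields $p_T = p_T + 2c$ for every $T$ in the nonempty support of $p$, hence $c=0$ and (3) holds. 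The implication $(2)\Rightarrow(1)$ is then immediate as well, since $L^\top=L^r$ gives $L^r\subseteq L^\top$, which is exactly the isotropy reformulation above (and $L$ is $n$-dimensional by hypothesis).

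The one substantive step is $(1)\Rightarrow(2)$, which I expect to be the main obstacle. From (1) we have the inclusion $L^r\subseteq L^\top$, and we must promote it to an equality; it is important to note that no purely formal manipulation with the involutive duality $(\cdot)^\top$ (Theorem~\ref{dual}) and the involution $(\cdot)^r$ suffices, since such manipulations merely reshuffle the single inclusion among its equivalent forms. What is needed instead is the geometric fact that \emph{a tropical linear space of dimension $n$ cannot be a proper subset of a tropical linear space of dimension $n$}. Both $L^r=L_{p^r}$ and $L^\top=L_{p^*}$ are pure of dimension $n$, connected in codimension one, and balanced with all weights equal to $1$; a proper inclusion would, using connectivity in codimension one of $L^\top$, produce a codimension-one cell adjacent to both a maximal cell in $L^r$ and one not in $L^r$, at which the outward primitive directions of the missing maximal cells sum to zero, which is impossible because the star of a tropical linear space at such a cell is the Bergman fan of a rank-two matroid and its rays admit no nontrivial proper linear relation. (Alternatively one can simply cite this as a known property of tropical linear spaces, e.g. from Speyer~\cite{speyer} or the general theory of matroid fans.) Granting it, $L^r=L^\top$, which is (2). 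Combining $(2)\Leftrightarrow(3)$, $(2)\Rightarrow(1)$, and $(1)\Rightarrow(2)$ gives the theorem.
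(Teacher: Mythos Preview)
Your overall architecture is exactly the paper's: reformulate (1) as $L^r\subseteq L^\top$, identify (3) with $p^*=p^r$, deduce $(2)\Leftrightarrow(3)$ from Proposition~\ref{corres} and Theorem~\ref{dual}, and reduce $(1)\Rightarrow(2)$ to the statement that a tropical linear space cannot be properly contained in one of the same dimension. Your treatment of $(2)\Rightarrow(3)$ is in fact more careful than the paper's, which silently ignores the projective scalar; your substitution argument showing $c=0$ is correct and worth keeping.

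The one point of genuine divergence is how you justify the ``no proper inclusion'' step. The paper isolates this as Lemma~\ref{tropequal} and proves it combinatorially: from $L_{p_1}\subseteq L_{p_2}$ and Theorem~\ref{linearpoly} one gets that every cocircuit of $M_1$ is a union of cocircuits of $M_2$, so $M_2^*$ is a quotient of $M_1^*$; equal rank then forces $M_1=M_2$, and Proposition~\ref{lineardep} upgrades this to equality of the valuated cocircuits and hence of the linear spaces. Your route via balancing and the local structure at a codimension-one face (star equal to a rank-two Bergman fan, whose rays admit no proper nontrivial zero relation) is also valid, but as written it is a sketch: you would need to pass to a common polyhedral refinement, work on the finite part $L\cap\R^{\J}$ where purity and balancing are actually stated, and check connectivity in codimension one carefully. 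The paper's matroid-quotient argument avoids all of this polyhedral bookkeeping and stays entirely within the tools already developed in Section~\ref{seccocycle}, so it is both shorter and more self-contained; your geometric argument, once made precise, has the virtue of applying to any balanced weight-one fan, not just tropical linear spaces.
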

\begin{proof}
By Proposition \ref{corres} we know that two tropical linear spaces are equal if and only if their corresponding tropical Pl\"ucker vectors are equal, so $(2) \leftrightarrow (3)$ follows from Theorem \ref{dual}. To see that $(1) \leftrightarrow (2)$, note that $L$ is an isotropical linear space if and only if $L$ is tropically orthogonal to the reflected tropical linear space $L^r$, that is, if and only if $L^r \subseteq L^\top$. Since $\dim(L^\top) = 2n - \dim(L) = 2n - \dim(L^r)$, the result follows from Lemma \ref{tropequal} below.
\end{proof}
\begin{lemma}\label{tropequal}
If $L_1 \subseteq L_2$ are two tropical linear spaces of the same dimension then $L_1 = L_2$.
\end{lemma}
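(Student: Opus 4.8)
The plan is to reduce the statement to the fact, proved by Speyer, that a tropical linear space of rank $r$ is a pure polyhedral complex of dimension $r$ (in its finite part), together with the correspondence between tropical linear spaces and tropical Pl\"ucker vectors from Proposition \ref{corres}. Write $p^{(1)}$ and $p^{(2)}$ for the tropical Pl\"ucker vectors associated to $L_1$ and $L_2$; by hypothesis they have the same rank $r$. I want to conclude $p^{(1)} = p^{(2)}$ up to tropical scaling, whence $L_1 = L_2$.

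First I would recall that $L_i \cap \R^n$ is a pure polyhedral complex of dimension $r = r_{p^{(i)}}$; since $L_1 \subseteq L_2$ and both are pure of the same dimension $r$, every top-dimensional cell of $L_1$ is contained in a top-dimensional cell of $L_2$, and conversely $L_1$ meets the relative interior of every top-dimensional cell of $L_2$ that it touches. The key point is that a tropical linear space determines, and is determined by, its top-dimensional cells: the cocircuits of $p$ can be recovered from $L_p$ (this is exactly the content of Proposition \ref{lineardep} combined with Proposition \ref{linearplucker} and Theorem \ref{dual}, as used in the proof of Proposition \ref{corres}), and from the cocircuits one recovers $p$ up to adding a multiple of $\mathbf{1}$. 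So it suffices to show that $L_1$ and $L_2$ have the same cocircuits, equivalently that $L_1^\top = L_2^\top$.

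The cleanest route is: from $L_1 \subseteq L_2$ we get $L_2^\top \subseteq L_1^\top$ by taking tropical orthogonal complements. Now $L_1^\top$ and $L_2^\top$ are again tropical linear spaces (by Theorem \ref{dual}, $L_i^\top = L_{(p^{(i)})^*}$), of dimensions $2n - r$ wait --- here I should be careful, the ambient dimension in the lemma's application is $2n$ but the lemma is stated for $\T^n$; in $\T^N$ the orthogonal of a rank-$r$ tropical linear space is a tropical linear space of rank $N - r$. Thus $L_2^\top \subseteq L_1^\top$ are tropical linear spaces of the same dimension $N-r$, and we are in the same situation with the inclusion reversed. Applying this observation once more gives $L_1 = (L_1^\top)^\top \supseteq (L_2^\top)^\top = L_2$, and combined with $L_1 \subseteq L_2$ this forces $L_1 = L_2$ --- but this looks circular, since $(L^\top)^\top = L$ is Theorem \ref{dual} itself and the containment $L_2^\top \subseteq L_1^\top$ only gives $(L_1^\top)^\top \subseteq (L_2^\top)^\top$, i.e. $L_1 \subseteq L_2$ again. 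So the orthogonality trick alone does not close the loop; I genuinely need the dimension/purity input.

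The honest argument, then, is the polyhedral one: take a generic point $x$ in the relative interior of a top-dimensional cell $\sigma$ of $L_2$. Since $L_1$ is pure of dimension $r = \dim L_2$ and $L_1 \subseteq L_2$, if $L_1$ were a proper subcomplex it would miss some top cell $\sigma$ of $L_2$ entirely (a pure $r$-complex contained in another cannot meet the relative interior of a top cell of the larger one without containing that whole cell, by invariance of domain / dimension count). One then shows this cannot happen: the local structure of $L_2$ near a generic point of $\sigma$ determines a flat of the underlying matroid and a valuated-matroid datum that must agree between $p^{(1)}$ and $p^{(2)}$ on that portion, and since tropical linear spaces of equal rank are connected in codimension one, agreement on one top cell propagates to all of them. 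I expect the main obstacle to be exactly this propagation step --- making precise that "same top-dimensional cells" forces "same tropical Pl\"ucker vector" --- which is cleanest to handle by invoking Proposition \ref{corres}: it is enough to exhibit that $L_1$ and $L_2$ have a common top-dimensional cell, extract the common cocircuits supported there, and then use that all cocircuits are recovered from the global tropical linear space. I would therefore organize the write-up as: (1) purity and equal dimension force a shared top cell; (2) a shared top cell forces a shared cocircuit; (3) Propositions \ref{lineardep} and \ref{corres} upgrade a shared cocircuit, via the global orthogonal complement, to equality of all of $L_1^\top = L_2^\top$ and hence $L_1 = L_2$.
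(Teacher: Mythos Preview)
Your proposal has a genuine gap: the three-step polyhedral argument you outline at the end does not go through as stated, and you have not identified the mechanism that actually forces equality.

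Step (2), ``a shared top cell forces a shared cocircuit,'' is unjustified: top-dimensional cells of $L_p$ correspond to bases of the underlying matroid (loci where a fixed $p_B + \langle v, e_B\rangle$ is uniquely minimal), not to cocircuits, so it is unclear how to extract a common cocircuit from a common top cell. Step (3) is simply false as written: knowing that $L_1$ and $L_2$ share \emph{one} cocircuit is far from enough to conclude $L_1^\top = L_2^\top$; Proposition \ref{corres} needs \emph{all} cocircuits, and you have no propagation device to go from one to all. You correctly flag this propagation as ``the main obstacle,'' but you never supply an argument for it, and the connectedness-in-codimension-one remark does not do the job without substantial additional work comparing the two valuated matroids cell by cell.

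The paper's proof avoids polyhedral geometry entirely and instead passes to the underlying (ordinary) matroids. From $L_1 \subseteq L_2$ and Theorem \ref{linearpoly} one gets that every Pl\"ucker cocircuit of $p_1$ lies in the tropical convex hull of the Pl\"ucker cocircuits of $p_2$; taking supports, every cocircuit of $M_1$ is a union of cocircuits of $M_2$. This is exactly the condition that $M_2^*$ is a \emph{quotient} of $M_1^*$, and a quotient of a matroid of the same rank is the matroid itself, so $M_1 = M_2$. Once the underlying matroids agree, Proposition \ref{lineardep} (cocircuits of $p$ with a given support are unique up to tropical scaling) forces the valuated cocircuits of $p_1$ and $p_2$ to coincide, hence $L_1 = L_2$. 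The missing idea in your attempt is this matroid-quotient step; it is what replaces the vague ``propagation'' you were looking for.
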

\begin{proof}
Let $p_1$ and $p_2$ be the corresponding tropical Pl\"ucker vectors, and let $M_1$ and $M_2$ be their associated matroids. By Theorem \ref{linearpoly} we have that every cocircuit of $p_1$ is in the tropical convex hull of the cocircuits of $p_2$, so in particular, any cocircuit of $M_1$ is a union of cocircuits of $M_2$. This is saying that $M_2^*$ is a quotient of $M_1^*$ (see \cite{oxley}, Proposition 7.3.6), and since $M_1^*$ and $M_2^*$ have the same rank, we have $M_1^* =M_2^*$ (\cite{oxley}, Corollary 7.3.4). But then, in view of Proposition \ref{lineardep} and Proposition \ref{linearplucker}, the cocircuits of $p_1$ and $p_2$ are the same, so in fact $L_1 = L_2$.
\end{proof}
Note that Theorem \ref{isotropical} describes the set of isotropical linear spaces (or more precisely, their associated tropical Pl\"ucker vectors) as an intersection of the Dressian $\Dr(n,2n)$ with a linear subspace.

If $L$ is an isotropical linear space which is isotropically realizable by $U$ then we have seen that the valuation $p$ of the Wick vector $w$ associated to $U$ does not determine $L$. Nonetheless, the following theorem shows that $p$ does determine the admissible part of $L$.
\begin{theorem}\label{isoadm}
Let $L \subseteq \T^{\J}$ be an $n$-dimensional isotropical linear space which is isotropically realizable by the subspace $U \subseteq \puiseux^{\J}$. Let $p \in \T^{\sn}$ be the tropical Wick vector obtained as the valuation of the Wick vector $w$ associated to $U$. Then the set of admissible vectors in $L$ is the cocycle space $\Q(p) \subseteq \T^{\J}$.
\end{theorem}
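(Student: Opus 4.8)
The plan is to deduce the theorem from two auxiliary facts: every cocircuit of $p$ lies in $L$, and every circuit of $p$ lies in $L^\top$. Here $L = \trop(U)$, which by the results of Speyer \cite{speyer} is the tropical linear space associated to the tropical Pl\"ucker vector of $U$; I will also use the elementary fact that $\val(y) \in L$ for every $y \in U$. The starting point is that isotropy of $U$ is equivalent to $U^\perp = U^r$, where $U^\perp$ is the orthogonal complement with respect to the standard pairing $\langle x,y \rangle = \sum_{k \in \J} x_k y_k$ and $v^r$ is the reflection defined by $v^r_i := v_{i^*}$. Indeed $Q(x,y) = \langle x, y^r \rangle$, so $U$ isotropic means $U^r \subseteq U^\perp$, and comparing dimensions ($\dim U^r = \dim U = n = \dim U^\perp$) forces $U^\perp = U^r$, hence also $(U^r)^\perp = U$.

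To see that the cocircuits of $p$ lie in $L$, fix $T \subseteq [n]$ and let $\ell_T \in K^{\J}$ be the coefficient vector of the linear form indexed by $T$ in the description \eqref{isowick} of $U$. Since $U$ is contained in the kernel of that form, $\ell_T \in U^\perp = U^r$, so $\ell_T^r \in U$. Unwinding the indexing conventions — the coordinate $x_i$ for $i \in T$ carries the Wick coordinate $w_{T-i}$, the coordinate $x_{j^*}$ for $j \notin T$ carries $w_{Tj}$, and $\bar T = T \cup ([n] \setminus T)^*$ — one checks directly against the definition of the cocircuit $c^*_{\bar T}$ of $p$ that $\val(\ell_T^r) = c^*_{\bar T}$; the signs $(-1)^{\sg(\cdot,T)}$ are irrelevant since we take valuations. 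Hence $c^*_{\bar T} = \val(\ell_T^r) \in L$ for every $T$.

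For the circuits, a short computation straight from the definitions shows that $(c_{\bar S})^r = c^*_{\bar S}$ for any tropical Wick vector: using that $\bar S$ is a transversal (so $i \in \bar S$ iff $i^* \notin \bar S$), both coordinates $(c_{\bar S})^r_i$ and $(c^*_{\bar S})_i$ equal $\bar p_{\bar S \Delta \{i,i^*\}}$ when $i \notin \bar S$ and equal $\infty$ otherwise. Thus the circuits of $p$ are exactly the reflections of its cocircuits, and since $L$ is isotropical we have $L^r = L^\top$ by Theorem \ref{isotropical}; therefore every circuit of $p$ lies in $L^\top$. The theorem now follows from the two facts. Since $L$ is tropically convex — being an intersection of sets of the form $\{y\}^\top$, as in the proof of Theorem \ref{admhull} — and contains every cocircuit of $p$, it contains the tropical convex hull of the cocircuits of $p$; by Theorem \ref{admhull} the set of admissible vectors in that hull is precisely $\Q(p)$, so $\Q(p)$ is contained in the set of admissible vectors of $L$. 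Conversely, if $x \in L$ is admissible, then every circuit $c$ of $p$, lying in $L^\top$, is tropically orthogonal to $x$; hence $x \in \C(p)^\top$ and, being admissible, $x$ is a cocycle of $p$, i.e. $x \in \Q(p)$.

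The step I expect to be the main obstacle is the verification that $\val(\ell_T^r) = c^*_{\bar T}$: it requires carefully matching the indexing in \eqref{isowick}, the extension map $S \mapsto \bar S$, and the defining formula for $c^*_{\bar T}$ to confirm that the reflected defining form of $U$ attached to $T$ carries exactly the Wick coordinates $w_{T-i}$ ($i \in T$) and $w_{Tj}$ ($j \notin T$) appearing in $c^*_{\bar T}$. Everything else is a direct chase through Theorems \ref{admhull} and \ref{isotropical} and the basic properties of tropicalization.
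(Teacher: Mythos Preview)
Your proof is correct and shares the same skeleton as the paper's: establish that the cocircuits of $p$ lie in $L$ and that the circuits of $p$ lie in $L^\top$, then conclude both inclusions via tropical convexity of $L$ and Theorem~\ref{admhull}. The only real difference is in how the two auxiliary facts are linked. The paper reads the circuits $c_{\bar T}=\val(\ell_T)\in L^\top$ directly from \eqref{isowick}, and for the other direction observes that $U^\perp$ is again isotropic with Wick vector of valuation $p^*$, so the same argument applied to $U^\perp$ and $L^\top$ places the circuits of $p^*$ (i.e.\ the cocircuits of $p$) inside $(L^\top)^\top=L$. You instead use reflection: from $U^\perp=U^r$ you get $\ell_T^r\in U$, hence $c^*_{\bar T}=\val(\ell_T^r)\in L$ directly, and then recover the circuit side from the identity $(c_{\bar S})^r=c^*_{\bar S}$ together with $L^\top=L^r$ from Theorem~\ref{isotropical}. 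Your route trades the paper's unproved claim about the Wick vector of $U^\perp$ for an already-established theorem and a one-line reflection identity, which is a bit cleaner; beyond this the two arguments coincide, and the indexing check you flag as the main obstacle is exactly the computation implicit in the paper's first sentence.
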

\begin{proof}
Equation \eqref{isowick} in Section \ref{secspinor} implies that the circuits of $p$ are tropically orthogonal to all the elements of $L$, so $L \subseteq \C(p)^\top$ and thus the admissible vectors of $L$ are in $\Q(p)$. On the other hand, it can be easily checked that the valuation of the Wick vector associated to the isotropic subspace $U^\perp$ is precisely the dual tropical Wick vector $p^*$, so repeating the same argument we have that $L^\top \subseteq \C(p^*)^\top$. Taking orthogonal sets we get that $L \supseteq (\C(p^*)^\top)^\top \supseteq \C(p^*)$, and since $L$ is tropically convex, Theorem \ref{admhull} implies that the set of admissible vectors in $L$ contains $\Q(p)$.
\end{proof} 
It would be very interesting to find a generalization of Theorem \ref{isoadm} to isotropical linear spaces that are not isotropically realizable, or to non-realizable tropical Wick vectors.

\section{Acknowledgements}

I am grateful to Federico Ardila and Mauricio Velasco for fruitful discussions that got me started in this project. I am also indebted to Bernd Sturmfels for many helpful comments and suggestions, and for supporting me as a Graduate Student Researcher through the U.S. National Science Foundation (DMS-0456960 and DMS-0757207).

\bibliographystyle{amsalpha}
\bibliography{bibisotropical}

\end{document}